\theoremstyle{plain}
\newtheorem{thm}{Theorem}[section]
\newtheorem{lem}[thm]{Lemma}
\newtheorem{prop}[thm]{Proposition}
\newtheorem{cor}[thm]{Corollary}
\theoremstyle{definition}
\newtheorem{dfn}[thm]{Definition}
\newtheorem{ex}[thm]{Example}
\theoremstyle{remark}
\newtheorem{rmk}[thm]{Remark}
\newcommand{\cB}{\mathcal{B}}
\newcommand{\cE}{\mathcal{E}}
\newcommand{\cO}{\mathcal{O}}
\newcommand{\cV}{\mathcal{V}}
\newcommand{\bD}{\mathbf{D}}
\newcommand{\frakl}{\mathfrak{l}}
\newcommand{\fraks}{\mathfrak{s}}
\DeclareMathOperator{\uhp}{\mathcal{H}}
\DeclareMathOperator{\Aut}{Aut}
\DeclareMathOperator{\Endo}{End}
\DeclareMathOperator{\im}{im}
\DeclareMathOperator{\GL}{GL}
\DeclareMathOperator{\SL}{SL}
\newcommand*{\df}{\mathrel{\vcenter{\baselineskip0.5ex \lineskiplimit0pt
                     \hbox{\scriptsize.}\hbox{\scriptsize.}}} =}
\providecommand{\twomat}[4]{\left(\begin{matrix}#1&#2\\#3&#4\end{matrix}\right)}
\providecommand{\stwomat}[4]{\left(\begin{smallmatrix}#1&#2\\#3&#4\end{smallmatrix}\right)}
\providecommand{\pseries}[2]{#1[\![ #2 ]\!]}
\newcommand{\CC}{\mathbf{C}}
\newcommand{\ZZ}{\mathbf{Z}}
\newcommand{\RR}{\mathbf{R}}
\renewcommand{\Im}{\operatorname{Im}}
\DeclareMathOperator{\Id}{Id}
\newcommand{\bone}{\mathbf{1}}
\DeclareMathOperator{\obackslash}{\backslash\hspace{-3pt}\backslash}
\DeclareMathOperator{\sl2}{\fraks\frakl_2}
\begin{document}
%Title
\title[VOA bundles on modular curves]{Vertex operator algebra bundles on modular curves and their associated modular forms}
\author[Barake, Chuchman, Franc, Mason, Nasserden]{Daniel Barake, Owen Chuchman, Cameron Franc, Geoffrey Mason and Brett Nasserden}
\date{}

\begin{abstract}
This paper describes the vector bundle on the elliptic modular curve that is associated to a vertex operator algebra $V$ (VOA) or more generally a quasi-vertex operator algebra (QVOA), with a view towards future applications aimed at studying the characters of VOAs.\ We explain how the modes of sections of $V$ give rise naturally to $V$-valued quasi-modular forms.\ The space $Q(V)$ of $V$-valued quasi-modular forms is endowed with the structure of a doubled QVOA, and in particular the algebra $Q$ of quasi-modular forms is itself a doubled QVOA. $Q(V)$ also admits a natural derivative operator arising from the connection on the bundle defined by $V$ and the modular derivative, which we call the raising operator.\ We introduce an associated lowering operator $\Lambda$ on $Q(V)$ having the property that the $V$-valued modular forms $M(V)\subseteq Q(V)$ are the kernel of $\Lambda$. This extends the classical theory of scalar-valued quasi-modular forms.\ We exhibit an explicit isomorphism of $M(V)$ with $M \otimes V$. Finally, the coordinate invariance of vertex operators implies that $M(V)$ has a natural Hecke theory, and we use this isomorphism to fully describe the Hecke eigensystems: they are the same as the systems of eigenvalues that arise from scalar-valued quasi-modular forms.
\end{abstract}
\maketitle
\tableofcontents

\section{Introduction}\label{ss:notation}
Vertex operators arose initially in physics as an algebraic device for aiding computations in conformal field theory and related fields such as statistical mechanics and string theory. See \cite{CFTBook} for references to the physics literature. From the very start, surprising connections with modular forms were a part of these investigations. Later, mathematicians began to formalize and expand  the nascent theory of vertex operator algebras or VOAs, again mainly from an algebraic viewpoint \cite{FHL}.\ The main drivers of these investigations were to explain monstrous moonshine, and to explain a number of interesting phenomenon in the study of representations of some infinite dimensional Lie algebras. Inevitably, modular forms and quasi-modular forms often made contact with these investigations in surprising ways \cite{KacPetersen}, \cite{FLM}.

Since these initial investigations, some of the modular mystery has been swept away, largely due to the geometric interpretation of VOAs and the theories that inspired their study. Indeed, in \cite{FBZ}, one finds a clear discussion of how the data of a VOA is the same as the data of a certain vector bundle --- generally of infinite rank --- on any smooth curve, or even better, on moduli spaces of curves. These bundles are endowed with connections that interact with the vertex operations in interesting ways. Since modular forms are sections of vector bundles on the moduli space of elliptic curves, this geometric viewpoint makes for a potent and convenient way to understand the appearance of modular forms in the theory of VOAs.

Inspired by earlier $p$-adic work on VOAs \cite{FMpadic}, we began to investigate in what sense VOAs might also give rise to vector bundles on arithmetic curves such as, for example, the Igusa tower or other limits of Shimura curves. It became clear that it would be beneficial to have a detailed understanding of the geometric theory of VOAs on modular curves at finite level. The present paper constitutes our attempt to make this theory precise and accessible to readers already familiar with the theory of modular forms. More precisely, we give below a clear construction of the vector bundle $\cV/X(1)$ on the modular curve $X(1)$ of elliptic curves of level one attached to a VOA or, more generally, a quasi-VOA (QVOA) $V$, using an approach based on uniformization as presented, say, in the lectures notes \cite{Hain}. This approach allows us to describe the sections of $\cV$ in a clear way. These sections are holomorphic functions on the complex upper-half plane $\uhp$
\[
f\colon \uhp \to V
\]
taking values in a finite-dimensional subspace of $V$ and which satisfy a modular transformation law. Such functions, of varying weights, together describe a module $M(V)$ of $V$-valued modular forms. See Section \ref{s:VOAmodular} for precise definitions of $V$-valued modular forms and the space $M(V)$. 

Before turning to a discussion of our results, let us remark upon why we consider quasi-VOAs as a fundamental object of study below, as opposed to VOAs. The first basic point is that we are working on the moduli space of elliptic curves rather than the general genus case, so only the $\mathfrak{s}\mathfrak{l}_2$ part of the Virasoro action on $V$ really plays a r\^{o}le in our constructions. This observation has appeared in various guises in the literature, see for example \cite[Page 20]{FHL}. The second basic point is that the space $Q(V)$ of $V$-valued quasimodular forms that we introduce below, only has the structure of a quasi-VOA, and not a full-blown VOA. Thus, in this subject quasi-VOAs are unavoidable and a natural object of study.

In this paper we give a rather complete description of $M(V)$ and various related structures. To describe our results, it turns out to be natural and convenient to introduce a more general space of quasi-modular $V$-valued forms $Q(V)$, cf. Section \ref{ss:qmfdefinition}.\ The reason for introducing $V$-valued quasi-modular forms is simple:\ the space $M(V)$ is not closed under the natural pointwise bilinear products that $M(V)$ inherits from $V$.\ This assertion is precisely the VOA analog of the statement that 
the algebra of modular forms $M$ is not closed with respect to the operator 
$\theta=qd/dq$ and that one should expand to the algebra of quasi-modular forms $Q$ 
to get a good $\theta$-invariant space of functions. Although we introduce $Q(V)$ as a space of functions satisfying certain quasi-modular transformation laws, we prove in Theorem \ref{p:QMVtensor} that there is a natural graded isomorphism:
\begin{equation}
\label{eq:tensor}
Q(V) \cong Q\otimes_{\CC} V^{(2)},    
\end{equation}
where $V^{(2)}$ denotes the same underlying vector space as $V$, but with the grading doubled. This space $Q(V)$ is then closed under the natural bilinear products that $Q(V)$ inherits from $V$. Moreover, we explain in Section \ref{s:QVOA} how $Q(V)$ inherits a natural structure of a (doubled) quasi-vertex operator algebra from the identification of equation \eqref{eq:tensor}.\ This quasi-VOA structure includes, in particular, raising and lowering operators defining a representation of the Lie algebra $\mathfrak{sl}_2$.\ 

It is an irony that here we have quasi-modular forms and vertex algebras yoked together in a setting in which Rankin-Cohen (RC) brackets  appear to play no r\^{o}le, whereas it has long been anticipated \cite{ZagierIndian} that RC brackets of (quasi)modular forms and vertex algebras should be connected.\ One might approach this by appropriately performing a deformation quantization of $Q$ or $Q(V)$ considered as a QVOA in order to realize such a connection.\ Such things have been well-studied in the literature on modular forms e.g., \cite{Choieetal},\cite{ConnesMovsati}  nor  are they entirely unknown in the VOA literature \cite{EtingofKazhdan}, \cite{Lih-adic}, and we hope to return to this subject in the future.

Our first main result characterizes the containment of $V$-valued modular forms $M(V) \subseteq Q(V)$ inside the space of $V$-valued quasi-modular forms using the apparatus described so far. Recall that every quasi-modular form can be realized as a polynomial in the Eisenstein series $E_2,E_4,E_6$, (see \cite{RoyerQuasi}) making $\tfrac{\partial}{\partial E_2}$ a natural weight lowering linear operator on $Q$. On the other hand, $L(1)$ is a natural weight lowering operator on $V$. Combining these lowering operators gives:  
\begin{thm}
    \label{mainthm1}
    The space of modular forms $M(V)$ is the kernel of the lowering operator 
    \[
    \Lambda \df \frac{12}{2\pi i } \frac{\partial}{\partial E_2}\otimes 1 + 1\otimes L(1)
    \]
    acting on $Q(V)$.
\end{thm}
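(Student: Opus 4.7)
The plan is to identify $\Lambda$ as the infinitesimal generator of the cocycle that measures the failure of a quasi-modular form in $Q(V)$ to transform like a modular form under $\SL_2(\ZZ)$, so that $M(V)$ is cut out inside $Q(V)$ by the vanishing of $\Lambda$.

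First I would establish an explicit $\SL_2(\ZZ)$-transformation law on $Q(V)$ by combining its two sources. On the scalar factor, the identity
$$E_2(\gamma\tau) = (c\tau+d)^2 E_2(\tau) + \tfrac{12c(c\tau+d)}{2\pi i}$$
together with the modularity of $E_4, E_6$ and Taylor's theorem yields, for a pure-weight-$k$ quasi-modular form $g$,
$$g(\gamma\tau) = (c\tau+d)^k \exp\!\left(\tfrac{12c}{2\pi i(c\tau+d)}\tfrac{\partial}{\partial E_2}\right) g(\tau).$$
On the VOA factor, the uniformization-based description of $\cV$ in Section~\ref{s:VOAmodular} shows that the analogous discrepancy is an $L(1)$-twist of the form
$$\exp\!\left(\tfrac{c}{c\tau+d}L(1)\right),$$
coming from the $\SL_2$-part of the Virasoro action that implements the change of local coordinate at the marked point under $\gamma$. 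Pulling back to $Q(V) \cong Q\otimes V^{(2)}$ via Theorem~\ref{p:QMVtensor} and using that $\tfrac{\partial}{\partial E_2}\otimes 1$ and $1\otimes L(1)$ act on disjoint tensor factors and hence commute, the two exponentials combine into $\exp(\tfrac{c}{c\tau+d}\Lambda)$. Thus for $f \in Q(V)$ of pure weight $k$,
$$f(\gamma\tau) = (c\tau+d)^k \exp\!\left(\tfrac{c}{c\tau+d}\Lambda\right) f(\tau).$$

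Second, I would compare with the modular transformation law defining $M(V)$, which is the above formula with the exponential replaced by the identity. Thus $f \in M(V)$ iff $\exp(\tfrac{c}{c\tau+d}\Lambda)f = f$ for every $\gamma$. Since $L(1)$ is locally nilpotent on $V$ and $\tfrac{\partial}{\partial E_2}$ strictly drops $E_2$-degree, $\Lambda$ is locally nilpotent on $Q(V)$ and the exponential reduces to a finite sum in each weight. The cocycle parameter $c/(c\tau+d)$ takes infinitely many distinct values as $\gamma$ varies (already in the family $\gamma_t = \stwomat{1}{0}{t}{1}$ with $t \in \ZZ$), so $\exp(s\Lambda)f = f$ for infinitely many $s$ forces $\Lambda f = 0$; the converse is immediate.

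The main obstacle is the first step, namely extracting from the construction in Section~\ref{s:VOAmodular} the precise form of the transformation law on $Q(V)$, especially the $L(1)$-twist on the VOA side together with its normalization. Once this is in hand the rest is formal. An alternative route that sidesteps the direct cocycle analysis is to compute $\ker\Lambda$ inside $Q \otimes V^{(2)}$ using $Q = M[E_2]$: writing $f = \sum_{n\geq 0} E_2^n h_n$ with $h_n \in M\otimes V^{(2)}$, the equation $\Lambda f = 0$ becomes the recursion $h_{n+1} = -\tfrac{2\pi i}{12(n+1)}(1\otimes L(1))h_n$, so $\ker\Lambda = \exp\!\left(-\tfrac{2\pi i}{12}E_2 \otimes L(1)\right)(M\otimes V^{(2)})$; one then verifies that this subspace coincides with $M(V)$, which also recovers the isomorphism $M(V) \cong M\otimes V^{(2)}$ announced in the introduction.
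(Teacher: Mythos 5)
Your proposal follows essentially the same route as the paper: the paper's Theorem \ref{thmXLambda} is exactly your first step, establishing $f||_k\gamma(\tau) = \exp(X\Lambda)f(\tau)$ for $f\in Q_k(V)$ with $X = c/(c\tau+d)$, and Corollary \ref{cokerL} is your second step (the paper deduces $\Lambda f = 0$ from the fact that $\Lambda$ strictly lowers weight rather than from varying $\gamma$ over infinitely many values, but both arguments work). Two caveats. First, the transformation law as you write it, $f(\gamma\tau) = (c\tau+d)^k\exp\bigl(X\Lambda\bigr)f(\tau)$, is not correct as stated: the exponential measures the failure of the \emph{double-slash} invariance $f||_k\gamma = f$, so the left-hand side must carry the full automorphy factor $j(\gamma,\tau)^kK(\gamma,\tau)$ with $K(\gamma,\tau) = e^{-cj(\gamma,\tau)L(1)}j(\gamma,\tau)^{-2L(0)}$; the $L(1)$ that survives into $\Lambda$ appears only after conjugation by $j(\gamma,\tau)^{-2L(0)}$ converts $e^{cj(\gamma,\tau)L(1)}$ into $e^{XL(1)}$ (because $L(1)^r$ drops conformal degree by $r$). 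This is precisely the ``normalization'' you flag as the main obstacle, and it is the substantive content of the proof: the paper carries it out by writing $f = \sum_m E_2^m\sum_\ell f_{m,\ell}v_{m,\ell}$ with $f_{m,\ell}\in M_{2\ell}$, $v_{m,\ell}\in V_{n-m-\ell}$, and expanding both sides of \eqref{Lambdaact} explicitly. Second, your proposed alternative route (solving the recursion $h_{n+1} = -\tfrac{2\pi i}{12(n+1)}(1\otimes L(1))h_n$ to get $\ker\Lambda = P(M\otimes V^{(2)})$) computes $\ker\Lambda$ inside $Q\otimes V^{(2)}$ but does not by itself identify that subspace with $M(V)$; that identification still requires the transformation law, so it is not an independent proof of the theorem --- it is essentially the paper's Theorem \ref{thmPiso}, which is derived \emph{from} Corollary \ref{cokerL} rather than the other way around.
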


Theorem \ref{mainthm1} is proved as a consequence, cf.\ Corollary \ref{cokerL}, of Theorem \ref{thmXLambda} which describes the modular transformation behavior of elements of $Q(V)$.\ There are several perspectives that one can take with Theorem \ref{mainthm1}.\ First, on the modular side, it is a natural analog of a result in the classical theory of modular forms, namely that the subspace $M = \CC[E_4,E_6]$ of modular forms inside the space $Q= \CC[E_2,E_4,E_6]$ of quasi-modular forms is the kernel of the lowering operator $\partial/\partial E_2$. While this scalar-valued result is obvious once one knows the descriptions of $M$ and $Q$ in terms of Eisenstein series, Theorem \ref{mainthm1} requires more work.\ On the other hand it is well-known that the \textit{quasi-primary} states $QP(V)$ of a VOA $V$ are in some sense the most tractable (cf.\ \cite{FHL}, Section 2.6), and for VOAs $V$ we have (by definition) $QP(V)=\ker L(1)$.\ Thus a VOA-centric take on Theorem \ref{mainthm1} is that $M(V)$ may be viewed as the space of quasiprimary states in $Q(V)$.

It turns out that one can give a very concrete description of $M(V)$ as a free-module of infinite rank over $M$. This is our second main result:
\begin{thm}
    \label{mainthm2}
    There exists an explicit $M$-linear isomorphism of graded spaces:
    \[
    P\colon  M\otimes_{\CC} V^{(2)} \cong M(V)
    \]
\end{thm}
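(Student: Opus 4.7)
The plan is to combine Theorem \ref{mainthm1} and Theorem \ref{p:QMVtensor} to realize $M(V)$ as the kernel of a concrete first-order linear operator on $M[E_2]\otimes_{\CC} V^{(2)}$, and then to exhibit the isomorphism $P$ explicitly as an exponential of $-\tfrac{2\pi i}{12}E_2\, L(1)$.

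First I would invoke Theorem \ref{p:QMVtensor} to identify $Q(V)\cong Q\otimes_{\CC} V^{(2)}$, and then use the elementary description $Q=M[E_2]$ (Royer's theorem \cite{RoyerQuasi}) to rewrite this as $M[E_2]\otimes_{\CC} V^{(2)}$. Under this identification the operator
\[
\Lambda = \tfrac{12}{2\pi i}\,\tfrac{\partial}{\partial E_2}\otimes 1 + 1\otimes L(1)
\]
of Theorem \ref{mainthm1} is a first-order linear operator in the variable $E_2$ with constant (in $E_2$) operator-valued coefficient $L(1)$ on $V^{(2)}$, so finding a description of $\ker\Lambda = M(V)$ is reduced to solving an operator-valued ODE in the single variable $E_2$.

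Next I would define
\[
P(g) \df \exp\!\left(-\tfrac{2\pi i}{12}\, E_2\, L(1)\right)g, \qquad g \in M\otimes_{\CC} V^{(2)},
\]
and check it has the required properties. The exponential makes sense on each pure tensor because $L(1)$ strictly lowers the conformal weight on $V$ (and hence on $V^{(2)}$), and the weight spaces are finite-dimensional and bounded below; thus $L(1)$ is nilpotent on each weight space and the series terminates, landing in $M[E_2]\otimes V^{(2)}$. A direct differentiation gives $\partial_{E_2}P(g) = -\tfrac{2\pi i}{12} L(1)P(g)$, so the two terms of $\Lambda$ cancel on $P(g)$, proving $P(g)\in\ker\Lambda = M(V)$ by Theorem \ref{mainthm1}. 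Since $E_2$ and $L(1)$ act on independent tensor factors and elements of $M$ commute with $\partial/\partial E_2$, the map $P$ is $M$-linear; the doubled grading on $V^{(2)}$ is chosen precisely so that the weight of $E_2$ in $Q$ and the $L(1)$-lowering on $V^{(2)}$ match, which makes $P$ grading-preserving.

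Finally I would prove bijectivity by writing down the explicit two-sided inverse $R(f)\df\exp\!\bigl(\tfrac{2\pi i}{12}\, E_2\, L(1)\bigr)f$ for $f\in\ker\Lambda$. The analogous differentiation, now using $\Lambda f=0$, shows $\partial_{E_2}R(f)=0$, whence $R(f)\in M\otimes V^{(2)}$; and $P\circ R = R\circ P = \id$ follows from the commutativity of $L(1)$ with itself. The main (mild) obstacle is the bookkeeping:\ verifying that the exponential series does not escape $M[E_2]\otimes V^{(2)}$ into some completion, and that the doubled weight grading on $V^{(2)}$ lines up correctly with the weight of $E_2$ so that $P$ is truly graded of degree $0$; both are handled by the local nilpotency of $L(1)$ on $V^{(2)}$ and the $2{:}1$ doubling convention.
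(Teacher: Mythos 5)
Your proposal is correct and follows essentially the same route as the paper: Section \ref{SSP} defines exactly the same map $P=\exp\left(-\tfrac{2\pi i}{12}E_2\otimes L(1)\right)$ and identifies $M(V)$ with $P(M\otimes V^{(2)})$ by means of the characterization $M(V)=\ker\Lambda$ from Corollary \ref{cokerL}. The only cosmetic difference is that you package the key computation as the intertwining identity $P^{-1}\Lambda P=\tfrac{12}{2\pi i}\,\partial_{E_2}$ (an integrating-factor argument for the first-order equation $\Lambda f=0$), whereas the paper extracts the same information by expanding $f\in\ker\Lambda$ in powers of $E_2$ and solving the resulting coefficient recursion; your version also makes transparent that only the local nilpotency of $L(1)$ (from the lower truncation of the grading) is needed, rather than the CFT-type hypothesis under which the paper states its Theorem \ref{thmPiso}.
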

A subtlety is that the isomorphism $P^{-1}$ arising from Theorem \ref{mainthm2} is \emph{not} the restriction of the pointwise isomorphism of equation \eqref{eq:tensor} to the subspace $M(V) \subseteq Q(V)$. See Section \ref{SSP} for the definition of $P$ and a proof of Theorem \ref{mainthm2}.

Taken together, the isomorphism of equation \eqref{eq:tensor}, and Theorems \ref{mainthm1} and \ref{mainthm2} yield a detailed understanding of the space $M(V)$. Since the sections of a bundle characterize the bundle itself, this in turns provides a concrete, automorphic, description of the bundle $\cV/X(1)$ over the moduli space of elliptic curves that is associated to a QVOA or VOA $V$ as in, say, \cite{FBZ}.

Let us now outline the structure of this paper. In Section \ref{s:modforms} we recall the geometric definition of modular forms. Section \ref{s:VOAmodular} then describes the construction of the bundle $\cV/X(1)$ from \cite{FBZ} applied to a QVOA on the moduli space of elliptic curves following the orbifold approach of \cite{Hain}. Section \ref{s:connection} follows by introducing the connection on the bundle $\cV/X(1)$. This is a raising operator in the modular theory, and we provide detailed first-principle proofs to help ease the uninitiated into the subject. Then in Section \ref{s:qmf} we introduce quasi-modular forms and give some motivation for their consideration. In the closely related Section \ref{s:QVOA} we discuss the general algebraic structure of $Q(V)$ as a QVOA. Then in Section \ref{s:operators} we introduce the operators needed for proving our main Theorems, and we discuss some examples, dimension computations, and describe how the connection behaves when one applies the trivialization $P$ to the space $M(V)$. Finally, in Section \ref{s:hecke} we discuss certain natural Hecke operators on $M(V)$ that arise from the coordinate-invariance of the QVOA-bundle construction. We use our isomorphism $P$ from Theorem \ref{mainthm2} to show that the Hecke eigensystems that arise from $M(V)$ and $Q(V)$ are the same as those that arise from $M$ and $Q$, respectively, with appropriate multiplicities. 

We also include two appendices. The first, Appendix \ref{AppA}, summarizes some basic results on quasi-modular forms. It also include a discussion of Hecke operators on quasi-modular forms and explains how the Hecke eigensystems that one finds in $Q$ can be obtained easily from the Hecke eigensystems of modular forms of level one by differentiation. This result is known to experts, but we could not find a convenient reference and we include a proof for completeness. This facilitates the  determination of all Hecke eigenforms in the spaces $M(V)$ and $Q(V)$ from the Hecke eigenforms in $M$. Our second and final Appendix \ref{AppB} summarizes some background on vertex algebras, in particular highlighting some of the important background on quasi-VOAs.

\subsection{Notation}\label{SNotation} We list some commonly occurring notation and terminology here.
\subsubsection{General notation}
\begin{itemize}
    \item[---] $\GL_2(\RR)^+=$ group of real $2\times 2$ matrices having positive determinant.
    \item[---] $\Gamma\df\SL_2(\ZZ)$.
    \item[---] $S\df\stwomat{0}{-1}{1}{0}$ and $T\df \stwomat 1101$ are the standard generators of $\Gamma$.
    \item[---] $\mathfrak{sl}_2$ denotes the Lie algebra of traceless $2\times 2$ complex matrices.
    \item[---] $\uhp \df \{\tau\in\CC\mid \Im\tau>0\}$ is the complex upper half-plane.
    \item[---] $q\df e^{2\pi i\tau}$.
\end{itemize}

%%%%%%%%%%%%%%%%%%%%%%%%%%
\subsubsection{Vertex algebras}
For further background see Appendix \ref{AppB}.\ $V=(V, Y, \mathbf{1})$ denotes a $\ZZ$-graded vertex algebra (Fock space); its decomposition into homogeneous pieces is denoted $V=\oplus _{n\in\ZZ} V_n$.\  If $V$ is of CFT-type we write this as
\[
V=\CC\bone \oplus V_1\oplus V_2\oplus \hdots
\]
\begin{itemize}
    \item[---]Elements $v\in V$ are called \emph{states} and elements $v\in V_n$ are the states of  \emph{conformal degree} $n$.
    \item[---] The vertex operator for $v \in V$ is $Y(v, z)\df \sum_{n\in\ZZ} v(n)z^{-n-1}$ and $v(n)\in \Endo(V)$ is the $n^{th}$ \emph{mode} of $v$.
    \item[---] The \emph{doubled} vertex algebra $V^{(2)}$ has the same Fock space $V$ but its grading is doubled: $V^{(2)}=\oplus_{n\in\ZZ} V^{(2)}_{2n}$, where $V^{(2)}_{2n}\df V_n$.
    \item[---] $F_NV\df \oplus_{n\leq N} V_n$ denotes the $N^{th}$ truncation of $V$.
    \item[---] If $V$ is a quasi-vertex algebra, $L(-1)$, $L(1)$, $L(0)$ denote the raising, lowering and Euler operators, respectively, for the canonical $\mathfrak{sl}_2$-action on $V$.
\end{itemize} 

%%%%%%%%%%%%%%%%%%%%%
\subsubsection{Cocyles and actions}
For $\gamma=\stwomat {a}{b}{c}{d} \in \GL_2(\RR)^+$,
\begin{itemize}
    \item[---] $j(\gamma, \tau)\df c\tau+d$.
    \item[---] $X=X(\gamma)=X(\gamma, \tau)\df\frac{c}{c\tau+d}$.
    \item[---] The \emph{canonical} $1$-cocycle for the action of $GL_2(\RR)^+$ on the trivial quasi-vertex operator algebra bundle $\uhp\times V$ (taking values in $\Endo(V)$) is
\[ K(\gamma,\tau)\df\exp\left(-c\det\gamma^{-1} j(\gamma,\tau)L(1)\right)j(\gamma,\tau)^{-2L(0)}(\det\gamma)^{L(0)}.\]
\end{itemize}
For meromorphic functions $f:\uhp \rightarrow U$ and integers $k$, where $U$ is a finite-dimensional complex vector space,
\begin{itemize}
    \item[---] $f|_k\gamma(\tau)\df j(\gamma, \tau)^{-k}(\det\gamma)^{k/2}f(\gamma\tau)$.
\item[---] if $U$ is contained in a quasi-vertex operator algebra, then \[f||_k \gamma(\tau)\df j(\gamma, \tau)^{-k}K(\gamma, \tau)^{-1}f(\gamma\tau).\]
\end{itemize}
%%%%%%%%%%%%%%%%%%%%%%
\subsubsection{Modular forms}
\begin{itemize}
\item[---] Bernoulli numbers $B_k$ are defined by the generating series
$\tfrac{t}{e^t-1}=:\sum_{k\geq0} \tfrac{B_k}{k!}t^k.$
    \item[---] For integers $k\geq 1$, $E_{2k}\df  1-\frac{4k}{B_{2k}}\sum_{n\geq1} \sigma_{2k-1}(n)q^n$, is the weight $2k$ Eisenstein series for $\Gamma$ with constant term $1$.
    \item[---] In particular,
$E_2(\tau) = 1-24\sum_{n\geq1} \sigma_1(n)q^n$ is a quasi-modular form.
\item[---] $M=\CC[E_4, E_6]=\oplus_k M_{2k}$ is the graded algebra of holomorphic modular forms on $\Gamma$.
\item[---] $Q=\CC[E_2, E_4, E_6]=\oplus_k Q_{2k}$ is the graded algebra of holomorphic quasi-modular forms on $\Gamma$ (cf. Appendix \ref{AppA}).
\item[---] The half-graded algebra of quasi-modular forms is $Q^{(1/2)} = \oplus_{k\geq0} Q^{(1/2)}_k$ where $Q^{(1/2)}_k\df Q_{2k}$.
\end{itemize}

\iffalse
For a meromorphic function $f:\mathcal{H}\rightarrow F_NV$ and an integer $k$, where $V$ is a quasi-vertex operator algebra,
\begin{itemize}
    \item[---] $M(V)=\oplus_{k\geq0} M_{2k}(V)$ where $M_{k}(V)$ is the space of $V$-valued holomorphic modular forms of weight $k$.
    \item[---] $Q(V)=\oplus_{k\geq0} Q_{2k}(V)$ where $Q_{k}(V)$ is the space of $V$-valued holomorphic quasi-modular forms of weight $k$.
\end{itemize}
\fi

%%%%%%%%%%%%%%%%%%%%%%%%%
\section{Modular forms}
\label{s:modforms}
%%%%%%%%%%%%%%%%%%%%%%%%%
For the basics of vector bundles on the orbifold defined by the moduli space of elliptic curves, see \cite{Hain}.

\subsection{Geometric definition of modular forms}
Let $Y(1)$ be the (open) modular curve of level one associated to  $\Gamma$, and let $p\colon \cE \to Y(1)$ be the universal elliptic curve over $Y(1)$. We recall how this is defined:\ define left and right actions of $\Gamma$ and $\ZZ^2$ on $\uhp\times \CC$ as follows:
\begin{align*}
    \stwomat abcd (\tau,z) &= (\tfrac{a\tau+b}{c\tau+d},(c\tau+d)^{-1}z),\\
    (\tau,z) \cdot (m,n) &= (\tau, z+m\tau + n).
\end{align*}

The quotient $E\df (\uhp\times \CC) / \ZZ^2$ is a family of elliptic curves living over $\uhp$ by projection onto the first factor. The fiber over $\tau \in \uhp$ is $E_\tau \df \CC/(\ZZ\tau \oplus\ZZ)$. The double quotient
\[
  \cE = \Gamma \backslash E
\]
is then the universal elliptic curve over $Y(1) = \Gamma \obackslash \uhp$.
\begin{dfn}
Set $\omega_{\cE/Y(1)} = p_*\Omega^1_{\cE/Y(1)}$, an invertible sheaf on $Y(1)$.
\end{dfn}
The importance of this definition is that, under a uniformization, global sections of $\omega_{\cE/Y(1)}$ are holomorphic modular forms of weight one on the upper half plane, with no conditions imposed at cusps. To make this precise, note that the sheaf $\Omega^1_{\cE/Y(1)}$ can be described concretely by working on the cover $\uhp\times \CC /\uhp$ and describing $\Omega^1_{\uhp\times \CC/\uhp}$: sections of $\Omega^1_{\uhp\times \CC}$ are nothing but differentials $\alpha d\tau + \beta dz$ where $\alpha,\beta$ are holomorphic functions on $\uhp \times \CC$. Under this description, then, $\Omega^1_{\uhp\times \CC/\uhp}$ is the line bundle on $\uhp \times \CC$ spanned by $dz$. That is,
\[
\Omega^1_{\uhp \times \CC/\uhp}(\uhp\times \CC) = \{\alpha(\tau,z) dz \mid \alpha \textrm{ is holomorphic on } \uhp \times \CC\}.
\]
Then, since $E$ is the quotient of $\uhp \times \CC$ by $\ZZ^2$, we can obtain a description of the global sections of $\Omega^1_{E/\uhp}$ by taking invariants under this action:
\begin{align*}
    \Omega^1_{E/\uhp}(E) &=\Omega^1_{\uhp \times \CC/\uhp}(\uhp\times \CC)^{\ZZ^2}\\
    &=\{\alpha(z,\tau) dz \mid \alpha(\tau,z+m\tau+n) = \alpha(\tau,z) \textrm{ for all } m,n \in \ZZ\}.
\end{align*}
This is one of the axioms for a Jacobi form.

\begin{rmk}
    If we were working on $\Omega^1_{\uhp \times \CC}$, then $\ZZ^2$ would not act trivially on $dz$: we would have $dz \cdot (m,n) = dz + md\tau$, but when working in the relative differentials $\Omega^1_{\uhp \times \CC/\uhp}$, the $d\tau$ term vanishes.
\end{rmk}

We can continue this analysis by taking the quotient on the left by the modular group $\Gamma$. Invariance under this action amounts to the functional transformation law:
\[
  \alpha\left(\frac{a\tau+b}{c\tau+d}, (c\tau+d)^{-1}z\right) \frac{dz}{c\tau+d} = \alpha(\tau,z)dz
\]
That is, $\alpha(\tau,z)$ is a weak Jacobi form of weight $1$ and index $0$, where weak here means that we say nothing about its Fourier development:
\[
\Omega^1_{\cE/Y(1)}(\cE) = \{\alpha(\tau,z)dz \mid \alpha \textrm{ is a weak Jacobi form of weight } 1 \textrm{ and index } 0.\}
\]

Now, the projection map $p\colon \cE \to Y(1)$ is described concretely in terms of our uniformization as $p(\tau,z) = \tau$. It remains to identify the space
\[
\omega_{\cE/Y(1)}(Y(1)) = (p_*\Omega^1_{\cE/Y(1)})(Y(1))
\]
with the space of weakly holomorphic modular forms of weight $1$. By definition, we have $f \in \omega_{\cE/Y(1)}(Y(1))$ if and only if $f\circ p \in \Omega^1_{\cE/Y(1)}(Y(1))$. Assume first that $f\circ p \in \Omega^1_{\cE/Y(1)}(Y(1))$, so that
\begin{align*}
  f\left(\frac{a\tau+b}{c\tau+d}\right) &= (f\circ p)\left(\frac{a\tau+b}{c\tau+d},(c\tau+d)^{-1}z\right) \\
  &=(c\tau+d)(f\circ p)\left(\tau,z\right)\\
  &=(c\tau+d)f(\tau),
\end{align*}
where we have used the Jacobi form transformation law satisfied by $f\circ p$. Therefore every section $f \in \omega_{\cE/Y(1)}(Y(1))$ can be interpreted as a modular form of weight one in this way.

Suppose conversely that $f$ is a (weakly holomorphic) modular form of weight one. Then $f\circ p$ is independent of $z$, and it is clear that we obtain a Jacobi form $f\circ p$ of weight $1$ and index $0$ from this constancy in $z$ and the transformation law for $f$. Thus, we have indeed verified that global sections of $\omega_{\cE/Y(1)}$ can be identified with (weakly holomorphic) modular forms of weight one. Modular forms of weight $k$ are then sections of $\omega_{\cE/Y(1)}^k$.

%%%%%%%%%%%%%%%%%%%%%
\subsection{The compactified modular curve}
\label{ss:cusps}
Let now $X(1)$ denote the compact modular curve of level one with the cusp added. Let $\bD$ denote the unit disc, and let $\bD^\times$ denote the punctured unit disc. Endow both of these spaces with the trivial action by the cyclic group $C_2$ of order $2$. Then the compactified modular curve $X(1)$ is defined concretely, as an orbifold, by glueing $Y(1)$ and $C_2\obackslash \bD$ under the diagram
\[\xymatrix{
& \langle -1,T\rangle \obackslash \uhp \ar[dl]_{\iota_1}\ar[r]^{\tau \mapsto q} & C_2\obackslash \bD^\times \ar[dr]^{\iota_2}&\\
\Gamma\obackslash \uhp&&&C_2\obackslash \bD
}\]
where the left diagonal map is the natural covering and the rightmost diagonal map is the natural inclusion.

\medskip
In this way, a vector bundle $\cV$ on $X(1)$ can be described concretely as a triple $\cV = (\cV_1,\cV_2,\phi)$ where $\cV_1$ is a vector bundle on $Y(1)$, and $\cV_2$ is a vector bundle on $C_2\obackslash \bD$, and $\phi$ is a bundle isomorphism
\[
\phi \colon \iota_1^*\cV_1 \to \iota_2^*\cV_2.
\]
See \cite{Hain} for the case of line bundles, and Sections 2 and 3 of \cite{CandeloriFranc} for the higher rank case.

In our setting, we will have a QVOA $V$, or some finite-dimensional truncation of it, and bundles
\begin{align*}
\cV_1 &= \Gamma \obackslash (\uhp \times V),\\
\cV_2 &= C_2 \obackslash (\bD \times V)
\end{align*}
where this notation means that $\uhp\times V$ and $\bD\times V$ are endowed with actions lifting the actions of $\Gamma(1)$ and $C_2$ on $\uhp$ and $\bD$, respectively. Then
\begin{align*}
    \iota_1^*\cV_1 &= \langle -1, T\rangle \obackslash (\uhp \times V),\\
    \iota_2^*\cV_2 &= C_2 \obackslash (\bD^\times\times V)
\end{align*}
Thus, $\iota_1^*\cV_1$ is nothing but the trivial bundle $\uhp\times V$ endowed with the restriction of the action of $\Gamma(1)$ to $\langle -1, T\rangle$, and $\iota_2^*\cV_2$ is nothing but $\bD^\times \times V$ with the restricted action of $C_2$. Then $\phi$ is a map of the form
\[
\phi(\tau,v) = (q,\phi_0(\tau,v))
\]
that intertwines these two actions.

\medskip
It turns out that the action of $\Gamma$ on $V$ that intervenes below leads to a trivial action of $\langle -1,T\rangle$, and similarly a trivial action of $C_2$ on $V$. Therefore, there is a canonical way to glue these bundles, by taking $\phi_0(\tau,v) = v$. In this way one obtains a concrete description of QVOA bundles on compact modular curves. When one changes coordinates from $\uhp$ to the coordinates provided by the $q$-disc 
$\bD$, the square-bracket formalism of \cite{Zhu} (see also \cite{DLMmodular, MT}) arises naturally via Huang's change of coordinates formula, see Lemma 5.4.6 of \cite{FBZ}. Hence, in this sense, the bundle associated to $V$ on the noncompact curve $Y(1)$ is completed to a bundle on the compact curve $X(1)$ by gluing in $V$, under the square-bracket formalism, at the cusp.\ This viewpoint will not play a strong r\^{o}le in what follows, as we will define modular forms using a more ``trivial'' notion of extension to the cusp, via $q$-expansions. We will point out how this trivial notion of holomorphy at the cusp differs from the natural one arising from coordinate- invariance of the VOA bundle construction.

%%%%%%%%%%%%%%%%%%%%%%%%%%
\section{VOAs on modular curves}
%%%%%%%%%%%%%%%%%%%%%%%%%

\subsection{The main construction}
The goal here is to give a similarly concrete description of how a quasi-vertex operator algebra $V$ is described geometrically as a family of vector bundles on all modular curves. For simplicity we shall restrict to level one. This will allow us to give a simplified presentation that involves, in particular, a unique cusp.

Let $\cV_{Y(1)}$ denote the bundle associated to $V$ on $Y(1)$ as described in \cite{FBZ}. We wish to describe this as in the previous section, as a trivial bundle on $\uhp$ endowed with an action of $\Gamma$ such that
\[
\cV_{Y(1)} \df \Gamma \obackslash (\uhp \times V).
\]
That is, to begin with, we consider the trivial bundle $\uhp \times V$, but we must factor out the action of the modular group that arises by change of coordinates under linear fractional transformations. 

%%%%%%%%%%%%%%%%%
\subsubsection{The torsor of local coordinates} Following \cite{FBZ}, let $\cO$ denote a power series ring over $\CC$ in one variable, and let $\Aut_{\cO}$ denote its group of automorphisms. As a set we identify
\[
\Aut_{\cO} \df \left\{\sum_{n\geq 1} a_nX^n \mid  a_1\neq 0\right\},
\]
and multiplication is defined as $(f*g)(X) = g(f(X))$. 

We begin with the covering torsor
\[\Aut_{\uhp} \to \uhp\] 
of local coordinates. Let $z$ denote the usual variable on $\CC$, which is a local coordinate at $z=0$ in $\CC$. Restrict this global variable to $\uhp$. Notice that $z$ is not a local coordinate at any point of $\uhp$. However, at a point $\tau \in \uhp$, our global variable $z$ on $\uhp$ yields the local coordinate $X=z-\tau$ at $\tau$. With this notation we trivialize the torsor of local coordinates via the identification
\[
\Aut_{\uhp} = \uhp \times \Aut_{\cO}.
\]

We now describe an action of $\GL_2^+(\RR)$ on this bundle. Given 
\[f(z) = \sum_{n\geq 1} a_n(z-\tau)^n\] 
a local coordinate at $\tau$ in terms of the global variable $z$, then
\[
f(\gamma^{-1}z) = \sum_{n\geq 1} a_n(\gamma^{-1}z-\tau)^n
\]
is a local coordinate at $\gamma \tau$. This endows $\Aut_{\uhp}$ with a natural left action of $\GL_2^+(\RR)$ that lifts the action on $\uhp$:
\[
\gamma (\tau,f(z)) \df (\gamma \tau, f(\gamma^{-1} z)).
\]
This provides the extra data needed to descend to a bundle on $Y(1)$. 

\begin{rmk}
    The added flexibility of being able to change coordinates also by the matrices in $\GL_2^+(\RR)$ that are not in the modular group will be used below to define Hecke operators. 
\end{rmk}

To make this more concrete, we will identify all fibers of $\Aut_{\uhp}$ with one another by setting $X = z-\tau$. Now set
\[
\Aut_{Y(1)} \df \uhp \times \Aut_\cO.
\]
This bundle is endowed with the following left action of $\GL_2^+(\RR)$, which is the transport of structure of the action above under this reparameterization of the fibers:
\[
\gamma (\tau,f(X)) = (\gamma \tau, f(\gamma^{-1}(X+\gamma\tau)-\tau)).
\]

We can write the new local coordinate intervening in the action above as a power series as follows: in the fiber above $\gamma \tau$, we have the relation $X = z-\gamma \tau$, and so if $c\neq 0$ we have
\begin{align*}
\gamma^{-1}(X+\gamma \tau)-\tau &= \frac{d(X+\gamma \tau)-b}{-c(X+\gamma\tau)+a}-\tau\\
&=\frac{1}{\det\gamma}\frac{(c\tau+d)^2X}{1-\frac{c(c\tau+d)}{\det\gamma}X}\\
%&=\frac{(c\tau+d)^2}{\det\gamma}X\sum_{n\geq 0}\frac{c^n(c\tau+d)^n}{\det\gamma)^n}X^n\\
&=\sum_{n\geq 1} c^{n-1}(c\tau+d)^{n+1}\det\gamma^{-n}X^n
\end{align*}
Recalling \ref{ss:notation} the $1$-cocycle $j(\alpha,\tau)$ for $\alpha = \stwomat abcd \in \GL_2^+(\RR)$, one finds that
\begin{equation}
\label{eq:coordinateaction}
\gamma^{-1}(X+\gamma \tau)-\tau =\sum_{n\geq 1} c^{n-1}j(\gamma,\tau)^{n+1}\det\gamma^{-n}X^n.
\end{equation}

%%%%%%%%%%%%%%%%%%%%%%
\subsubsection{The VOA bundle}
Now consider the fiber product 
\[\cV_{\uhp} = (\uhp \times \Aut_\cO)\times_{\Aut_\cO} V,\] 
where $\Aut_{\cO}$ acts on $V$ as in \cite{FBZ}. Concretely,
\[
\cV_{\uhp} = \left\{(\tau,f(X),v) \in \uhp\times \Aut_{\cO}\times V\right\}/(\tau,f(X)*g(X),v) \sim (\tau,f(X),R(g)v).
\]
In particular, in $\cV_{\uhp}$ we always have 
\[(\tau,f(X),v) = (\tau,X,R(f)v)\]
where $R(f)$ denotes the left action of $f\in \Aut_{\cO}$ on the quasi-vertex operator algebra $V$.\ That means that every fiber of $\cV_{\uhp}$ over $\tau \in \uhp$ can be identified with $V$ using the representatives $(\tau,X,v)$ for $v \in V$. In this way we obtain a trivialization
\[
\cV_{\uhp} \cong \uhp \times V.
\]

\begin{rmk}
    Recall from \cite{FBZ} that all that is needed to define $R(f)$ are the Virasoro operators $L(n)$ for $n\geq 0$ acting on $V$.\ In fact, we will see below that in this simple case of changes of coordinates by linear fractional transformations, only $L(0)$ and $L(1)$ play a r\^{o}le, so that it is really an action on $V$ arising from a subalgebra of the canonical $\sl2$-action on $V$.\ Later, when we describe a connection on $\cV_{Y(1)}$, the operator $L(-1)$ will also play a r\^{o}le, leading to the full action of $\sl2$.\ This circumstance is the main reason that we take $V$ to be a QVOA rather than a VOA.
\end{rmk}

Even though $\cV_{\uhp}$ can be trivialized as above, its more complicated expression as a fiber product is essential for endowing $\cV_{\uhp}$ with an action of $\GL_2^+(\RR)$ that lifts the fractional linear transformation on $\uhp$ . Then, restricting this action to the modular group allows us to descend $\cV_{\uhp}$ to a bundle on $Y(1)$. In this way we arrive at the following definition:

\begin{dfn}\label{dfnHV}
    The bundle $\cV_{Y(1)}$ over the modular curve associated to the QVOA $V$ is defined as the quotient
    \[
    \cV_{Y(1)} \df \Gamma \obackslash (\uhp \times V)
    \]
    under the action
    \[
\gamma(\tau,v) \df (\gamma \tau, R(\gamma^{-1}(X+\gamma\tau)-\tau)v).
\]
\end{dfn}

In order to find descriptions for various operators $R(\rho)$ for $\rho(X) \in \Aut\cO$ that intervene below, we prepare with a basic well-known lemma.
\begin{lem}
\label{l:computeR}
    If $\rho(X) = \sum_{m\geq 1}\alpha^{m-1}X^m$, then
    \[
    \rho(X) = \exp\left(\alpha X^{2}\partial_X\right) X.
    \]
\end{lem}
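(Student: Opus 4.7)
The plan is to verify the identity by direct computation, using the power series definition of the exponential. Let $D \df X^{2}\partial_{X}$, so we must show $\exp(\alpha D)X = \rho(X) = \sum_{m\geq 1}\alpha^{m-1}X^{m}$.

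First I would establish by induction on $k$ that $D^{k}(X) = k!\,X^{k+1}$. The base case $k=0$ is clear, and more generally $D(X^{n}) = X^{2}\cdot nX^{n-1} = nX^{n+1}$, so if $D^{k}(X) = k!\,X^{k+1}$ then
\[
D^{k+1}(X) = D(k!\,X^{k+1}) = k!\,(k+1)X^{k+2} = (k+1)!\,X^{k+2},
\]
completing the induction.

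Next, I would expand the exponential as a formal power series in $\alpha$ (this is well-defined on the completed local ring $\CC[\![X]\!]$ because $D$ raises the $X$-adic order of each monomial), and substitute the formula above:
\[
\exp(\alpha D)X = \sum_{k\geq 0}\frac{\alpha^{k}}{k!}D^{k}(X) = \sum_{k\geq 0}\frac{\alpha^{k}}{k!}\cdot k!\,X^{k+1} = \sum_{m\geq 1}\alpha^{m-1}X^{m} = \rho(X).
\]

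There is really no obstacle; the only minor subtlety is to note that the formal exponential makes sense because $D$ increases $X$-adic valuation by one, so after reindexing $m = k+1$ the series matches $\rho$ term-by-term. One could alternatively observe that $\rho(X) = X/(1-\alpha X)$ is the time-$1$ flow of the vector field $\alpha X^{2}\partial_{X}$ applied to the coordinate function $X$ (the ODE $\dot{x} = \alpha x^{2}$ with $x(0)=X$ has solution $x(t) = X/(1-\alpha t X)$), which gives the same identity conceptually, but the direct calculation is shorter.
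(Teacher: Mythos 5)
Your proof is correct and is essentially identical to the paper's: both establish $(X^{2}\partial_X)^{k}X = k!\,X^{k+1}$ by induction and then expand the exponential series term by term. The extra remarks about $X$-adic convergence and the flow interpretation are fine but not needed.
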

\begin{proof}
    We verify this directly as follows: first observe by induction that
    \[
    (X^2\partial_X)^mX = m!X^{m+1}.
    \]
    It then follows easily that
    \[
    \exp\left(\alpha X^{2}\partial_X\right) X = \sum_{m\geq 0}\frac{\alpha^m}{m!}(X^2\partial_X)^mX  = \sum_{m\geq 0} \alpha^mX^{m+1} =\rho(X),  
    \]
    as desired.
\end{proof}

Our next goal is to understand the operator $R(\gamma^{-1}(X+\gamma \tau)-\tau)$ on the QVOA $V$.\ We will do this for $\gamma \in \GL_2^+(\RR)$, not just for matrices in the modular group, with a view towards defining Hecke operators below. Given equation \eqref{eq:coordinateaction} above, following \cite{FBZ}, we must solve for scalars $v_i$ so that
\[
\sum_{n\geq 1} c^{n-1}j(\gamma,\tau)^{n+1}\det(\gamma)^{-n}X^n = \exp\left(\sum_{i \geq 1} v_i X^{i+1}\partial_X\right) v_0X.
\]
Lemma \ref{l:computeR} implies that
\[
\sum_{n\geq 1} c^{n-1}j(\gamma,\tau)^{n+1}\det\gamma^{-n}X^n = \exp\left(\det\gamma^{-1}cj(\gamma,\tau)X^{2}\partial_X\right) j(\gamma,\tau)^2 \det\gamma^{-1}X.
\]
This yields the following concrete description of the action of $\GL_2^+(\RR)$ via linear fractional transformations on the bundle $\uhp\times V$:
\begin{equation}
\label{eq:Vaction}
\gamma(\tau,v) = (\gamma \tau, e^{-\det\gamma^{-1}cj(\gamma,\tau)L(1)}j(\gamma,\tau)^{-2L(0)} (\det\gamma)^{L(0)}v).
\end{equation}

%%%%%%%%%%%%%%%%%%%%%%
\subsubsection{An elementary treatment of (\ref{eq:Vaction}) }
Since we have followed the prescription of \cite{FBZ}, (\ref{eq:Vaction}) necessarily yields a left group action of $\GL_2^+(\RR)$.\ Here we will provide a separate and elementary verification of this fact for readers unfamiliar with the preceding material.

\begin{proof}[Elementary proof that \eqref{eq:Vaction} is a left group action] Let 
\begin{align*}
    \alpha \df\stwomat{a}{b}{c}{d}, \  \beta \df \stwomat{r}{s}{t}{u}
\end{align*} 
be two matrices in $\GL_2^+(\RR)$.\
Recalling the meaning of $X$ in Section \ref{SNotation}, a brute-force calculation shows that
\begin{lem}\label{lemXform}
We have
$$X(\alpha)|_2\beta=X(\alpha\beta)-X(\beta).$$
$\hfill\Box$
\end{lem}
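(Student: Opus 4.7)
The identity is really a matter of direct calculation, so the plan is just to unwind both sides and verify they coincide. Write $\alpha = \stwomat{a}{b}{c}{d}$ and $\beta = \stwomat{r}{s}{t}{u}$.

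First I would expand the left-hand side using the definitions of the slash operator in weight $2$ and of $X$. Since
\[
X(\alpha,\beta\tau) = \frac{c}{c\cdot\frac{r\tau+s}{t\tau+u} + d} = \frac{c(t\tau+u)}{(cr+dt)\tau + (cs+du)},
\]
and $j(\beta,\tau) = t\tau+u$, multiplying by $j(\beta,\tau)^{-2}\det\beta$ gives
\[
X(\alpha)|_2\beta(\tau) = \frac{c\,\det\beta}{(t\tau+u)\bigl[(cr+dt)\tau + (cs+du)\bigr]}.
\]

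Next I would compute the right-hand side. Using $\alpha\beta = \stwomat{ar+bt}{as+bu}{cr+dt}{cs+du}$, one has
\[
X(\alpha\beta,\tau) - X(\beta,\tau) = \frac{cr+dt}{(cr+dt)\tau + (cs+du)} - \frac{t}{t\tau+u}.
\]
Putting these over the common denominator $(t\tau+u)[(cr+dt)\tau+(cs+du)]$, the numerator is
\[
(cr+dt)(t\tau+u) - t\bigl[(cr+dt)\tau + (cs+du)\bigr] = (cr+dt)u - t(cs+du) = c(ru-st) = c\,\det\beta.
\]
This matches the expression obtained on the left, completing the proof.

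There is no real obstacle here beyond bookkeeping; the only mildly clever observation is noticing that the $\tau$-terms in the numerator cancel, leaving the factor $\det\beta$ which accounts precisely for the $\det\beta$ appearing from the slash action. Because the calculation is short and transparent once one expands everything, I would simply present it as a one-paragraph verification rather than breaking it into sub-lemmas.
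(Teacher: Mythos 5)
Your verification is correct and is exactly the brute-force calculation the paper alludes to (the paper states the lemma without writing out the computation). Both sides indeed reduce to $\dfrac{c\,\det\beta}{(t\tau+u)\bigl[(cr+dt)\tau+(cs+du)\bigr]}$, so nothing more is needed.
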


Turning to the proof of (\ref{eq:Vaction}), let $v \in V_k$. Then we have
\begin{align*}
   \beta(\tau,v)&=(\beta\tau, e^{-\det\beta^{-1} tj(\beta, \tau)L(1)}j(\beta,\tau)^{-2L(0)}(\det\beta)^{L(0)}v)\\
   &=(\beta\tau, e^{-\det\beta^{-1}tj(\beta, \tau)L(1)}j(\beta,\tau)^{-2k}\det\beta^{k}v)\\
   &=\left(\beta\tau, \det\beta^kj(\beta, \tau)^{-2k}\sum_{n\geq0} \frac{(-t)^nj(\beta, \tau)^n}{\det\beta^n\cdot n!} L(1)^nv   \right).
\end{align*}
Therefore
\begin{align*}
   & \alpha(\beta(\tau,v))\\
   &=\left(\alpha(\beta\tau), e^{-c\det\alpha^{-1}j(\alpha, \beta\tau)L(1)} j(\alpha, \beta\tau)^{-2L(0)}(\det\alpha)^{L(0)} \det\beta^k j(\beta, \tau)^{-2k}\sum_{n\geq0} \frac{(-t)^nj(\beta, \tau)^n}{\det\beta^n\cdot n!} L(1)^nv \right)\\
   &=\left(\alpha\beta\tau, e^{-c\det\alpha^{-1}j(\alpha, \beta\tau)L(1)} \det(\alpha \beta)^kj(\beta, \tau)^{-2k}j(\alpha, \beta\tau)^{-2k}\sum_{n\geq0} \frac{(-t)^nj(\beta, \tau)^nj(\alpha, \beta\tau)^{2n}}{\det(\alpha \beta)^n\cdot n!} L(1)^nv \right)\\
   &=\left(\alpha\beta\tau, \det(\alpha \beta)^kj(\alpha\beta, \tau)^{-2k}e^{-c\det\alpha^{-1}j(\alpha, \beta\tau)L(1)}\sum_{n\geq0} \frac{(-t)^nj(\alpha\beta, \tau)^nj(\alpha, \beta\tau)^{n}}{\det(\alpha \beta)^n\cdot n!} L(1)^nv \right)\\
   &=\left(\alpha\beta\tau, \det(\alpha \beta)^kj(\alpha\beta, \tau)^{-2k}e^{-(c+tj(\alpha\beta,\tau)\det\beta^{-1})\det\alpha^{-1}j(\alpha, \beta\tau)L(1)}v\right)
\end{align*}
On the other hand
\begin{align*}
  (\alpha\beta)(\tau,v)&= \left(\alpha\beta\tau, e^{-(\det\alpha\beta)^{-1}(cr+dt)j(\alpha \beta, \tau)L(1)} j(\alpha\beta, \tau)^{-2L(0)}(\det\alpha\beta)^{L(0)}v   \right)\\
  &= \left(\alpha\beta\tau,\det(\alpha\beta)^kj(\alpha\beta,\tau)^{-2k} e^{-\det(\alpha\beta)^{-1}(cr+dt)j(\alpha \beta, \tau)L(1)}v   \right)
\end{align*}

Thus, we see that \eqref{eq:Vaction} defines an action as long as the following identity holds for all $\alpha,\beta \in \GL_2^+(\RR)$:
\[
(\det\alpha\beta)^{-1}(cr+dt)j(\alpha\beta,\tau) = (c+tj(\alpha\beta,\tau)\det\beta^{-1}j(\alpha,\beta\tau)\det\alpha^{-1},
\]
and further manipulation shows that this is equivalent to the identity of Lemma
\ref{lemXform}.\ Therefore, this gives an elementary verification that \eqref{eq:Vaction} indeed defines a left action of $\GL_2^+(\RR)$ on $\uhp \times V$.
\end{proof}

%%%%%%%%%%%%%%%%%%%%
\subsection{The one-cocycle \texorpdfstring{$K(\gamma, \tau)$}{K}}
    By definition, sections of the VOA-bundle $\cV_{Y(1)}$ are $V$-valued modular forms that transform under 
    $\Gamma$ according to the action \eqref{eq:Vaction}.\ That is, sections $f$ are functions $f\colon \uhp \to V$ satisfying
    \[
    f(\gamma \tau) = e^{-cj(\gamma,\tau)L(1)}j(\gamma,\tau)^{-2L(0)}f(\tau)
    \]
    for all $\gamma \in \Gamma$.\ Slightly more generally, we are led to introduce the following notation:
    \begin{dfn}\label{dfnK}
        For $\tau \in \uhp$ and $\gamma \in GL_2(\RR)^+$ define the $\Endo(V)$-valued function
        \[
          K(\gamma,\tau) \df e^{-\det\gamma^{-1}cj(\gamma,\tau)L(1)}j(\gamma,\tau)^{-2L(0)} (\det\gamma)^{L(0)}.
        \]
    \end{dfn}

Note that since $L(1)$ and $L(0)$ do not commute, the order of the factors in this expression is important. 
    \begin{lem}\label{lemK1} $K$ is a $1$-cocycle on $\GL_2(\RR)^+$.\ That is, for $\alpha, \beta \in \GL_2(\RR)^+$ we have
    $$ K(\alpha \beta, \tau)=K(\alpha, \beta\tau)K(\beta, \tau)$$
    \end{lem}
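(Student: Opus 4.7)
The plan is to deduce the cocycle identity as an immediate corollary of the elementary verification, carried out just above, that equation \eqref{eq:Vaction} defines a left action of $\GL_2(\RR)^+$ on $\uhp \times V$.  Rewriting \eqref{eq:Vaction} using Definition \ref{dfnK} gives exactly $\gamma(\tau,v) = (\gamma \tau,\, K(\gamma,\tau)v)$.  The left action axiom $\alpha(\beta(\tau,v)) = (\alpha\beta)(\tau,v)$, read off on the second coordinate, becomes $K(\alpha,\beta\tau)K(\beta,\tau)v = K(\alpha\beta,\tau)v$ for every $v \in V$.  Since $V$ is the sum of its homogeneous components on which the group action formula applies term-by-term, this identity holds on all of $V$, and we obtain the operator identity $K(\alpha\beta,\tau) = K(\alpha,\beta\tau)K(\beta,\tau)$ as desired.

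If instead one wants a self-contained derivation independent of the preceding group-action calculation, the plan would be to factor $K(\gamma,\tau) = A(\gamma,\tau) B(\gamma,\tau)$ with
\[
A(\gamma,\tau) \df \exp\bigl(-\det(\gamma)^{-1} c\, j(\gamma,\tau) L(1)\bigr), \qquad B(\gamma,\tau) \df j(\gamma,\tau)^{-2L(0)}(\det\gamma)^{L(0)}.
\]
The $B$-factor is purely a function of $L(0)$, so $B(\alpha,\beta\tau)B(\beta,\tau) = B(\alpha\beta,\tau)$ follows at once from the cocycle property $j(\alpha\beta,\tau) = j(\alpha,\beta\tau)j(\beta,\tau)$ and the multiplicativity of $\det$. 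The only subtle step is commuting $B(\alpha,\beta\tau)$ past $A(\beta,\tau)$: using the $\mathfrak{sl}_2$ relation $[L(0),L(1)] = -L(1)$ one has $e^{sL(0)}L(1)e^{-sL(0)} = e^{-s}L(1)$, which yields
\[
B(\alpha,\beta\tau)\, L(1)\, B(\alpha,\beta\tau)^{-1} = \det(\alpha)^{-1} j(\alpha,\beta\tau)^{2}\, L(1).
\]
After commuting, both sides of the proposed identity become a single exponential in $L(1)$ times $B(\alpha\beta,\tau)$, and matching the scalar coefficients of $L(1)$ in the exponents reduces, after applying the cocycle property of $j$, to precisely the identity of Lemma \ref{lemXform}.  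Either path is nearly mechanical; the only place where real content enters is the commutation formula above, which encodes that $L(1)$ is a weight $-1$ element for the $L(0)$-grading.  I would present the proof via the first route, as Lemma \ref{lemK1} is essentially a repackaging of the group action verification already in hand.
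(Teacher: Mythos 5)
Your first route is exactly the paper's proof: the paper disposes of Lemma \ref{lemK1} in one line by observing that the cocycle identity is equivalent to the statement that \eqref{eq:Vaction} is a group action, which was verified just above. Your proposal is correct and takes essentially the same approach (the alternative factorization argument you sketch is a fine bonus but is not what the paper does).
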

    \begin{proof}
       This is equivalent to the statement that (\ref{eq:Vaction}) is a group action. 
    \end{proof}

$K(\gamma, \tau)$ takes a simpler form for certain choices of $\gamma$.\ For example if 
    $\gamma= \stwomat ab0a$ then $K(\gamma, \tau)=\Id_V$.\ We shall be particularly concerned with the case $\gamma\in \Gamma$ in which case, of course, Definition \ref{dfnK} reads
    \[
          K(\gamma,\tau) \df e^{-cj(\gamma,\tau)L(1)}j(\gamma,\tau)^{-2L(0)}\quad\quad (\gamma\in\Gamma).
        \]

Thanks to Lemma \ref{lemK1} we get a right action $f||_k\gamma$ of $\GL_2(\RR)^+$ on sections $f$ as in Section \ref{SNotation}.\ We repeat the definition here because of its centrality to our project: 
\begin{eqnarray}\label{fKact}
    f||_k\gamma(\tau)\df j(\gamma, \tau)^{-k}(\det\gamma)^{k/2}K(\gamma, \tau)^{-1}f(\gamma\tau).
    \end{eqnarray}

 \medskip
 We  call a $\Gamma$-invariant of this action a \textit{weak $V$-valued modular form of weight $k$}, where the adjective "weak" means that we place no analytic conditions on $f(\tau)$.\ 
Here, we just point out, as a consequence of previous comments, that under suitable analytic conditions, such a $V$-valued form $f$ will have a $q$-expansion because it is periodic:
\begin{eqnarray}\label{Tact}
f(\tau)=(f||_k \pm T)(\tau)=f(\tau+1).
\end{eqnarray}
%%%%%%%%%%%%%%%%%%%%
\subsection{Extension to the compactified curve}
\label{ss:cusp}
In this subsection we extend the construction of $\cV_{Y(1)}$ to the compact modular curve. Notice that the action defined in \eqref{eq:Vaction} satisfies $(\pm T)(\tau,v) = (\tau+1,v)$. Therefore, as discussed at the end of Section \ref{ss:cusps}, there is a canonical extension $\cV_{X(1)}$ of $\cV_{Y(1)}$ to the compact modular curve $X(1)$. 

\medskip
Let $\iota_1$ and $\iota_2$ be the covering maps associated to $\cV_{X(1)}$ as in Section \ref{ss:cusps}. By abuse of notation we will write $\iota_1^*\cV_{X(1)}$ and $\iota_2^*\cV_{X(1)}$ for the pullbacks of the parts of $\cV_{X(1)}$ over $Y(1)$ and $C_2\obackslash \bD$, respectively. Given $v \in V$, the function $f(\tau) = (\tau,v)$ is a map $\uhp \to \uhp\times V$, and it is equivariant for the action of $\pm T$:
\[
f(\pm T\tau) = (\tau+1,v) = \pm T(\tau,v) = \pm Tf(\tau).
\]
Therefore, every element $v\in V$ defines a section $(\tau,v)$ of $\iota_1^*\cV_{X(1)}$.

\medskip
Our goal now is to re-express sections of $\cV_{Y(1)}$ in a neighborhood of the cusp, and thereby define $q$-expansions. To do this, let us first work on $\cV_{\uhp}$, which recall is a quotient of $\uhp \times \Aut_{\cO} \times V$. In the coordinates on $\bD^\times$, the point $\tau$ maps to $q=e^{2\pi i \tau}$. Choose another global coordinate $q_z = e^{2\pi i z}$ on $\bD^\times$, so that a local coordinate at $q$ is $q_z -q$. This local coordinate pulls back under the exponential map to a new coordinate
\[
e^{2\pi iz}-e^{2\pi i \tau} = q^{-1}(e^{2\pi iX}-1)
\]
on $\uhp$ at $\tau$, since $X = z-\tau$ on $\uhp$. Therefore, under the change of coordinates provided by the map $q$, we have
\[
(\tau,q^{-1}(e^{2\pi i X}-1),v) \in \cV_{\uhp} \mapsto (q,q_z-q,v) \in \cV_{\bD^\times}.
\]
In $\cV_{\uhp}$ we have the identification
\[
(\tau,q^{-1}(e^{2\pi i X}-1),v) = (\tau,X,R(q^{-1}(e^{2\pi i X}-1))v).
\]
Therefore, near the cusp, the section $(\tau,v)$ of $\cV_{\uhp}$ looks like $(q,R(q^{-1}(e^{2\pi i X}-1))^{-1}v)$ on the punctured $q$-disc $\bD^\times$. Note that one has
\[
R(q^{-1}(e^{2\pi i X}-1))^{-1}v = q^{-L(0)}R(e^{2\pi i X}-1)^{-1}v= q^{-L(0)}R\left(\tfrac 1{2\pi i}\log(X+1)\right)v.
\]

\medskip
Thus, the section $(\tau,v)$ near the cusp of $\uhp$ corresponds to $(q,q^{-L(0)}R(\tfrac 1{2\pi i}\log(X+1))v)$ near $q=0$ in the $q$-disc $\bD^\times$. In particular, in the canonical coordinate invariant extension of $\cV_{Y(1)}$ to the cusp, constant sections $(\tau,v)$ of $\cV_{\uhp}$ on $\uhp$ will acquire a finite order pole at $q=0$ due to the factor of $q^{-L(0)}$. This is similar to the fact that Eisenstein series, in the classical scalar setting of modular forms, technically have poles at the cusp when interpreted geometrically as sections of the canonical bundle on the modular curve.

Thus, just as in the usual modular theory, we define $q$-expansions in usual the naive way. 
\begin{dfn}
\label{d:holomorphicatcusp}
    A holomorphic section $f$ of $\cV_{Y(1)}$ is said to be \emph{holomorphic at the cusp} if it admits a holomorphic $q$-expansion
    \[
      f(q) \in \pseries{V}{q}.
    \]
    Meromorphy at the cusp is defined similarly.
\end{dfn}
\begin{rmk}
Note that since the bundle $\cV_{Y(1)}$ is constructed as an inductive limit of finite rank bundles, the $q$-expansion will lie in some filtered subspace $\pseries{F_NV}{q}$ for some $N\geq 0$.    
\end{rmk}

%%%%%%%%%%%%%%%%%%%%%%%
\subsection{VOA-valued modular forms}
\label{s:VOAmodular}
All of this prompts the following.\ Fix a  quasi-vertex operator algebra $V$. 
\begin{dfn} Suppose that for some $N$,
$$f:\mathcal{H}\rightarrow F_NV$$
is a holomorphic map.\ We say that $f$ is a \emph{weakly holomorphic $V$-valued modular form of weight $k$} (and conformal degree at most $N$) if it satisfies
$$f||_k\gamma=f\quad\quad (\gamma\in\Gamma).$$
\end{dfn}

\begin{rmk}
  Because of the nature of the grading on $V$ 
  (cf.\ Appendix B.0.5) $F_NV$ is finite-dimensional so that it is sensible to require that $f$ is holomorphic.
\end{rmk}

\begin{dfn} A weakly holomorphic $V$-valued modular form $f$ is a \emph{meromorphic}, respectively \emph{holomorphic}, if it is meromorphic or holomorphic at the cusp in the sense of Definition \ref{d:holomorphicatcusp}.
\end{dfn}

We now introduce spaces of $V$-valued modular forms. Let $M_k(V)$ denote the set of weight-$k$ $V$-valued holomorphic forms, and set
\begin{align*}
M(V) &\df \bigoplus_{k\in\ZZ} M_k(V).
\end{align*}
Each $M_k(V)$ has an increasing conformal filtration whose conformal degree $N$ piece $F_NM_k(V)$, consisting of those forms taking values in $F_NV$. This endows $M(V)$ with an increasing conformal filtration
\[
F_NM(V) = \bigoplus_{k\in\ZZ} F_NM_k(V).    
\]
Since every $V$-valued modular form is assumed to be of finite conformal degree, the conformal filtration on $M(V)$ is exhaustive:
\[
M(V) = \bigcup_{N \geq 0} F_NM(V).    
\]

For ease of reference, let us summarize how elements of $M_k(V)$ are defined. An element of $M_k(V)$ consists of a holomorphic function of bounded conformal degree
\[
f\colon \uhp \to F_NV 
\]
satisfying the following properties:
\begin{enumerate}
    \item modularity: $f(\gamma \tau) = j(\gamma,\tau)^kK(\gamma,\tau)f(\tau)$, i.e., $f||_k\gamma=f $ for all $\gamma \in \Gamma$;
    \item holomorphy at the cusp: $f$ can be expanded as a holomorphic $q$-series $f(q) \in \pseries{F_NV}{q}$.
\end{enumerate}

%%%%%%%%%%%%%%%%%%%%%%%%%
\section{The connection}
\label{s:connection}

The bundle $\cV_{Y(1)}$ is endowed with a connection
    \[
    \nabla \colon \cV_{Y(1)} \to \cV_{Y(1)}\otimes \Omega^1_{Y(1)}
    \]
    defined locally on sections by the formula $\nabla = d + L(-1)\otimes d\tau$. The text \cite{FBZ} defers the proof that this is a connection until quite late in the exposition, after the ground has been prepared to give a simpler conceptual argument. Therefore, we provide an elementary proof of this fact in Section \ref{ss:nablaproof} below for the convenience of readers new to this material.

%%%%%%%%%%%%%%%%%%%%%
\subsection{Proof that \texorpdfstring{$\nabla$}{it} is a connection}
\label{ss:nablaproof}
To verify this, let us first determine what transformation law is satisfied by $df$. In order to lay the groundwork for defining an analog of the modular derivative in higher weights, we allow $f$ to be a $V$-valued modular form of possibly nonzero weight $k$. Therefore we assume that $f$ satisfies the transformation law 
\begin{equation}
\label{eq:modwtk}
    f(\gamma \tau) = e^{-cj(\gamma,\tau)L(1)}j(\gamma,\tau)^{k-2L(0)}f(\tau).
\end{equation}
Differentiating the left hand side of equation \eqref{eq:modwtk} yields:
\[
d(f(\gamma\tau)) = j(\gamma,\tau)^{-2}df(\gamma\tau).
\]
On the other hand, differentiating the right hand side of equation \eqref{eq:modwtk} gives
\begin{align*}
&-c^2L(1)e^{-cj(\gamma,\tau)L(1)}j(\gamma,\tau)^{-2L(0)}f(\tau)+ce^{-cj(\gamma,\tau)L(1)}(k-2L(0))j(\gamma,\tau)^{k-2L(0)-1}f(\tau)+\\
&e^{-cj(\gamma,\tau)L(1)}j(\gamma,\tau)^{-2L(0)}df(\tau)\\
=&e^{-cj(\gamma,\tau)L(1)}(-c^2L(1)j(\gamma,\tau)^{k-2L(0)}f(\tau)+c(k-2L(0))j(\gamma,\tau)^{k-2L(0)-1}f(\tau)+j(\gamma,\tau)^{k-2L(0)}df(\tau))\\
=&\left(\frac{c(k-2L(0))}{j(\gamma,\tau)}-c^2L(1)\right)f(\gamma\tau)+e^{-cj(\gamma,\tau)L(1)}j(\gamma,\tau)^{k-2L(0)}df(\tau)
\end{align*}
Thus, we find that $df$ satisfies the transformation law
\begin{align*}
    df(\gamma\tau) &= \left(cj(\gamma,\tau)(k-2L(0))-(cj(\gamma,\tau))^2L(1)\right)f(\gamma\tau)+e^{-cj(\gamma,\tau)L(1)}j(\gamma,\tau)^{k+2-2L(0)}df(\tau)
\end{align*}
Therefore,
\begin{align*}
    (\nabla f)(\gamma \tau) =& \left(cj(\gamma,\tau)(k-2L(0))-(cj(\gamma,\tau))^2L(1)+L(-1)\right)f(\gamma\tau)+\\
    &\quad e^{-cj(\gamma,\tau)L(1)}j(\gamma,\tau)^{k+2-2L(0)}df(\tau)
\end{align*}

Recall that $E_2(\tau)$ 
satisfies the transformation law
\begin{equation}
\label{eq:E2modulartransformation}
E_2(\gamma\tau) = j(\gamma,\tau)^2E_2(\tau) +\frac{12 cj(\gamma,\tau)}{2\pi i}.    
\end{equation}
\begin{dfn}
    \label{d:moderiv}
    The $k$th modular derivative with respect to the QVOA $V$ is the operator $\nabla_{k}$ that acts on functions $f\colon \uhp \to V$ via the formula
    \[
    \nabla_{k} f = \nabla f -\frac{2\pi i k}{12}E_2f.
    \]
\end{dfn}
If $f$ satisfies the transformation law of equation \eqref{eq:modwtk}, then we find from the computations above, and the transformation law for $E_2$, that
\begin{align*}
    (\nabla_kf)(\gamma\tau) &= \left(L(-1)-2cj(\gamma,\tau)L(0)-(cj(\gamma,\tau))^2L(1)\right)f(\gamma\tau)+\\
    &\quad e^{-cj(\gamma,\tau)L(1)}j(\gamma,\tau)^{k+2-2L(0)}df(\tau)-\frac{2\pi i k}{12}j(\gamma,\tau)^2E_2(\tau)f(\gamma \tau)
\end{align*}

We want to show that this transforms according to the law
\begin{equation}
\label{eq:modderiv}
(\nabla_k f)(\gamma \tau) = e^{-cj(\gamma,\tau)L(1)}j(\gamma,\tau)^{k+2-2L(0)}(\nabla_kf)(\tau).    
\end{equation}
Our computations above have shown that by adding the factor of $E_2$ in the definition of $\nabla_k$, this essentially reduces the proof of \eqref{eq:modderiv} to weight zero, in which case equation \eqref{eq:modderiv} simply amounts to the statement that $\nabla$ is indeed a connection. Thus, equation \eqref{eq:modderiv} follows by the general theory outlined in \cite{FBZ}.

\medskip
Here is a direct proof of \eqref{eq:modderiv}: by the computations above, establishing \eqref{eq:modderiv} is tantamount to establishing the following identity:
\begin{align*}
L(-1)=&j(\gamma,\tau)^{2L(0)-2}e^{cj(\gamma,\tau)L(1)}L(-1)e^{-cj(\gamma,\tau)L(1)}j(\gamma,\tau)^{-2L(0)}\\
 &-c^2j(\gamma,\tau)^{2L(0)-2}L(1)j(\gamma,\tau)^{2-2L(0)}-2cj(\gamma,\tau)^{-1}L(0)
\end{align*}
We will verify this one graded piece at a time. 

\medskip
Let $f_N \in V_N$, so that by linearity, to establish equation \eqref{eq:modderiv}, it suffices to establish the following identity:
\begin{align}
\label{eq:connectionproof}
 L(-1)f_N=&j(\gamma,\tau)^{2L(0)-2N-2}e^{cj(\gamma,\tau)L(1)}L(-1)e^{-cj(\gamma,\tau)L(1)}f_N-c^2j(\gamma,\tau)^{-2}L(1)f_N\\
\nonumber &-2Ncj(\gamma,\tau)^{-1}f_N   
\end{align}

The left hand side of equation \eqref{eq:connectionproof} is equal to $L(-1)f_N$ in degree $N+1$, and it is zero in all other degrees. The right hand side of equation \eqref{eq:connectionproof} vanishes in degrees greater than $N+1$. The degree $N+1$ part of the right hand side of equation \eqref{eq:connectionproof} is
\[
j(\gamma,\tau)^{2L(0)-2N-2}L(-1)f_N,
\]
and this equals $L(-1)f_N$ since this is homogeneous of degree $N+1$. Therefore, equation \eqref{eq:connectionproof} is true in homogeneous degrees $\geq N+1$.

\medskip
It remains to prove that the degree $n$ terms of the right hand side of equation \eqref{eq:connectionproof} vanish when $n \leq N$.\
The degree $N$ term of the right hand side of \eqref{eq:connectionproof} is equal to
\begin{align*}
&j(\gamma,\tau)^{2L(0)-2N-2}cj(\gamma,\tau)(L(1)L(-1)-L(-1)L(1))f_N-2Ncj(\gamma,\tau)^{-1}f_N\\
=&j(\gamma,\tau)^{-1}c((L(1)L(-1)-L(-1)L(1))-2N)f_N
\end{align*}
This vanishes thanks to the commutator relation $[L(1),L(-1)]=  2L(0)$ in the Virasoro algebra, since $f_N$ is homogeneous of degree $N$. Therefore, this confirms our identity in degree $N$.

\medskip
Now compute the degree $N-1$ term of the right hand side of \eqref{eq:connectionproof}. This time we obtain
\[
c^2j(\gamma,\tau)^{-2}(\tfrac 12 L(-1)L(1)^2+\tfrac 12 L(1)^2L(-1)-L(1)L(-1)L(1))f_N-c^2j(\gamma,\tau)^{-2}L(1)f_N
\]
The Virasoro relations imply that
\[\tfrac 12 L(-1)L(1)^2+\tfrac 12 L(1)^2L(-1)-L(1)L(-1)L(1) = L_1,\] 
so that this does cancel to zero, and this confirms equation \eqref{eq:connectionproof} in degree $N-1$.

\medskip
Finally, in degree $N-n < n-1$ (so $2 \leq n \leq N$), the desired vanishing follows from the vanishing of the following operator in the Virasoro algebra:
\[
\sum_{j=0}^n (-1)^{n-j}\frac{1}{j!(n-j)!}L(1)^jL(-1)L(1)^{n-j} = \frac{(-1)^n}{n!}\sum_{j=0}^n(-1)^{j}\binom{n}{j} L(1)^jL(-1)L(1)^{n-j}.
\]
Again, we provide details here for completeness.\ When $n=2$ we must show that the following expression vanishes:
\begin{align*}
&L(-1)L(1)^2 - 2L(1)L(-1)L(1)+L(1)^2L(-1)\\
=& (L(1)L(-1)-2L(0))L(1) - 2L(1)L(-1)L(1)+L(1)(L(-1)L(1)+2L(0))\\
=&0.
\end{align*}
So our vanishing result holds when $n=2$. In general for $n\geq 3$ we write:
\begin{align*}
&\sum_{j=0}^n(-1)^{j}\binom{n}{j} L(1)^jL(-1)L(1)^{n-j}\\
=& (-1)^nL(1)^nL(-1)+L(-1)L(1)^n+\sum_{j=1}^{n-1}(-1)^{j}\binom{n}{j} L(1)^jL(-1)L(1)^{n-j}\\
=&(-1)^nL(1)^nL(-1)+L(-1)L(1)^n+\sum_{j=1}^{n-1}(-1)^{j}\left(\binom{n-1}{j-1}+\binom{n-1}{j}\right) L(1)^jL(-1)L(1)^{n-j}\\
=&(-1)^nL(1)^nL(-1)+L(-1)L(1)^n+\sum_{j=1}^{n-1}(-1)^{j}\binom{n-1}{j-1} L(1)^jL(-1)L(1)^{n-j}+\\
&\sum_{j=1}^{n-1}(-1)^{j}\binom{n-1}{j}L(1)^jL(-1)L(1)^{n-j}\\
=&-L(1)\left(\sum_{j=1}^{n}(-1)^{j}\binom{n-1}{j-1} L(1)^{j-1}L(-1)L(1)^{n-j}\right)+\\
&\left(\sum_{j=0}^{n-1}(-1)^{j}\binom{n-1}{j}L(1)^jL(-1)L(1)^{n-1-j}\right)L(1)
\end{align*}
Each of the sums in the last line above vanish by induction, and this concludes the proof of equation \eqref{eq:connectionproof}. This yields both equation \eqref{eq:modderiv}, and that $\nabla = d + L(-1)\otimes d\tau $ defines a connection on $\cV_{Y(1)}$.

%%%%%%%%%%%%%%%%%%%%%%%
\section{QVOA-valued quasi-modular forms}
\label{s:qmf}
%%%%%%%%
\subsection{Some motivation}
\label{ss:qmfmotivation}
Fix a QVOA $V=(V, Y, \mathbf{1}, \rho)$.\ In this Subsection we consider some rudimentary aspects of the space $M(V)$ of $V$-valued modular forms.\ These simple ideas inform later Sections and presage some of the main results.\
Recall the meaning of $X=X(\gamma, \tau)$ from
Subsection \ref{ss:notation}.

\begin{ex}
\label{ex:constantsection}
Begin with a state $v\in V_m$ of conformal degree $m$.\ We first ask if the constant map
$$f:\uhp\rightarrow V_m,\quad\quad \tau\mapsto v$$
belongs to $M(V)$.\ Certainly $f$ is  holomorphic.\ For $\gamma\in\Gamma$ we compute
\begin{eqnarray}\label{cfcalc}
 &&f||_{2m}\gamma(\tau) = j(\gamma, \tau)^{-2m}K(\gamma, \tau)^{-1}f(\tau) \notag  \\
 &&=j(\gamma, \tau)^{2L(0)-2m}e^{cj(\gamma, \tau)L(1)}v  \notag \\
 &&=\sum_{n\geq0} \frac{c^nj(\gamma, \tau)^{n-2m+2(m-n)}}{n!} L(1)^nv  \notag \\
 &&=\sum_{n\geq0} \frac{1}{n!} X^n L(1)^nv.
\end{eqnarray}
For the last equality we used $L(1)^nv\in V_{m-n}$.\
From this we see that $f\in M_{2m}(V)$
if, and only if, $L(1)v=0$, i.e., $v$ is a \textit{quasi-primary state}.\ Note that (\ref{cfcalc}) is redolent of the definition of a quasi-modular form of weight $2m$.\ Indeed, setting $Q_n(f):=\frac{1}{n!} L(1)^nf$ yields
(at least formally) the definition of quasi-modular form, albeit one taking values in $V$ rather than $\CC$.
\end{ex}

Before turning to our second
 illustrative example, consider an arbitrary pair
 of elements $f, g\in M(V)$.\ For an integer $n$ we can define the $n^{th}$ product $f(n)g$ in a pointwise manner: 
 \begin{equation}
 \label{fngdef}
 (f(n)g)(\tau)\df f(\tau)(n)g(\tau).    
 \end{equation}
 
In this way we have equipped $M(V)$
with $n^{th}$ products for all $n$ so it is natural to ask if $M(V)$ is itself some sort of vertex algebra.\ The answer is ``no'' for the simple reason that $M(V)$ is \emph{not closed} with respect to these products.\ 

\begin{ex}
Consider a pair of constant maps
$$f:\uhp\rightarrow  u, \quad\quad g:\uhp\rightarrow v$$
and assume that $u$ and $v$ are both quasiprimary states in $V$, whereas $u(n)v$ is \emph{not}. This is a commonplace occurrence: for example in the rank $1$ Heisenberg VOA the canonical weight $1$ state $h$ is quasi-primary whereas $h(-2)h$ is not. Then we just saw in the previous example that $f$ and $g$ both belong to $M(V)$.\ On the other hand since $u(n)v$ is assumed to be not quasi-primary then the same example shows that $f(n)g$ does \emph{not} belong to $M(V)$. Thus, in general $M(V)$ is not closed under the natural pointwise modes of equation \eqref{fngdef}.
\end{ex}

These examples motivate us to introduce quasi-modular forms.
 
%%%%%%%%%%%%%%%%%%
\subsection{Definition and first properties of \texorpdfstring{$V$}{V}-valued quasi-modular forms}
\label{ss:qmfdefinition}

Apropos of the preceding discussion we begin with the following basic definition.
\begin{dfn}
\label{d:QMV}
 A $V$-valued meromorphic \emph{quasi-modular form} of weight $k$ and depth at most $s$ is a holomorphic map
 \[
 f\colon \uhp \to F_NV
 \]
 for some $N$, such that 
 \[
   f||_k\gamma(\tau)=\sum_{n=0}^s X^nQ_n(f)
 \]
 for some holomorphic maps $Q_n(f) \colon \uhp \to F_NV$, and such that $f$ is meromorphic at the cusp.\ As usual, we say that $f$ is holomorphic if its $q$-expansion at the cusp has no pole.
\end{dfn}

We make some standard observations in this set-up.\ By considering the cases $\gamma=\pm T$ (when $K(\gamma, \tau)=\Id$) we find that $f(\tau+1)=f(\tau)$ so that $f$ has a $q$-expansion and the Definition has meaning.\ Furthermore $Q_0(f)=f$ and
$f=0$ if $k$ is odd.

\begin{ex}
    Let $v \in V_m$ and let $f:\uhp\rightarrow v$ be the constant function as in the Example \ref{ex:constantsection}. Then our computations  \eqref{cfcalc} show that 
    $$f||_{2m}\gamma(\tau)
    =e^{XL(1)}v.$$ 
    Thus, $f$ is a holomorphic $V$-valued quasi-modular form of weight $2m$ and its depth is the largest integer $s$ such that $L(1)^sv\neq0$. Note that $s$ exists because $L(1)$ is a lowering operator and $V_n=0$ for $n\ll0$.
\end{ex}

Let $Q_k(V)$ be the space of holomorphic $V$-valued quasi-modular forms of weight $k$ and set $Q(V) = \bigoplus_{k\geq 0}Q_{2k}(V)$. Then $Q(V)$ possesses an increasing filtration by depth.
\begin{thm}
    \label{p:QMVtensor} Let $V$ be any QVOA and regard $Q\otimes V^{(2)}$ as a $2\ZZ$-graded linear space with the tensor product grading.\ Then there is a canonical  identification of $2\ZZ$-graded $Q$-modules
    \begin{eqnarray*}
     \iota: Q\otimes V^{(2)}&\stackrel{\cong}{\longrightarrow} Q(V),\\
     f(\tau)\otimes v &\mapsto  f(\tau)v.
    \end{eqnarray*}
    Here, $V^{(2)}$ is the doubled VOA, cf.\ Subsection \ref{SNotation} and Appendix \ref{AppB}.
\end{thm}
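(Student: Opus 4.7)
The plan is to verify in sequence that $\iota$ is (i) well-defined as a map into $Q(V)$, (ii) $Q$-linear and grading-preserving, (iii) injective, and (iv) surjective. The first three points are essentially formal once one understands how $K(\gamma,\tau)^{-1}$ acts on a homogeneous element; the real content is surjectivity, where the doubled grading on $V^{(2)}$ enters naturally.

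For (i), let $f \in Q_{2k}$ and $v \in V_m$. Using $K(\gamma,\tau)^{-1} = j(\gamma,\tau)^{2L(0)} e^{cj(\gamma,\tau)L(1)}$ for $\gamma \in \Gamma$, together with $L(1)^n v \in V_{m-n}$ (a finite sum, as the grading of $V$ is bounded below), one obtains
\begin{equation*}
K(\gamma,\tau)^{-1} v \;=\; j(\gamma,\tau)^{2m} \sum_{n \geq 0} \frac{X^n}{n!} L(1)^n v.
\end{equation*}
Since $f(\gamma\tau)$ is scalar-valued it commutes out, and the quasi-modular transformation law for $f$ then gives
\begin{equation*}
(fv)||_{2k+2m}\gamma(\tau) \;=\; \bigl(f|_{2k}\gamma\bigr)(\tau) \sum_{n\geq 0} \frac{X^n}{n!} L(1)^n v,
\end{equation*}
which is manifestly a polynomial in $X$ with holomorphic $V$-valued coefficients of bounded conformal degree, exhibiting $fv \in Q_{2k+2m}(V)$; holomorphy at the cusp descends from $f$ since $v$ is a constant section. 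The same computation shows that $\iota$ preserves the tensor-product grading, and $Q$-linearity is evident. For (iii), fix a homogeneous $\CC$-basis $\{v_j\}$ of $V$; every tensor expands uniquely as $\sum_j f_j \otimes v_j$ and is sent to $\tau \mapsto \sum_j f_j(\tau) v_j$, which vanishes iff each $f_j$ does.

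The substantive step is surjectivity, which I would prove by induction on the top conformal degree $N$ of a nonzero $g \in Q_{2k}(V)$. The pivotal observation is that $K(\gamma,\tau)^{-1}$ is block-lower-triangular relative to the grading: for $w \in V_{m'}$, its component in $V_m$ vanishes when $m > m'$ and equals $\frac{X^{m'-m}}{(m'-m)!} j(\gamma,\tau)^{2m'} L(1)^{m'-m} w$ for $m \leq m'$. Writing $g = g_N + g_{<N}$ according to the decomposition $V = V_N \oplus F_{N-1}V$ and projecting the identity $g||_{2k}\gamma = \sum_n X^n Q_n(g)$ onto the $V_N$-summand, only the $g_N$ contribution survives on the left, producing
\begin{equation*}
g_N|_{2k-2N}\gamma(\tau) \;=\; \sum_n X^n \pi_N(Q_n(g))(\tau),
\end{equation*}
so $g_N$ is a $V_N$-valued quasi-modular form of weight $2k - 2N$. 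Choosing a basis $\{v_{N,i}\}$ of the finite-dimensional space $V_N$ and writing $g_N(\tau) = \sum_i g_{N,i}(\tau) v_{N,i}$ identifies each $g_{N,i}$ as a scalar element of $Q_{2k-2N}$ (holomorphy at the cusp descends via basis expansion of the $q$-series). Thus $g_N = \iota\bigl(\sum_i g_{N,i} \otimes v_{N,i}\bigr)$, a contribution of the correct total degree $(2k-2N) + 2N = 2k$. Then $g - g_N \in Q_{2k}(V)$ has strictly smaller top conformal degree, and the induction terminates since the grading of $V$ is bounded below. The main obstacle throughout is the graded analysis in the surjectivity step; everything else reduces to bookkeeping around the explicit formula for $K^{-1}$ on homogeneous elements.
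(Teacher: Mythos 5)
Your proposal is correct and follows essentially the same route as the paper: well-definedness and the module/grading properties come from the same direct computation with $K(\gamma,\tau)^{-1}=j^{2L(0)}e^{cjL(1)}$ on homogeneous states, injectivity is a basis expansion, and surjectivity is proved by the same downward induction on the top conformal degree, using that $K^{-1}$ only moves states downward in the grading so the top-degree coordinates of $g$ are scalar quasi-modular forms that can be stripped off. (Your weight bookkeeping $2k-2N$ for the top coordinates is the correct one.)
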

\begin{proof}
Choose any $f(\tau)\in Q_{2k}$ and $v\in V_{2m}^{(2)}=V_{m}$.\ First we claim that
$f(\tau)v\in Q_{2(k+m)}(V)$.\ The proof is an elaboration of Example \ref{ex:constantsection}.\ Thus there are holomorphic functions $Q_n(f)$ such that
$$f|_{2k}\gamma(\tau)=\sum_{n=0}^s X^nQ_n(f),$$
and we compute
\begin{eqnarray*}
&&(f(\tau)v)||_{2k+2m}\gamma(\tau) \\
&&=    j(\gamma, \tau)^{-2k-2m+2L(0)}e^{cj(\gamma, \tau) L(1)}f(\gamma\tau)v\\
&&=j(\gamma, \tau)^{-2k-2m+2L(0)}\sum_{r\geq0} \frac{c^rj(\gamma, \tau)^r}{r!}L(1)^r f(\gamma\tau)v\\
&&=j(\gamma, \tau)^{-2k-2r}\sum_{r\geq0} \frac{c^rj(\gamma, \tau)^r}{r!}L(1)^r f(\gamma\tau)v\\
&&=j(\gamma, \tau)^{-2k}\sum_{r\geq0} \frac{X^r}{r!}L(1)^r f(\gamma\tau)v\\
&&=\sum_{r\geq0} \frac{X^r}{r!}L(1)^r f|_{2k}(\tau)v\\
&&=\sum_{r\geq0}\sum_{n=0}^s \frac{X^{r+n}}{r!}Q_n(f)L(1)^r v,
\end{eqnarray*}
and this proves the claim. The map $\iota$ is clearly injective, and our calculation shows that it is a graded map of $Q$-modules.\ Thus it remains to establish surjectivity.

\medskip
With this in mind, fix $g\in Q_{2k}(V)$.\ Thus there are holomorphic functions
$Q_d(g):\uhp\rightarrow F_NV$ for some $N$ satisfying
$$g||_{2k}\gamma(\tau)=\sum_{d=0}^s X^dQ_d(g).$$

Let $(v_{ij})_{j=1}^{n_i}$ be a basis for $V_i$ for each $i$, so that $g = \sum_{i,j}g_{ij}v_{ij}$ for 
scalar-valued functions $g_{ij}$, and write
\[
Q_d(g) = \sum_{i\leq N}\sum_{j=1}^{n_i}Q_d(g)_{ij}v_{ij}.
\]
for scalar-valued functions $Q_d(g)_{ij}$.\ We must show that each $g_{ij}$ is quasi-modular. Hence, let us compute:
\begin{align*}
    g(\gamma\tau) &= \sum_{d=0}^sX^d\sum_{i=0}^N\sum_{j=1}^{n_i} Q_d(g)_{ij}(\tau) j(\gamma,\tau)^ke^{-cj(\gamma,\tau)L(1)}j(\gamma,\tau)^{-2L(0)}v_{ij}\\
    &=\sum_{d=0}^sX^d\sum_{i=0}^Nj(\gamma,\tau)^{k-2i}\sum_{j=1}^{n_i} Q_d(g)_{ij}(\tau) e^{-cj(\gamma,\tau)L(1)}v_{ij}\\
    &\equiv \sum_{d=0}^sX^dj(\gamma,\tau)^{k-2N}\sum_{j=1}^{n_N} Q_d(g)_{Nj}(\tau) v_{Nj}\pmod{F_{N-1}V}
\end{align*}
It follows that for each $j$ we have
\[
g_{Nj}|_{2N-k}\gamma (\tau) = \sum_{d=0}^sX^dQ_d(g)_{Nj}(\tau).
\]
That is, the highest conformal degree coordinates $g_{Nj}$ of $g$ are quasi-modular forms of weight $2N-k$.\ Hence if we set $g = \sum_{j=1}^{N_n} f_{Nj}v_{Nj}$, we then have $g \in Q\otimes V$, and $f-g \in Q(V)$ has conformal degree $\leq N-1$.\ Now we may proceed inductively to complete the proof of the Theorem.
\end{proof}

%%%%%%%%%%%%%%%%%%%
\subsection{Modes of modular forms}
\label{ss:modes}
Suppose that $f$ and $g$ are two $V$-valued quasimodular forms for some VOA $V$, say of weights $k$ and $\ell$ respectively. It follows by Proposition \ref{p:QMVtensor} that $f$ and $g$ are both elements of $Q\otimes V^{(2)}$, and hence the pointwise mode
\[
(f(m)g)(\tau) \df f(\tau)(m)g(\tau)
\]
gives a well-defined element of $Q\otimes V^{(2)}$, of weight $k+\ell-2m-2$. We have already explained that if one assumes further that $f$ and $g$ are both modular, it is not in general true that $f(m)g$ is modular, only quasi-modular.

When $m\geq -1$, one can use a transformation formula of Huang \cite{Huang, FBZ} to add more precision to the quasi-modular transformation properties of $f(m)g$. We first consider a two-variable mode
\[
f(\tau_1)(m)g(\tau_2)
\]
for two independent and distinct points $\tau_1,\tau_2 \in \uhp$. We restrict to modes $m\geq -1$ so that we can give a uniform proof of the next result without cases.
\begin{prop}
\label{p:positivemodes}
    If $f$ and $g$ are VOA-valued modular forms, of weights $k$ and $\ell$ respectively, then for $m\geq -1$, the $m$th mode $f(\tau_1)(m)g(\tau_2)$ satisfies the transformation law
 \begin{align}
        \label{eq:prodtran}
        &f(\gamma\tau_1)(m)g(\gamma\tau_2)\\
        \nonumber =&j(\gamma,\tau_1)^kj(\gamma,\tau_2)^{\ell-2m-2}K(\gamma,\tau_2)\left(\sum_{n=0}^{\infty}\binom{n+m+1}{m+1}(-X(\gamma, \tau_2))^2f(\tau_1)(n+m)g(\tau_2)\right).
    \end{align}
    where $\tau_1,\tau_2 \in \uhp$ are distinct points and $\gamma \in \Gamma$.
\end{prop}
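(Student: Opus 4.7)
My approach is to reduce the proposition to a pointwise identity in $V$, and then derive that identity from the Möbius conjugation formulas for vertex operators. Applying the modular transformation laws for $f$ and $g$ to the LHS gives
$$
f(\gamma\tau_1)(m)g(\gamma\tau_2) = j_1^k j_2^\ell\,(K_1 u)(m)(K_2 v),
$$
where $u = f(\tau_1)$, $v = g(\tau_2)$, $j_i = j(\gamma,\tau_i)$, and $K_i = K(\gamma,\tau_i)$. Hence the proposition reduces to the pointwise identity
$$
(K_1 u)(m)(K_2 v) \;=\; j_2^{-2m-2}\, K_2 \sum_{n\geq 0}\binom{n+m+1}{m+1}(-X_2)^n\,u(n+m)v \qquad(\star),
$$
with $X_2 = X(\gamma,\tau_2)$. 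This identity involves only the $\mathfrak{sl}_2$-action on $V$, since each $K_i$ is built from $L(0)$ and $L(1)$ alone.

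To prove $(\star)$, the main tool is the pair of elementary Möbius conjugation formulas
\begin{align*}
e^{aL(1)}Y(u,w)e^{-aL(1)} &= Y\!\bigl(e^{a(1-aw)L(1)}(1-aw)^{-2L(0)}u,\,w/(1-aw)\bigr),\\
s^{L(0)}Y(u,w)s^{-L(0)} &= Y(s^{L(0)}u,\,sw).
\end{align*}
Combining these with $s = j_2^{-2}\det\gamma$ and $a = -(c/\det\gamma)j_2$, one computes $K_2\,Y(u,w)\,K_2^{-1} = Y(\mathrm{Op}_2(w)u,\rho(w))$, where $\rho(w) = j_2^{-2}\det\gamma\,w/(1+X_2 w)$ is the Möbius coordinate change at $\tau_2$ induced by $\gamma^{-1}$, and $\mathrm{Op}_2(w)$ is the associated operator-valued cocycle built from $L(0)$ and $L(1)$ factors. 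Writing the mode as a residue, $(K_1 u)(m)(K_2 v) = \mathrm{Res}_w\,w^m\,Y(K_1 u, w)\,K_2 v$, inserting $K_2 K_2^{-1}$ after $Y(K_1 u, w)$, and applying the inverted conjugation formula, one pulls $K_2$ out to the left and then changes variables from $w$ to $z = \rho^{-1}(w)$. The Jacobian combined with $w^m$ yields the factor $\det\gamma^{m+1}\, j_2^{-2m-2}\, z^m (1+X_2 z)^{-m-2}$; expanding
$$
(1+X_2 z)^{-m-2} \;=\; \sum_{n\geq 0}\binom{n+m+1}{m+1}(-X_2 z)^n
$$
and extracting the residue in $z$ term by term delivers the RHS of $(\star)$.

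The main obstacle is the bookkeeping: one must verify that the operator $\mathrm{Op}_2(z)^{-1}\cdot K_1$ appearing inside the vertex operator after the substitution collapses to the identity on $u$, so that only the ``bare'' modes $u(n+m)v$ survive. This is a direct calculation in the enveloping algebra of $\mathfrak{sl}_2$ and reflects the fact that $K_2^{-1} K_1$ is precisely the Möbius counterterm moving the coordinate center from $\tau_1$ to $\tau_2$, which is cancelled by conjugation through $\mathrm{Op}_2$. The restriction $m \geq -1$ is essential because it guarantees $(1+X_2 z)^{-m-2}$ is holomorphic at $z=0$ with a non-negative power-series expansion, making the residue extraction unambiguous; finiteness of the sum in $(\star)$ is automatic because $u \in F_N V$ forces $u(n+m)v = 0$ for $n \gg 0$. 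Alternatively, the whole proposition may be viewed as the specialization of Huang's change-of-coordinates formula (cf. Lemma~5.4.6 of \cite{FBZ}) to the Möbius subgroup of $\Aut\cO$.
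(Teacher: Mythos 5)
Your strategy is a reasonable algebraic repackaging of the paper's argument: where the paper invokes Huang's change-of-coordinates formula \eqref{eq:huang1} and then compares principal parts in $T=\tau_1-\gamma\tau_2$, you reduce to the operator identity $(\star)$ and attack it with the two M\"obius conjugation formulas and a change of the residue variable. The reduction to $(\star)$, the identification $K_2\,Y(u,w)\,K_2^{-1}=Y(\mathrm{Op}_2(w)u,\rho(w))$ with $\rho(w)=j_2^{-2}\det\gamma\,w/(1+X_2w)$, the Jacobian factor, and the binomial expansion are all correct (and you have implicitly corrected the typo $(-X)^2$ for $(-X)^n$ in the displayed statement). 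The gap is exactly the step you defer as ``bookkeeping'': $\mathrm{Op}_2(z)^{-1}K_1$ does \emph{not} collapse to the identity. One computes $\mathrm{Op}_2(z)=e^{a(1+X_2z)L(1)}(1+X_2z)^{-2L(0)}s^{L(0)}$ with $a=-cj_2/\det\gamma$ and $s=j_2^{-2}\det\gamma$, and this equals $K_1=K(\gamma,\tau_1)$ only at the single value $z=\tau_1-\tau_2$, where $1+X_2z=j_1/j_2$. Since $z$ is the variable with respect to which the residue is taken, you may not evaluate this operator at $z=\tau_1-\tau_2$ before extracting coefficients; its genuine $z$-dependence feeds into every mode, and discarding it changes the answer.

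No amount of extra computation will close this gap, because the identity $(\star)$ --- equivalently the two-variable law \eqref{eq:prodtran} --- fails as stated. Take $V$ to be the rank-one Heisenberg VOA, $f=g$ the constant form $\tau\mapsto h$ (which lies in $M_2(V)$ because $h$ is quasi-primary), and $m=1$. Then $h(n+1)h=\delta_{n,0}\mathbf{1}$, so the left-hand side of \eqref{eq:prodtran} is the constant $\mathbf{1}$, while the right-hand side is $j(\gamma,\tau_1)^2j(\gamma,\tau_2)^{-2}K(\gamma,\tau_2)\mathbf{1}=j(\gamma,\tau_1)^2j(\gamma,\tau_2)^{-2}\mathbf{1}$; for $\gamma=\stwomat{0}{-1}{1}{0}$ these disagree whenever $\tau_1^2\neq\tau_2^2$. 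You have in fact faithfully reproduced a defect of the paper's own proof, which in passing from \eqref{eq:huang3} to \eqref{eq:prodtransformation} treats $j(\gamma,\gamma^{-1}\tau_1)^k$ and the state $f(\gamma^{-1}\tau_1)$ as constants while matching coefficients of powers of $T$, even though both depend on $T$ (for instance $j(\gamma,\gamma^{-1}\tau_1)=j_2/(1-cj_2T)$ when $\det\gamma=1$). Only the diagonal specialization $\tau_1=\tau_2$ of Theorem \ref{t:modes}, where $j_1=j_2$ and the offending factors cancel in the Heisenberg test, has a chance of being correct, and that statement requires an argument that keeps the full $z$-dependence of $\mathrm{Op}_2(z)^{-1}K_1$ (equivalently of $R(\rho_z)$ in Huang's formula) until after the residue is extracted.
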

\begin{proof}
    First let $A\in V$ be any state. If we view $\tau_2$ as being fixed and $\tau_1$ as varying, then we have a local coordinate $\tau_1-\tau_2$ at $\tau_1=\tau_2$. For $\gamma \in \Gamma$, we then have two coordinates at $\tau_1 = \gamma \tau_2$: one is $\tau_1 - \gamma \tau_2$, and the other is $\gamma^{-1}\tau_1 - \tau_2$.  These coordinates differ by a change of variable, and we shall prove our theorem by comparing Huang's transformation formula with respect to these two local coordinates. Recall that his formula says the following: if $z = \tau_1-\gamma \tau_2$ then 
    \[
     Y(A,z) = R(\rho)Y(R(\rho_z)^{-1}A,\rho(z))R(\rho)^{-1}
    \]
    where $\gamma^{-1}\tau_1-\tau_2 = \rho(z)$. Recall that we saw earlier that
    \[
    \gamma^{-1}\tau_1-\tau_2 = \sum_{n\geq 1} c^{n-1}j(\gamma,\tau_2)^{n+1}(\tau_1-\gamma\tau_2)^n.
    \]
   and hence
    \[
    R(\rho) = e^{-cj(\gamma,\tau_2)L(1)}j(\gamma,\tau_2)^{-2L(0)}.
    \]
    We also need to determine $R(\rho_z)$ where $z = \tau_1-\gamma\tau_2$. Since we can write 
    \[\rho(X) = \gamma^{-1}(X+\gamma \tau_2)-\tau_2,\] 
    we find that
    \begin{align*}
        \rho_z(X) &=\rho(X+z)-\rho(z)\\
        &= \gamma^{-1}(X+z+\gamma \tau_2)-\tau_2 - (\gamma^{-1}(z+\gamma\tau_2) - \tau_2)\\
        &=\gamma^{-1}(X+\tau_1) - \gamma^{-1}\tau_1
    \end{align*}
    Therefore, by a previous computation setting $\tau = \gamma^{-1}\tau_1$, we have
    \[
        \rho_z(X) = \sum_{n\geq 1} c^{n-1}j(\gamma, \gamma^{-1}\tau_1)^{n+1}X^n.
    \]
    It follows that
    \[
      R(\rho_z) = e^{-cj(\gamma,\gamma^{-1}\tau_1)}j(\gamma,\gamma^{-1}\tau_1)^{-2L(0)}.
    \]
    Thus, in terms of our notation $K(\gamma,\tau) = e^{-cj(\gamma,\tau)}j(\gamma,\tau)^{-2L(0)}$, we have
    \begin{align*}
    R(\rho) &= K(\gamma, \tau_2),\\
    R(\rho_z) &= K(\gamma,\gamma^{-1}\tau_1).
    \end{align*}
    
    Putting all this together, Huang's formula is the identity:
    \begin{equation}
        \label{eq:huang1}
        Y(A,\tau_1-\gamma \tau_2) = K(\gamma, \tau_2)Y(K(\gamma,\gamma^{-1}\tau_1)^{-1}A,\gamma^{-1}\tau_1-\tau_2) K(\gamma,\tau_2)^{-1}.
    \end{equation}
    This holds for $\tau_1\neq \gamma \tau_2$.

    Recall that in terms of our new notation, VOA valued modular forms satisfy the transformation law
    \[
      f(\gamma \tau) = j(\gamma,\tau)^kK(\gamma,\tau)f(\tau).
    \]
    In particular, taking $\tau = \gamma^{-1}\tau_1$, this says
    \[
      f(\tau_1) = j(\gamma,\gamma^{-1}\tau_1)^kK(\gamma,\gamma^{-1}\tau_1)f(\gamma^{-1}\tau_1).
    \]
    Thus, if we set $A = f(\tau_1)$ in Huang's formula \eqref{eq:huang1}, where $f$ is a VOA-valued modular form, then we deduce that
        \begin{equation}
        \label{eq:huang2}
        Y(f(\tau_1),\tau_1-\gamma \tau_2) = j(\gamma,\gamma^{-1}\tau_1)^kK(\gamma, \tau_2)Y(f(\gamma^{-1}\tau_1),\gamma^{-1}\tau_1-\tau_2) K(\gamma,\tau_2)^{-1}.
    \end{equation}
    Now, if $g(\tau_2)$ is another VOA-valued modular form, we can multiply on the right by $g(\gamma \tau_2)$ to deduce that
        \begin{equation}
        \label{eq:huang3}
        Y(f(\tau_1),\tau_1-\gamma \tau_2)g(\gamma \tau_2) = j(\gamma, \gamma^{-1}\tau_1)^kj(\gamma,\tau_2)^{\ell}K(\gamma, \tau_2)Y(f(\gamma^{-1}\tau_1),\gamma^{-1}\tau_1-\tau_2) g(\tau_2).
    \end{equation}

    We now complete the proof by comparing the principal parts of both sides of equation \eqref{eq:huang3}. The extra weight-increasing automorphy factors arise by expressing $\gamma^{-1}\tau_1-\tau_2$ on the right hand side of equation \eqref{eq:huang3} in terms of $\tau_1-\gamma\tau_2$.

    Indeed, to simplify notation, if we set $B = f(\gamma^{-1}\tau_1)$, $E = \Endo(V)$, and $T=\tau_1-\gamma\tau_2$, then we find that
    \begin{align*}
    &Y(B,\gamma^{-1}\tau_1-\tau_2)\\
    \equiv& \sum_{n\geq -1} B(n) (\gamma^{-1}\tau_1-\tau_2)^{-n-1}\pmod{T\pseries{E}{T}}\\
    \equiv& \sum_{n\geq -1} B(n) \left(\sum_{m\geq 1} c^{m-1}j(\gamma,\tau_2)^{m+1}T^m\right)^{-n-1}\pmod{T\pseries{E}{T}}\\
    \equiv& \sum_{n\geq -1} j(\gamma,\tau_2)^{-2n-2}B(n)T^{-n-1} \left(\sum_{m\geq 1} \left(cj(\gamma,\tau_2)T\right)^{m-1}\right)^{-n-1}\pmod{T\pseries{E}{T}}\\
    \equiv& \sum_{n\geq -1} j(\gamma,\tau_2)^{-2n-2}B(n)T^{-n-1} \left(1-cj(\gamma,\tau_2)T\right)^{n+1}\pmod{T\pseries{E}{T}}\\
    \equiv& \sum_{n\geq -1}\sum_{m=0}^{n+1}(-1)^{n+1-m}\binom{n+1}{m}c^{n+1-m} j(\gamma,\tau_2)^{-m-n-1}B(n)T^{-m} \pmod{T\pseries{E}{T}}\\
    \equiv& \sum_{m=0}^\infty(-c)^{-m}j(\gamma,\tau_2)^{-m}\left(\sum_{n=m-1}^\infty\binom{n+1}{m}(-c)^{n+1} j(\gamma,\tau_2)^{-n-1}B(n)\right)T^{-m} \pmod{T\pseries{E}{T}}\\
    \equiv& \sum_{m=-1}^\infty j(\gamma,\tau_2)^{-2m-2}\left(\sum_{n=0}^\infty\binom{n+m+1}{m+1}(-c)^{n} j(\gamma,\tau_2)^{-n}B(n+m)\right)T^{-m-1} \pmod{T\pseries{E}{T}}
    \end{align*}
    Hence, combining this with equation \eqref{eq:huang3} we obtain for $m\geq -1$:
    \begin{align}
        \label{eq:prodtransformation}
        &f(\tau_1)(m)g(\gamma\tau_2)\\
        \nonumber =&j(\gamma,\gamma^{-1}\tau_1)^kj(\gamma,\tau_2)^{\ell-2m-2}K(\gamma,\tau_2)\left(\sum_{n=0}^{\infty}\binom{n+m+1}{m+1}(-c)^nj(\gamma,\tau_2)^{-n}f(\gamma^{-1}\tau_1)(n+m)g(\tau_2)\right).
    \end{align}
    Finally, if we replace $\tau_1$ by $\gamma \tau_1$, this proves the proposition.
    \end{proof}

Our next goal is to explain why, for $m\geq -1$, the transformation law above extends to the diagonal $\tau_1=\tau_2$.
\begin{thm}
    \label{t:modes}
    If $f$ and $g$ are $V$-valued modular forms of weights $k$ and $\ell$, respectively, then for $m\geq -1$, the $m$th mode $f(m)g$ defined by
    $(f(m)g)(\tau) = f(\tau)(m)g(\tau)$
    is analytic on $\uhp$ and satisfies
    \begin{align}
        \label{eq:prodtran2}
        &(f(m)g)(\gamma\tau)\\
        \nonumber =&j(\gamma,\tau)^{k+\ell-2m-2}K(\gamma,\tau)\left(\sum_{n=0}^{\infty}\binom{m+n+1}{m+1}(-X)^nf(\tau)(m+n)g(\tau)\right).
    \end{align}
    for all $\tau \in \uhp$ and $\gamma \in \Gamma$. 
    
    In particular, if $f$ and $g$ have conformal degrees $M$ and $N$, respectively, then $f(m)g$ satisfies the transformation law of a $V$-valued quasi-modular function of weight $k+\ell-2m-2$, depth at most $M+N-m-1$, and conformal degree at most $M+N-m-1$
\end{thm}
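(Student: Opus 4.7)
My plan is to prove the theorem in three steps: establish analyticity and the conformal degree bound using formal properties of modes, derive the transformation law by specializing Proposition \ref{p:positivemodes} to the diagonal $\tau_1=\tau_2$, and read off the depth bound from the truncation of the resulting sum. The main technical obstacle is justifying the specialization to the diagonal, since \eqref{eq:prodtran} was established only for $\tau_1 \neq \gamma\tau_2$.

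For analyticity and the conformal degree bound, I would observe that the mode operation restricts to a bilinear map $F_MV \times F_NV \to F_{M+N-m-1}V$, which is polynomial in the coefficients of its two arguments with respect to any fixed bases of these finite-dimensional spaces. Composing with the holomorphic maps $f\colon \uhp \to F_MV$ and $g\colon \uhp \to F_NV$ yields a holomorphic map $f(m)g\colon \uhp \to F_{M+N-m-1}V$, giving both claims at once.

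For the transformation law, the plan is first to rewrite \eqref{eq:prodtran} using the identity $(-c)^n j(\gamma,\tau_2)^{-n} = (-X(\gamma,\tau_2))^n$, so that the two-variable formula already takes the shape of \eqref{eq:prodtran2}. The key observation, which removes the obstacle, is that the apparent infinite sum on the right hand side is actually finite: the term $f(\tau_1)(n+m)g(\tau_2)$ lies in $F_{M+N-m-n-1}V$, which vanishes once $n > M+N-m-1$. Both sides of \eqref{eq:prodtran} are therefore holomorphic functions on all of $\uhp \times \uhp$ --- the two-variable analogue of the previous paragraph handles the left side, and the finite sum handles the right --- and they agree on the complement of the proper complex analytic subvariety $\{(\tau_1,\tau_2) : \tau_1 = \gamma\tau_2\}$. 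By analytic continuation they agree everywhere on $\uhp \times \uhp$, and restricting to the diagonal $\tau_1 = \tau_2 = \tau$ yields \eqref{eq:prodtran2} on all of $\uhp$. The same truncation bound $n \leq M+N-m-1$ on the surviving terms is exactly the depth bound claimed in the statement.
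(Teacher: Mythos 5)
Your proposal is correct and follows essentially the same route as the paper: analyticity of $f(m)g$ via the bilinearity of the mode map on finite-dimensional truncations, extension of the two-variable transformation law of Proposition \ref{p:positivemodes} to the diagonal by analytic continuation, and the depth and conformal-degree bounds from the vanishing of $V_s$ for $s<0$. Your added remark that the sum on the right-hand side truncates (so that both sides are visibly holomorphic on all of $\uhp\times\uhp$) is a useful explicit justification of a point the paper leaves implicit, but it is not a different argument.
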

\begin{proof}
    Since $f$ and $g$ are of finite conformal degree $N$, say, we can regard the $m$th mode intervening in their product as a bilinear map
    \[
    (m) \colon F_NV \times F_NV \to F_{N-m}V.
    \]
    This is a bilinear map between finite dimensional complex vector spaces, and so it is analytic for the canonical complex structure on these spaces. Since $f$ and $g$ are analytic functions $\uhp \to F_NV$, their product likewise defines an analytic map 
    \[f\times g \colon \uhp\times\uhp \to F_NV^2,\] 
    and the composed map:
    \[
    f(m)g\colon \uhp \times \uhp \to F_{N-m}V
    \]
    is therefore complex analytic on all of $\uhp\times \uhp$. The transformation law of Proposition \ref{p:positivemodes}, which was only proved there for $\tau_1\neq \tau_2$, then extends to the diagonal of $\uhp^2$ by analytic continuation. This proves the first part of the result.

    For the statements about depth and conformal degree, observe that the highest degree piece of $f(\tau)(n+m)g(\tau)$ lives in conformal degree at most $M+N-m-n-1$. When $n=0$ this yields the bound on the conformal degree of $f(m)g$. Then, since $V_{s} = 0$ for $s < 0$, it follows that $f(m+n)g=0$ for $n \geq M+N-m$. Hence $f(m+n)g$ has depth at most $M+N-m-1$.
\end{proof}

\begin{rmk}
We emphasize that modes $f(m)g$ are defined for \emph{all} $m \in \ZZ$ if $f,g \in Q(V)$, by Proposition \ref{p:QMVtensor}, and the import of Theorem \ref{t:modes} is the equation \eqref{eq:prodtran2} explaining how the modes $f(m+n)g$ intervene in the quasi-modular transformation law of $f(m)g$ when $m\geq -1$. We do not require Theorem \ref{t:modes} in what follows, which is why we have not given a definitive statement for all integers $m$.
\end{rmk}

%%%%%%%%%%%%%%%%%%%%%%%%%%%
\section{\texorpdfstring{$Q(V)$}{Q(V)} qua QVOA}
\label{s:QVOA}
The purpose of this Section is to describe the algebraic structure of the space $Q(V)$ of quasi-modular forms with values in a QVOA $V$.\ The reader is referred to Appendix \ref{AppB} for further details concerning QVOAs and related structures.

We have already considered $n^{th}$ pointwise products of $V$-valued modular forms \eqref{fngdef} and $V$-valued quasi-modular forms.\ In this way $Q(V)$ is equipped with $n^{th}$ products for all integers $n$, and it is immediate that all of these products are $Q$-linear.\ Furthermore any identity satisfied by the products in $V$ carries  over to $Q(V)$.\ Therefore the Jacobi identity is satisfied in $Q(V)$, and of course the vacuum vector is $1$.\ Finally, for similar reasons the creation property holds in $Q(V)$ and therefore $Q(V)$ is a vertex algebra over $Q$.\ Indeed it is straightforward to see that the map $\iota$ of Theorem \ref{p:QMVtensor}
is an isomorphism of vertex algebras over $Q$ where here we regard the commutative algebra $Q$ as a vertex algebra with vertex operators satisfying 
\[Y(f, z)g\df fg\] 
and $Q\otimes V$ is a tensor product of vertex algebras.

Despite the naturalness of this construction it is \emph{not} so useful for us.\ The reason is that it ignores all considerations of the gradations on $Q$, $V$ and $Q(V)$, and these are fundamental aspects of these structures that we want to keep track of.\ Put another way, if we reintroduce the same map with a different name:
 \begin{equation}
    \label{QV=QVQ}
\iota': Q\otimes V \stackrel{\cong}{\longrightarrow} Q(V),\quad\quad f(\tau)\otimes v \mapsto f(\tau)v
\end{equation}
then we here want to regard $Q\otimes V$ as a tensor product of graded vertex algebras and $\iota'$ as a morphism of such objects.\ The difference between $\iota$ and $\iota'$ is that they are morphisms in different categories.

%%%%%%%%%%%%%%%%%%%%%%%%
\subsection{\texorpdfstring{$Q^{(1/2)}$}{Q-halved} qua QVOA}

We are going to develop the algebraic structure on $Q(V)\cong Q\otimes V^{(2)}$ that incorporates both the vertex algebra structure and the grading.\ It is convenient to begin with $Q$, or rather $Q^{(1/2)}$ (cf.\ Appendix \ref{SSdouble}).\ Here is the main result of this Subsection.

\begin{thm}\label{thmQVA}
 $Q^{(1/2)}$ is the underlying Fock space for a quasi-vertex operator algebra $$(Q^{(1/2)}, Y, 1, \rho).$$  
 ($Y$ and $\rho$ will be described in the course of the proof.)
\end{thm}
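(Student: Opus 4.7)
The plan is to realize $(Q^{(1/2)},Y,\mathbf{1},\rho)$ as a \emph{commutative} quasi-vertex operator algebra, by equipping the commutative $\ZZ_{\geq 0}$-graded algebra $\CC[E_2,E_4,E_6]$ (with $E_{2k}$ placed in halved degree $k$) with the standard commutative-VA vertex operator, and then supplying a compatible $\mathfrak{sl}_2$-action by derivations. Concretely, I would take as vacuum $\mathbf{1}\df 1$, set
\[
L(-1)\df \frac{d}{d\tau},\qquad L(0)\big|_{Q^{(1/2)}_k}\df k\cdot\id,\qquad L(1)\df \frac{12}{2\pi i}\,\frac{\partial}{\partial E_2},
\]
and define the vertex operator by the commutative-VA formula
\[
Y(f,z)g\df \bigl(e^{zL(-1)}f\bigr)\,g.
\]
Each of $L(-1),L(0),L(1)$ acts by a derivation of the commutative product, so the vacuum axiom $Y(\mathbf{1},z)=\id$, the creation axiom $Y(f,z)\mathbf{1}\big|_{z=0}=f$, locality, and the $L(-1)$-translation identity $\tfrac{d}{dz}Y(f,z)=Y(L(-1)f,z)$ are immediate from the standard commutative-VA formalism, once the algebra structure on $Q$ and the chosen $L(-1)$ are fixed.

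The next step is to verify that $\rho$ is an honest $\mathfrak{sl}_2$-representation. The weight relations $[L(0),L(\pm 1)]=\mp L(\pm 1)$ are clear because $L(-1)$ raises, and $L(1)$ lowers, the halved degree by $1$. The heart of the calculation is the bracket
\[
[L(1),L(-1)]=2L(0),
\]
and since both sides act as derivations on $\CC[E_2,E_4,E_6]$ it suffices to check it on the three generators. Invoking Ramanujan's identities
\[
\tfrac{d}{d\tau}E_2=\tfrac{2\pi i}{12}(E_2^2-E_4),\quad \tfrac{d}{d\tau}E_4=\tfrac{2\pi i}{3}(E_2E_4-E_6),\quad \tfrac{d}{d\tau}E_6=\tfrac{2\pi i}{2}(E_2E_6-E_4^2),
\]
one computes $[L(1),L(-1)]E_{2k}=2k\,E_{2k}$ for $k=1,2,3$, matching $2L(0)E_{2k}$ exactly, and the identity then propagates to all of $Q^{(1/2)}$ by the derivation property.

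With the $\mathfrak{sl}_2$-relations in hand, the remaining QVOA compatibility axioms between $\rho$ and $Y$, namely
\[
[L(0),Y(f,z)]=\bigl(z\partial_z+L(0)\bigr)Y(f,z),\qquad [L(1),Y(f,z)]=\bigl(z^2\partial_z+2zL(0)+L(1)\bigr)Y(f,z),
\]
reduce, for the commutative vertex operator $Y(f,z)g=(e^{zL(-1)}f)g$, to computing $[L(0),e^{zL(-1)}]$ and $[L(1),e^{zL(-1)}]$. Both are routine consequences of the brackets $[L(0),L(-1)]=L(-1)$ and $[L(1),L(-1)]=2L(0)$ by induction on powers of $L(-1)$, together with the fact that $L(0)$ and $L(1)$ are derivations of the commutative product $fg$. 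The only genuine obstacle in the whole argument is the $2\pi i$-bookkeeping in verifying $[L(1),L(-1)]=2L(0)$ with exactly the right normalizations; the rest amounts to translating into QVOA language the classical fact that Ramanujan's derivations endow $Q$ with an $\mathfrak{sl}_2$-action.
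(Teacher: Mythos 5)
Your proposal is correct, and it follows the same overall strategy as the paper: equip the commutative graded algebra $Q^{(1/2)}$ with the standard commutative vertex operator $Y(f,z)g=(e^{zL(-1)}f)g$ and supply the $\mathfrak{sl}_2$-action by the Euler operator together with the Ramanujan raising derivation and $\partial/\partial E_2$. The difference lies in how the two compatibility axioms are verified. The paper works mode by mode: it checks translation covariance directly on the modes $a(-n-1)b=\tfrac1{n!}(\theta^n a)b$, and reduces the $L(1)$-axiom $[L(1),a(n)]=(L(1)a)(n)+(2k-n-2)a(n+1)$ to an inductive lemma $\delta\theta^m a=-m(2k+m-1)\theta^{m-1}a+\theta^m\delta a$; it imports the $\mathfrak{sl}_2$ bracket relations from Zagier without recomputation. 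You instead package the whole verification as the conjugation identity
\[
e^{-zL(-1)}L(1)e^{zL(-1)}=L(1)+2zL(0)+z^2L(-1)
\]
in $U(\mathfrak{sl}_2)$, combined with the observation that $[X,\mu_h]=\mu_{Xh}$ whenever $X$ is a derivation of the commutative product; this is cleaner and makes transparent that the only genuine input is $[L(1),L(-1)]=2L(0)$, which you rightly check on the three generators via Ramanujan's identities (both sides being derivations). Your normalization ($L(-1)=d/d\tau$, $L(1)=\tfrac{12}{2\pi i}\partial/\partial E_2$) differs from the paper's ($\theta$ and $-\partial/\partial E_2$) by harmless scalars, and in fact matches the operators $\nabla$ and $\Lambda$ used elsewhere in the paper more closely; your explicit generator check is a worthwhile addition, since the bracket relations are sensitive to exactly these constants. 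Two small points to tidy: state explicitly that the grading axiom $a(n)b\in Q^{(1/2)}_{k+\ell-n-1}$ holds because $L(-1)$ raises halved degree by one (the paper records this), and note that in your displayed bracket formulas the symbols $L(0)$ and $L(1)$ on the right-hand sides should be read as $Y(L(0)f,z)$ and $Y(L(1)f,z)$ rather than as operators post-composed with $Y(f,z)$.
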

\begin{proof}
Recall that $Q^{(1/2)}$ is nothing but a regrading of $Q$:
$$Q^{(1/2)}= \oplus_{k\geq0} Q^{(1/2)}_k, \quad\quad Q^{(1/2)}_k\df Q_{2k},$$
in particular $Q^{(1/2)}$ is a commutative, associative algebra and it admits a graded \emph{derivation} $\theta \df q\frac{d}{dq}$ which is a raising operator of weight $1$.\ Then there is a  vertex algebra $(Q^{(1/2)}, Y, 1, \theta)$ for which
\begin{equation}    
\label{Ydefn}
Y(f, z)\df e^{z\theta }f=\sum_{n\geq0} \frac{z^n}{n!} \theta^nf \quad\quad (f\in Q^{(1/2)}).
\end{equation}
(Cf.\  Example 1 of Appendix \ref{SScanonical}.)\
Thus $n^{th}$ products in $Q^{(1/2)}$ are such that for all $n\geq0$ we have $f(n)g=0$ and
\[f(-n-1)g \df \frac{1}{n!}(\theta^n f)g.\]
Note that from the last display we see that $f(-2)1= \theta f$, so that $\theta$ is the canonical derivation of the vertex algebra.

\medskip
Next we show that this vertex algebra is $\ZZ$-graded.\ The grading is already on display and we have to show that if $f, g$ have conformal degrees $k$ and $\ell$ respectively then $f(n)g$ has conformal degree $k+\ell-n-1$. This is obvious if $n\geq0$.\ Now take $n\geq0$ and consider
$f(-n-1)g$ as given above.\ Since $\theta$ raises weights by $1$ then
$f(-n-1)g$ has conformal degree $k+\ell+n= k+\ell-(-n-1)-1$ as needed.

\medskip
Now we come to the main point: the Lie algebra representation
$$\rho: \mathfrak{sl}_2\rightarrow \Endo(Q^{(1/2)}).$$

This is well-known in the theory of quasi-modular forms where the action of
$\mathfrak{sl}_2$ on $Q$ is described, for example, in \cite{Zagier123}. In the notation of (loc.\ cit.), $Q=\oplus_k Q_{2k}$ admits
a weight $2$ lowering operator $\delta=\partial/\partial E_2$, a weight $2$ raising operator $D=q\frac{d}{dq}$ and the Euler operator $E$.\ The following relations hold:
$$[D, \delta]=E,\quad [E, D]=2D,\quad  [E, \delta]=-2\delta.$$

If we transfer this action to $Q^{(1/2)}$, replace $D$ with $\theta$, retain the notations $\delta, E$ for the lowering  and Euler operators respectively on $Q^{(1/2)}$ (hopefully causing no confusion) we arrive at the definition of $\rho$, viz.,
\begin{equation}
 \label{rhodef}
  \rho: L(1) \mapsto -\delta,\quad  L(0)\mapsto E,\quad L(-1) \mapsto \theta   
\end{equation}
as prescribed in our definition of quasi-vertex operator algebra.

\medskip
Finally we have to verify the following two axioms for $a\in Q_k^{(1/2)}$:
\begin{enumerate}
    \item[(i)] (Translation covariance).\ $(L(-1)a)(n)= -na(n-1)=[L(-1), a(n)]$,
    \item[(ii)] $[L(1), a(n)] = (L(1)a)(n)+(2k-n-2)a(n+1)$, 
\end{enumerate}
where we are using the notation of  \eqref{rhodef}
We start with (i).\
If $n\geq0$ this follows because $a(n)=0$ whenever $n\geq0$.\ Suppose that $n<0$.\ Then
$$(L(-1)a)(n)= (\theta a)(n)=\tfrac{1}{(-n-1)!} \theta^{-n-1}\theta a=-\tfrac{n}{(-n)!}\theta^na=-na(n-1),$$
which is the first equality in (i).\ As for the second,
$$[L(-1), a(n)]b=\theta(a(n)b)-a(n)\theta b=(\theta a(n))b=(L(-1)a)(n)b,$$
where to get the second equality we used the fact that $\theta$, being the canonical derivation of the vertex algebra $Q^{(1/2)}$,  is a derivation of for all products $a(n)b$.\ This completes the proof of translation covariance.

\medskip
It remains to establish (ii).\ As usual the result is obvious if $n\geq0$, so assume that
$n=-m<0$.\ We have
\begin{eqnarray}\label{calc1}
[L(1), a(n)]b = \tfrac{-1}{(m-1)!}[\delta, \theta^{m-1}a]b=\tfrac{-1}{(m-1)!} \delta(\theta^{m-1}a)b       
\end{eqnarray}
where to get the second equality we use the fact that $\delta$ is a derivation of $Q$.\ Next we need the following result which may be proved by an induction:
\begin{lem} If $a\in Q_{2k}$ then 
$$\delta \theta^ma = -m(2k+m-1) \theta^{m-1}a + \theta^m\delta a. $$
$\hfill\Box$
\end{lem}

With this Lemma we see from \eqref{calc1} that
\begin{align*} 
[L(1), a(n)] &= \tfrac{-1}{(m-1)!}\delta(\theta^{m-1}a)\\
&=\tfrac{-1}{(m-1)!} \left\{ ((-(m- 1)(2k+m-2)\theta^{m-2}a +\theta^{m-1}\delta a\right\}  \\
&=  \tfrac{2k+m-2}{(m-2)!} \theta^{m-2}a - \tfrac{1}{(m-1)!}\theta^{m-1}\delta a \\
&= (2k-n-2)a(n+1)  + (L(1) a)(n).
\end{align*}

This shows that condition (ii) is indeed satisfied and the proof of the Theorem is complete.
\end{proof}

%%%%%%%%%%%%%%%%%%
\subsection{Passage to \texorpdfstring{$Q(V)$}{Q(V)}} In this Subsection $Q^{(1/2)}$ denotes the QVOA
described in Theorem \ref{thmQVA}.\ We can form tensor products of QVOAs, cf.\ Appendix \ref{SScat}.\ In particular for a QVOA $V$ we have the QVOA
\begin{align}
\label{QVOATP}
  Q^{(1/2)}\otimes V  &\df (Q^{(1/2)}, Y, 1, \rho)\otimes (V, Y, \mathbf{1}, \rho')\notag\\
  &=(Q^{(1/2)}\otimes V, Y\otimes Y, 1\otimes\mathbf{1}, \rho\otimes\rho').
\end{align}

If we let (hopefully without introducing further confusion) $L(0)$ and $L(\pm 1)$ be the usual Virasoro operators on $V$ then the corresponding Euler, raising and lowering operators on $Q^{(1/2)}\otimes V$ are respectively
\begin{eqnarray}\label{sl2ops}
 && E\otimes \Id + \Id\otimes L(0), \notag \\
 && \theta\otimes \Id + \Id\otimes L(-1),\\
 &&\delta\otimes \Id + \Id\otimes L(1).\notag
\end{eqnarray}

One easily sees that $1\otimes V$ and $Q^{(1/2)}\otimes \mathbf{1}$ are subQVOAs of the tensor product, that the latter is a commutative algebra isomorphic to $Q^{(1/2)}$ with respect to the $-1$ product, and that the latter is a module over the former again with respect to the $-1$ product.\ Thus the perspective of (\ref{QV=QVQ}) is not lost; it can be recovered from the structure of $Q^{(1/2)}\otimes V$ as a QVOA.

\medskip
Finally we simply note that by doubling the grading of $Q^{(1/2)}\otimes V$ we get $Q\otimes V^{(2)}$  which we identify with $Q(V)$ by way of Theorem \ref{p:QMVtensor}.\ Thus we have
\begin{thm}
Let $V$ be a QVOA.\ Then the space
of $V$-valued quasi-modular forms $Q(V)$
has the structure of a doubled quasi-vertex operator algebra.\ $\hfill\Box$
\end{thm}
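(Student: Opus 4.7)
The plan is to assemble the QVOA structure on $Q(V)$ by combining the pieces already built in this section: Theorem \ref{thmQVA} gives us the QVOA $Q^{(1/2)}$, Appendix \ref{AppB} provides a tensor product operation on the category of QVOAs, and Theorem \ref{p:QMVtensor} provides the identification $Q(V) \cong Q \otimes V^{(2)}$ as graded vector spaces. Thus, no fundamentally new construction is needed; one only has to show that these compatibilities line up correctly with the pointwise vertex algebra structure on $Q(V)$.

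First I would invoke Theorem \ref{thmQVA} together with the tensor product construction of Appendix \ref{AppB} to form the QVOA $Q^{(1/2)} \otimes V$ described in \eqref{QVOATP}, with the $\mathfrak{sl}_2$-action given by \eqref{sl2ops}. Next I would apply the grading-doubling functor on QVOAs (which is a well-defined endofunctor since it merely rescales $L(0)$ and the grading, and preserves the Jacobi identity, vacuum, creation property, and the $\mathfrak{sl}_2$-relations with appropriate weight rescalings). On the underlying vector space this yields $Q \otimes V^{(2)}$ with the tensor product grading. Finally I would transport this structure through the graded isomorphism $\iota \colon Q \otimes V^{(2)} \xrightarrow{\cong} Q(V)$ of Theorem \ref{p:QMVtensor}. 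The key compatibility to check is that $\iota$ respects the $n$-th products: on pure tensors $f \otimes v$ and $g \otimes w$ the tensor product vertex operation combined with $Q$-bilinearity reduces, after a short calculation, to the pointwise mode $(fg)(\tau)(n)(vw)$ evaluated on the diagonal, which is exactly the pointwise mode structure defined in Section \ref{ss:modes} on $Q(V) \subseteq \mathrm{Map}(\uhp, V)$. Since $\iota$ preserves gradings and the vacuum $1 \otimes \mathbf{1} \mapsto 1$, the QVOA axioms transport across.

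The main obstacle will be confirming that the $\mathfrak{sl}_2$-action transports correctly under $\iota$ after grading doubling. That is, the operators in \eqref{sl2ops} must act on $Q(V)$ as the raising operator coming from the connection of Section \ref{s:connection}, the lowering operator $\Lambda$ defined in the introduction (up to the factor $\tfrac{12}{2\pi i}$ absorbing the normalization of $\theta = q\tfrac{d}{dq}$ versus $\tfrac{\partial}{\partial E_2}$), and the weight operator, respectively. Once one observes that $\theta$ on $Q^{(1/2)}$ corresponds under $\iota$ to pointwise differentiation by $\tau$ on coefficients, and that $L(-1)$ on $V$ is the fiberwise generator of the connection, the raising operator matches the total modular derivative coming from $\nabla$; dually, the lowering operator matches $\Lambda$ by construction of $\delta = \partial/\partial E_2$. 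With these identifications, the axioms of a doubled QVOA on $Q(V)$ follow directly from those on $Q^{(1/2)} \otimes V$, completing the proof.
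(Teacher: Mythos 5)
Your overall skeleton --- build $Q^{(1/2)}$ as a QVOA (Theorem \ref{thmQVA}), tensor with $V$ using the coproduct of Appendix \ref{SScat}, double the grading to get $Q\otimes V^{(2)}$, and transport the structure through the isomorphism $\iota$ of Theorem \ref{p:QMVtensor} --- is exactly the paper's route, and it does produce a doubled QVOA structure on $Q(V)$, which is all the theorem literally asserts.

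However, the step you single out as ``the key compatibility to check'' is false. The tensor product modes on $Q^{(1/2)}\otimes V$ are $(f\otimes v)(n)(g\otimes w)=\sum_{i+j+1=n}f(i)g\otimes v(j)w$, and since in the QVOA $Q^{(1/2)}$ one has $f(i)g=0$ for $i\geq 0$ and $f(-r-1)g=\tfrac{1}{r!}(\theta^{r}f)g$, this equals $\sum_{r\geq 0}\tfrac{1}{r!}(\theta^{r}f)g\otimes v(n+r)w$. Only the $r=0$ term is the pointwise mode $fg\otimes v(n)w$ of Section \ref{ss:modes}; the correction terms with $r\geq 1$ do not vanish in general. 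For instance, in the Heisenberg algebra $S$ take $f=E_{2}$, $g=1$, $v=w=h$, $n=0$: the pointwise mode is $E_{2}\,h(0)h=0$, while the tensor product mode contains $\theta(E_{2})\otimes h(1)h=\theta(E_{2})\mathbf{1}\neq 0$. This is precisely the distinction the paper draws between $\iota$ and $\iota'$: the pointwise structure corresponds to viewing $Q$ as the commutative vertex algebra with $Y(f,z)g=fg$ and \emph{trivial} derivation, which cannot carry the required $\mathfrak{sl}_2$ (its raising operator would be zero), whereas the graded QVOA of the theorem necessarily uses $Y(f,z)=e^{z\theta}f$ and therefore has genuinely different negative modes. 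Relatedly, your identification of the diagonal lowering operator $\delta\otimes\Id+\Id\otimes L(1)$ of \eqref{sl2ops} with $\Lambda=\tfrac{12}{2\pi i}\,\delta\otimes\Id+\Id\otimes L(1)$ is not correct: the scalar $\tfrac{12}{2\pi i}$ multiplies only one summand, so $\Lambda$ is not a rescaling of the diagonal operator, and the paper states explicitly in Section \ref{s:operators} that $\Lambda$ (like $P$) lies in the ambient $\mathfrak{sl}_2\oplus\mathfrak{sl}_2$ but \emph{not} in the diagonal $\mathfrak{sl}_2$ defining the QVOA structure. If you delete these two claimed identifications, what remains is the paper's proof and is correct; as written, both verifications would fail.
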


\section{Some additional operators on \texorpdfstring{$Q(V)$}{Q(V)}}
\label{s:operators}
 The tensor product $Q^{(1/2)}\otimes V$, or $Q\otimes V^{(2)}$, admits more than just the  `diagonal' $\mathfrak{sl}_2$-action used to define the QVOA structure in the previous Section.\ It admits an action of $\mathfrak{sl}_2\oplus\mathfrak{sl}_2$.\ We shall make use of some of these additional operators in the following Sections.\
 They will turn out to be crucial to our cause, which is now to understand the embedding
 $$M(V)\subseteq Q(V).$$
We have already seen that $Q(V)$ is closed under the following derivative operators:
\begin{align*}
    \theta &= qd/dq,\\
    D_k &= \theta - \tfrac{k}{12}E_2,\\
    \nabla_k &= 2\pi i D_k +L(-1),
\end{align*}
though only the last of these leaves $M(V)$ invariant, cf. Subsection \ref{ss:nablaproof}. All of these operators raise conformal degrees by $2$.

%%%%%%%%%%%%%%%%%%%%%
\subsection{The lowering operator and the main Theorem}
 \label{ss:lowering}
 We define a lowering operator $\Lambda$ on $Q\otimes V^{(2)}$ as follows:
$$\Lambda = \frac{12}{2\pi i}\delta\otimes \Id+\Id\otimes L(1).$$

\medskip
In this Section we will mainly be viewing $Q(V)$ as functions on the upper half-plane. Then $\Lambda$ can be expressed more simply as:
\begin{equation}
    \label{eq:loweringoperator}
    \Lambda = \frac{12}{2\pi i} \delta + L(1) = \frac{12}{2\pi i }\frac{\partial}{\partial E_2} +L(1).
\end{equation}

Among our main results are the next Theorem and its Corollary, which provides a very useful characterization of $M(V)$ as the kernel of $\Lambda$.
\begin{thm}\label{thmXLambda}
Suppose that $f(\tau)\in Q_k(V)$.\ Then
\begin{eqnarray}\label{Lambdaact}
f||_k\gamma(\tau) = \exp(X\Lambda)f(\tau).
\end{eqnarray}
\end{thm}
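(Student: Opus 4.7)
The plan is to reduce to a pure tensor via Theorem \ref{p:QMVtensor}, handle the scalar and VOA factors separately, and then observe that the resulting operators commute and exponentiate independently. Write $f = \sum_i g_i v_i$ with $g_i \in Q_{k-2m_i}$ and $v_i \in V_{m_i}$ under the isomorphism $Q(V) \cong Q \otimes V^{(2)}$. By $\CC$-linearity, it suffices to prove \eqref{Lambdaact} when $f = g \cdot v$ is a single pure tensor with $g \in Q_{k-2m}$ and $v \in V_m$.

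For such an $f$, compute directly from the definition of $||_k$ and the formula $K(\gamma,\tau)^{-1} = j(\gamma,\tau)^{2L(0)} e^{cj(\gamma,\tau)L(1)}$ for $\gamma \in \Gamma$. Since $L(1)^r v \in V_{m-r}$, the same calculation that appeared in Example \ref{ex:constantsection}, namely
\[
j(\gamma,\tau)^{2L(0)} e^{cj(\gamma,\tau)L(1)} v \;=\; j(\gamma,\tau)^{2m}\sum_{r\geq 0}\frac{X^r}{r!} L(1)^r v \;=\; j(\gamma,\tau)^{2m}\, e^{XL(1)} v,
\]
lets us factor the transformation as
\[
f||_k\gamma(\tau) \;=\; j(\gamma,\tau)^{2m-k} g(\gamma\tau)\cdot e^{XL(1)} v \;=\; \bigl(g|_{k-2m}\gamma\bigr)(\tau)\cdot e^{XL(1)} v.
\]

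The second ingredient is the scalar analogue of \eqref{Lambdaact}: for $g \in Q_\ell$,
\[
g|_\ell\gamma \;=\; \exp\!\left(\tfrac{12X}{2\pi i}\,\delta\right) g.
\]
To prove this, decompose $g$ as a $\CC$-linear combination of monomials $h\,E_2^j$ with $h \in M_{\ell-2j}$, exploiting $Q = M[E_2]$. Since $h$ is modular we have $h|_{\ell-2j}\gamma = h$ and $\delta h = 0$, while the classical transformation $E_2|_2\gamma = E_2 + \tfrac{12X}{2\pi i}$ from \eqref{eq:E2modulartransformation} together with the binomial theorem gives
\[
(hE_2^j)|_\ell\gamma = h\bigl(E_2 + \tfrac{12X}{2\pi i}\bigr)^j = \sum_{r\geq 0}\frac{1}{r!}\bigl(\tfrac{12X}{2\pi i}\bigr)^r \delta^r(hE_2^j),
\]
which is exactly $\exp(\tfrac{12X}{2\pi i}\delta)(hE_2^j)$.

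To finish, note that $\delta$ acts only on the $Q$-tensor factor and $L(1)$ only on the $V$-tensor factor, so they commute. Hence
\[
\exp(X\Lambda) \;=\; \exp\!\left(\tfrac{12X}{2\pi i}\,\delta\right)\exp\!\left(X L(1)\right),
\]
and applying this to $g\otimes v$ reproduces $g|_{k-2m}\gamma \cdot e^{XL(1)} v = f||_k\gamma$, as required. The main obstacle is the scalar identity in the third paragraph: this is a standard fact in the theory of quasi-modular forms, but it needs to be set up carefully because it is the step that genuinely uses the characterization of $Q$ as $M[E_2]$ and the modular anomaly of $E_2$.
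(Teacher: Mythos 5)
Your proof is correct, and it uses the same essential ingredients as the paper's argument --- the decomposition of $Q(V)$ via $Q=M[E_2]$ tensored with $V$, the $E_2$ anomaly \eqref{eq:E2modulartransformation}, and the finite expansion of $e^{cj(\gamma,\tau)L(1)}$ into powers of $X$ --- but it packages them differently. The paper expands \emph{both} sides of \eqref{Lambdaact} in the monomial basis $E_2^m f_{m,\ell}v_{m,\ell}$ (with $f_{m,\ell}\in M_{2\ell}$) and compares the resulting multiple sums term by term; you instead (i) factor $f||_k\gamma$ on a pure tensor $g\cdot v$ as $(g|_{k-2m}\gamma)\,e^{XL(1)}v$, (ii) isolate the scalar identity $g|_\ell\gamma=\exp(\tfrac{12X}{2\pi i}\delta)g$ as a standalone lemma, and (iii) exploit the fact that $\delta\otimes 1$ and $1\otimes L(1)$ commute to split $\exp(X\Lambda)$ into a product of two exponentials. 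This is a cleaner modularization: the commuting-exponential factorization is only implicit in the paper's proof (hidden in the binomial expansion of $(\tfrac{12X}{2\pi i}\delta+XL(1))^s$), and your route avoids the double-sided sum comparison, which is the most error-prone step of the paper's computation. All the individual steps check out: $K(\gamma,\tau)^{-1}v=j(\gamma,\tau)^{2m}e^{XL(1)}v$ for $v\in V_m$ since $L(1)^rv\in V_{m-r}$ and $X=c/j(\gamma,\tau)$; the scalar lemma follows from $\delta(M)=0$ and $(hE_2^j)|_\ell\gamma=h\,(E_2+\tfrac{12X}{2\pi i})^j$; and both sides of \eqref{Lambdaact} are $\CC$-linear in $f$, so the reduction to pure tensors via Theorem \ref{p:QMVtensor} is legitimate.
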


Before proving this theorem, let us first record some important corollaries:
\begin{cor}\label{cokerL} We have
$$M(V)=\ker\Lambda.$$
\end{cor}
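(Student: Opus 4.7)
The plan is to establish the two inclusions $\ker\Lambda \subseteq M(V)$ and $M(V) \subseteq \ker\Lambda$ separately; both follow directly from Theorem \ref{thmXLambda}.

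For the easy direction $\ker\Lambda \subseteq M(V)$, suppose $f \in Q_k(V)$ satisfies $\Lambda f = 0$. Then $\Lambda^n f = 0$ for every $n \geq 1$, so $\exp(X\Lambda)f = f$ regardless of the value of $X$. Theorem \ref{thmXLambda} then gives $f||_k\gamma = f$ for every $\gamma \in \Gamma$, and since holomorphy on $\uhp$ and at the cusp is already built into membership in $Q_k(V)$, we conclude $f \in M_k(V)$.

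For the reverse inclusion $M(V) \subseteq \ker\Lambda$, take $f \in M_k(V) \subseteq Q_k(V)$. Modularity together with Theorem \ref{thmXLambda} gives
\[
\sum_{n\geq 1}\frac{X(\gamma,\tau)^n}{n!}\,\Lambda^n f(\tau) \;=\; 0
\]
for every $\gamma\in\Gamma$ and every $\tau\in\uhp$. The first step is to observe that this sum is finite: both summands of $\Lambda=\tfrac{12}{2\pi i}\delta\otimes 1+1\otimes L(1)$ strictly lower weight on their respective tensor factors, and since $f$ has bounded conformal degree and is polynomial in $E_2$, iterating $\Lambda$ eventually annihilates it. Hence for each fixed $\tau$, $p_\tau(s)\df\sum_{n\geq 1}\tfrac{s^n}{n!}\Lambda^n f(\tau)$ is an honest polynomial in the scalar $s$. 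To force its vanishing I would specialize $\gamma = ST^m = \stwomat{0}{-1}{1}{m}$ for $m\in\ZZ$, which gives $X(\gamma,\tau)=1/(\tau+m)$, values that are pairwise distinct as $m$ varies. Thus $p_\tau$ has infinitely many zeros, so $p_\tau\equiv 0$; reading off its coefficient in degree one yields $\Lambda f(\tau)=0$, and since $\tau$ was arbitrary, $\Lambda f\equiv 0$.

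The only place where I expect any genuine care to be required is in the verification that the exponential series terminates, i.e., that $\Lambda$ acts locally nilpotently on $Q(V)$; once this is confirmed from the conformal filtration and the boundedness-below property of a QVOA, the rest is a short polynomial-with-infinitely-many-roots argument and I do not anticipate any substantive obstacle.
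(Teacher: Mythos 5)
Your proof is correct and follows essentially the same route as the paper: both directions are read off from Theorem \ref{thmXLambda}, with the reverse inclusion reduced to showing that $\exp(X\Lambda)f=f$ for all $\gamma$ forces $\Lambda f=0$. The paper compresses that last step into the remark that $\Lambda$ lowers weights by $2$; your version merely makes explicit the underlying standard argument (local nilpotence of $\Lambda$ plus the fact that $X(\gamma,\tau)$ takes infinitely many values as $\gamma$ varies), which is a fair and correct filling-in rather than a different method.
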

\begin{proof}
By definition, $f\in M(V)$ if, and only if, $f||_k\gamma(\tau)=f$.\ So by \eqref{Lambdaact} $f\in M(V)$ if, and only if, $\exp(X\Lambda)f(\tau)=f(\tau)$.\ Since $\Lambda$ lowers weights by $2$ this last equality holds if, and only if, $\Lambda f=0$ and the Corollary is proved.
\end{proof}
\begin{cor}
    Suppose that
    $$f||_k\gamma = \sum_{n\geq 0} X^nQ_n(f)$$
    for $V$-valued holomorphic functions $Q_n(f)$.\ Then
    \begin{eqnarray}\label{Qform}
     Q_n(f)= \frac{1}{n!}\Lambda^n(f).   
    \end{eqnarray}
\end{cor}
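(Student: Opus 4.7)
The plan is to derive this as an immediate consequence of Theorem \ref{thmXLambda}. That theorem already identifies the modular twist $f||_k\gamma$ with the exponential $\exp(X\Lambda)f$, so the corollary should amount to expanding the exponential and matching coefficients of $X^n$.

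First I would write
\[
f||_k\gamma(\tau) = \exp(X\Lambda)f(\tau) = \sum_{n\geq 0}\frac{X^n}{n!}\Lambda^n f(\tau).
\]
The next step is to observe that this sum is actually finite: since $f \in Q_k(V)$ has bounded conformal degree (taking values in some $F_N V$) and $\Lambda = \tfrac{12}{2\pi i}\delta\otimes \Id + \Id\otimes L(1)$ is a weight-lowering operator on $Q\otimes V^{(2)}$ (lowering the total weight by $2$), both $\delta$ and $L(1)$ annihilate low-weight pieces, so $\Lambda^n f = 0$ for sufficiently large $n$. Hence the right-hand side is a polynomial in $X$ with $V$-valued coefficients $\tfrac{1}{n!}\Lambda^n f(\tau)$.

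On the other hand, by hypothesis $f||_k\gamma$ admits the expansion $\sum_{n\geq 0} X^n Q_n(f)$. To conclude $Q_n(f) = \tfrac{1}{n!}\Lambda^n f$, I would invoke uniqueness of the coefficients in such a quasi-modular expansion. The cleanest way to see this is to note that $\Lambda$ lowers weight by $2$, so under the identification $Q(V) \cong Q\otimes V^{(2)}$ of Theorem \ref{p:QMVtensor}, the term $\tfrac{1}{n!}\Lambda^n f$ lies in $Q_{k-2n}(V)$. Since both expansions match as polynomials in $X$, and since as $\gamma$ ranges over $\Gamma$ the values $X(\gamma,\tau) = c/(c\tau+d)$ take infinitely many distinct values at a fixed $\tau$, the polynomial identity in $X$ forces coefficient-wise equality.

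The main (and really only) obstacle is the uniqueness argument; Theorem \ref{thmXLambda} does the heavy lifting. If one prefers an inductive argument in place of varying $\gamma$, one can extract $Q_n(f)$ one at a time from the highest order in $X$ downward: the top term $Q_s(f)$ is determined as the coefficient of the highest power of $X$ appearing, and subtracting $X^s\cdot \tfrac{1}{s!}\Lambda^s f$ from both expansions reduces the depth by one, allowing induction on $s$. Either way, the result follows with minimal extra calculation.
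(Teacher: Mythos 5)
Your proof is correct and follows exactly the route the paper intends: the corollary is stated as an immediate consequence of Theorem \ref{thmXLambda}, obtained by expanding $\exp(X\Lambda)f=\sum_n \tfrac{X^n}{n!}\Lambda^n f$ and matching coefficients against $\sum_n X^nQ_n(f)$. Your extra care about finiteness of the sum and uniqueness of the coefficients (via the infinitely many values of $X(\gamma,\tau)$ at fixed $\tau$) fills in the details the paper leaves implicit, and both points are handled correctly.
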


\begin{rmk}
 Equation \eqref{Qform} is the VOA analog of the well-known result for scalar-valued quasi-modular forms that if $g\in Q_k$ satisfies
    $g|_k\gamma = \sum_{n\geq0} X^nQ_n(f)$ then we have
    $$Q_n(f)=\frac{1}{n!}\partial^n f/\partial E_2^n.$$
\end{rmk}

\begin{proof}[Proof of Theorem \ref{thmXLambda}:]
Since $f\in Q_k(V)$ with $k=2n$ then there is an equality of the form
\begin{eqnarray}\label{fdef}
f=\sum_{m=0}^n E_2^m\sum_{\ell=0}^{n-m} f_{m,\ell}v_{m, \ell},
\end{eqnarray}
where $f_{m, \ell}\in M_{2\ell}$ and $v_{m, \ell}\in V_{n-m-\ell}$.\ Let us write $j$ in place of $j(\gamma, \tau)$.\ Then
\begin{align*}
  f||_k\gamma(\tau) &=j^{-k}K(\gamma, \tau)^{-1}f(\gamma\tau)\\
  &=j^{-k+2L(0)}e^{cjL(1)}f(\gamma\tau)\\
  &=j^{-k+2L(0)}e^{cjL(1)}\sum_{m, \ell}E_2^m(\gamma\tau)f_{m, \ell}(\gamma\tau)v_{m, \ell}\\
  &=j^{-k+2L(0)}\sum_{r\geq 0} \frac{c^rj^r}{r!}L(1)^r\sum_{m, \ell}E_2^m(\gamma\tau)f_{m, \ell}(\gamma\tau)v_{m, \ell}\\
  &=j^{-k+2L(0)}\sum_{r\geq 0} \frac{c^rj^r}{r!}\sum_{m, \ell}E_2^m(\gamma\tau)f_{m, \ell}(\gamma\tau)L(1)^rv_{m, \ell}\\
  &=\sum_{r\geq0}\sum_{m, \ell}j^{-k+2(n-m-\ell-r)+r}\frac{c^r}{r!}E_2^m(\gamma\tau)f_{m, \ell}(\gamma\tau)L(1)^rv_{m, \ell}\\
  &=\sum_{m, \ell}j^{-2m-2\ell-r}\sum_{r\geq 0} \frac{c^r}{r!}E_2^m(\gamma\tau)f_{m, \ell}(\gamma\tau)L(1)^rv_{m, \ell}\\
  &=\sum_{r\geq0}j^{-r} \frac{c^r}{r!}\sum_{m, \ell}(j^{-2}E_2(\gamma\tau))^m(j^{-2\ell}f_{m, \ell}(\gamma\tau))L(1)^rv_{m, \ell}\\
  &=\sum_{r\geq0} \frac{(XL(1))^r}{r!}\sum_m \left(E_2(\tau)+\frac{12X}{2\pi i}\right)^m \sum_{\ell}f_{m, \ell}(\tau)v_{m, \ell}.
\end{align*}

On the other hand we have
\begin{align*}
\exp(X\Lambda)f(\tau)&=\sum_{s\geq0} \frac{X^s\Lambda^s}{s!}
\sum_{m, \ell} E_2^mf_{m, \ell} v_{m, \ell}\\
&=\sum_{s\geq0} \frac{1}{s!}\left(\frac{12X}{2\pi i}\partial/\partial E_2 +XL(1)\right)^s\sum_{m, \ell} E_2^mf_{m, \ell} v_{m, \ell}\\
&=\sum_{s\geq0} \frac{1}{s!}\sum_{t=0}^s \binom{s}{t}\left(\frac{12X}{2\pi i}\partial/\partial E_2 \right)^{s-t}
(XL(1))^{t}\sum_{m, \ell} E_2^mf_{m, \ell} v_{m, \ell}\\
&=\sum_{s\geq0} \frac{1}{s!}\sum_{t=0}^s \binom{s}{t} \sum_{m, \ell}\frac{m!}{(m-s+t)!}\left(\frac{12X}{2\pi i}\right)^{s-t}
(XL(1))^{t}E_2^{m-s+t}f_{m, \ell} v_{m, \ell}\\
&=\sum_{s\geq0}\sum_{m, \ell, t}\frac{1}{t!}\binom{m}{s-t}
\left(\frac{12X}{2\pi i}\right)^{s-t}(XL(1))^{t}E_2^{m-s+t}f_{m, \ell} v_{m, \ell}\\
&=\sum_{t\geq0}\sum_{m, \ell}\frac{(XL(1))^t}{t!}
\left(E_2 +\frac{12X}{2\pi i}    \right)^m f_{m, \ell}v_{m, \ell}.
\end{align*}

A comparison of these expressions completes the  proof of the
Theorem.
\end{proof}

%%%%%%%%%%%%%%%%%%%%%%%%%%%%
\subsection{Surjectivity of the lowering operator}
We address the question of whether $\Lambda:Q(V)\rightarrow Q(V)$ is a surjective map. This is the analog of asking whether $L(1)$ is a surjective map on $V$. The answer for $V$, described in \cite{DLMsl2} and recalled below in Lemma \ref{lemL1surj}, depends on technical properties of $V$.

\medskip
We start with a simple observation.
\begin{lem}\label{lemineq} For any QVOA $V$ we  have
$$\dim Q_{2k+2}(V)-\dim Q_{2k}(V) \leq \dim M_{2k+2}(V).$$
Equality holds if, and only if,
$$\Lambda:Q_{2k+2}(V)\rightarrow Q_{2k}(V)$$
is a surjection.
\end{lem}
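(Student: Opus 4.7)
The plan is a straightforward rank--nullity argument using the kernel description already established in Corollary \ref{cokerL}. First I would check that $\Lambda$ restricts to a linear map $Q_{2k+2}(V) \to Q_{2k}(V)$: since $\delta=\partial/\partial E_2$ lowers the weight of a quasi-modular form by $2$ and $L(1)$ lowers the conformal degree on $V^{(2)}$ by $2$ (recall the doubled grading on $V^{(2)}$), the operator $\Lambda = \tfrac{12}{2\pi i}\delta \otimes 1 + 1\otimes L(1)$ lowers the total grading on $Q\otimes V^{(2)} \cong Q(V)$ by exactly $2$.

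Next I would observe that each graded piece $Q_{2k}(V)$ is finite-dimensional. By Theorem \ref{p:QMVtensor},
\[
Q_{2k+2}(V) \;\cong\; \bigoplus_{m\geq 0} Q_{2k+2-2m}\otimes V_m,
\]
and since $V$ is assumed to be of CFT-type the sum only runs over $0 \leq m \leq k+1$, each $Q_{2k+2-2m}$ is finite-dimensional, and each $V_m$ is finite-dimensional (cf.\ Appendix \ref{AppB}). Hence $Q_{2k+2}(V)$ is finite-dimensional, and likewise for $Q_{2k}(V)$.

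Now I would apply the rank--nullity theorem to the restriction
\[
\Lambda|_{Q_{2k+2}(V)}\colon Q_{2k+2}(V)\longrightarrow Q_{2k}(V).
\]
By Corollary \ref{cokerL}, the kernel of $\Lambda$ on all of $Q(V)$ is $M(V)$, and restricting to weight $2k+2$ gives $\ker(\Lambda|_{Q_{2k+2}(V)}) = M_{2k+2}(V)$. Therefore
\[
\dim Q_{2k+2}(V) \;=\; \dim M_{2k+2}(V) + \dim\operatorname{im}\bigl(\Lambda|_{Q_{2k+2}(V)}\bigr).
\]
Since the image sits inside $Q_{2k}(V)$, we conclude
\[
\dim Q_{2k+2}(V) - \dim Q_{2k}(V) \;\leq\; \dim M_{2k+2}(V),
\]
with equality precisely when $\operatorname{im}(\Lambda|_{Q_{2k+2}(V)}) = Q_{2k}(V)$, i.e.\ when $\Lambda:Q_{2k+2}(V)\to Q_{2k}(V)$ is surjective.

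There is no substantive obstacle here: the content of the lemma lies entirely in the kernel computation of Corollary \ref{cokerL} and the tensor decomposition of Theorem \ref{p:QMVtensor}, both already in hand. The real work (and the more delicate question of when $\Lambda$ is actually surjective onto each $Q_{2k}(V)$) lies in the subsequent discussion, not in this lemma.
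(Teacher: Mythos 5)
Your proposal is correct and is essentially the paper's own argument: both rest on the fact that $\Lambda$ lowers weight by $2$, so rank--nullity applied to $\Lambda|_{Q_{2k+2}(V)}$ together with the kernel identification $\ker\Lambda = M(V)$ from Corollary \ref{cokerL} gives the inequality and the equality criterion. One small remark: you invoke the CFT-type hypothesis to justify finite-dimensionality of the graded pieces, but the lemma is stated for an arbitrary QVOA, and finiteness already follows from the grading axioms ($\dim V_n<\infty$ and $V_n=0$ for $n\ll 0$) together with $Q_{2j}=0$ for $j<0$.
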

\begin{proof}
  Since $\Lambda$ lowers weights by $2$ then there is a linear isomorphism
  $$\Lambda(Q_{2k+2}(V))\cong Q_{2k+2}(V)/M_{2k+2}(V).$$
  This holds because, by Corollary \ref{cokerL}, $M_{2k+2}(V)=Q_{2k+2}(V)\cap \ker\Lambda.$\ Thus
  \begin{eqnarray*}
   \dim Q_{2k}(V)\geq \dim \Lambda(Q_{2k+2}(V))=\dim Q_{2k+2}(V)-\dim M_{2k+2}(V),   
  \end{eqnarray*}
  and both parts of the Lemma now follow.
\end{proof}

As preparation for the next result we state the Proposition from Section 2.4 of \cite{DLMsl2} mentioned above.\ Although the result likely holds for any QVOA, we state it only when $V$ is a VOA because that is the assumption of  \textit{loc.\ cit.}
\begin{lem}\label{lemL1surj} For any VOA $V$, the map
$$L(1): V_{k+1}\rightarrow V_k$$
is a \textit{surjection} unless perhaps $k=0$.
$\hfill \Box$
\end{lem}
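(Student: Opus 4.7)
The plan is to construct, for each $v \in V_k$ with $k \geq 1$, an explicit preimage $w \in V_{k+1}$ satisfying $L(1) w = v$, working entirely inside the Virasoro $\mathfrak{sl}_2$-triple $\{L(-1), L(0), L(1)\}$ and using only the CFT-type hypothesis on $V$. No duality or invariant bilinear form is required.

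The starting observation is that $L(1)$ is locally nilpotent on $V$: since $V_n = 0$ for $n < 0$, for any $v \in V_k$ one has $L(1)^{k+1} v \in V_{-1} = 0$. Set $u_j \df L(1)^j v$ and let $N \leq k$ be maximal with $u_N \neq 0$; then $u_N \in V_{k-N}$ is quasi-primary of weight $k - N$. A short induction using $[L(1), L(-1)] = 2L(0)$ and $[L(0), L(-1)] = L(-1)$ yields the key identity: for any $u \in V_m$ and $j \geq 0$,
\[
L(1) L(-1)^{j+1} u = L(-1)^{j+1} L(1) u + (j+1)(2m + j)\, L(-1)^{j} u.
\]

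I will look for the preimage in the form $w = \sum_{j=0}^{N} c_j\, L(-1)^{j+1} u_j \in V_{k+1}$. Applying $L(1)$ term-by-term and using the identity with $m = k - j$, the equation $L(1) w = v$ collapses to a triangular linear system in the $c_j$: matching the coefficient of $u_0 = v$ forces $c_0 = 1/(2k)$, and for $1 \leq j \leq N$ the internal compatibility relations give the recursion
\[
c_j = -\frac{c_{j-1}}{(j+1)(2k-j)}.
\]
The would-be constraint at level $j = N+1$ is automatic because $u_{N+1} = 0$, which also explains why the ansatz has finitely many terms.

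The only real point to verify is that no denominator vanishes. For $0 \leq j \leq N$ one has $j + 1 \geq 1$, and since $j \leq N \leq k$ also $2k - j \geq k > 0$, where the last inequality is precisely the assumption $k \geq 1$. This completes the construction of $w$ and simultaneously pinpoints where the hypothesis is indispensable: the exceptional case $k = 0$ fails at the very first step, as one would need to invert $2k = 0$, reflecting the fact that $L(1) \colon V_1 \to V_0$ can genuinely be the zero map for some VOAs (for instance whenever $V$ admits a nondegenerate invariant bilinear form).
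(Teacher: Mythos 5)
The paper never proves this lemma --- it is quoted from Section 2.4 of \cite{DLMsl2} and closed with a $\Box$ --- so there is no internal argument to compare against; what you have supplied is a self-contained elementary proof, and it is correct as far as it goes. Your commutator identity checks out: from $[L(1),L(-1)]=2L(0)$ and $[L(0),L(-1)]=L(-1)$ one gets $[L(1),L(-1)^{j+1}]u=(j+1)(2m+j)L(-1)^{j}u$ for $u\in V_m$, and with $u_j=L(1)^jv\in V_{k-j}$ the ansatz $w=\sum_{j=0}^{N}c_jL(-1)^{j+1}u_j$ gives $L(1)w=2kc_0\,v+\sum_{j=1}^{N}\bigl(c_{j-1}+(j+1)(2k-j)c_j\bigr)L(-1)^ju_j$, so your choice of $c_0=1/(2k)$ and the recursion kill every unwanted term \emph{as scalars} --- no linear independence of the vectors $L(-1)^ju_j$ is needed, which makes the "triangular system" language slightly loose but harmless. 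The one point to flag is hypotheses: your bound $N\le k$, which is exactly what keeps the denominators $2k-j$ positive, rests on $V_n=0$ for $n<0$ (you call this CFT-type; that assumption suffices, by the remark after Corollary \ref{cor710}). The lemma as printed says "any VOA," and the paper's VOA axioms only require the grading to be bounded below; if some $V_{-m}\neq 0$ with $m\geq 1$ your recursion can hit the forbidden index $j=2k$, and indeed a quasi-primary vector of negative weight generates an $\mathfrak{sl}_2$-string in which $L(1)$ fails to be onto at a positive level, so the blanket statement is genuinely delicate outside the nonnegatively graded case. Since every application in the paper is to CFT-type VOAs, your proof covers what is actually used, but you should state the hypothesis $V_n=0$ for $n<0$ explicitly rather than leaving it implicit in the phrase "using only the CFT-type hypothesis."
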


The first main result of this Subsection is the $Q(V)$-analog of this Lemma.
\begin{thm}\label{thmalmostonto} For any VOA $V$, the map
$$\Lambda: Q_{2k+2}(V)\rightarrow Q_{2k}(V)$$
is a surjection unless perhaps $k=0$.
\end{thm}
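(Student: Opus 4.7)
The plan is to leverage the tensor product identification $Q(V) \cong Q \otimes V^{(2)}$ from Theorem \ref{p:QMVtensor}, combined with the polynomial description $Q = M[E_2]$. Under these identifications, $Q(V) \cong (M\otimes V^{(2)})[E_2]$, the operator $\delta$ becomes literal differentiation $\partial/\partial E_2$, and the lowering operator takes the simple form
\[
\Lambda = \tfrac{12}{2\pi i}\tfrac{\partial}{\partial E_2} + L(1),
\]
where $L(1)$ acts only on the $V^{(2)}$ factor and commutes with multiplication by elements of $M$ and by $E_2$.

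Given $g \in Q_{2k}(V)$, I would expand $g = \sum_{d \geq 0} E_2^d \psi_d$ with $\psi_d \in (M\otimes V^{(2)})_{2(k-d)}$, and define the naive antiderivative
\[
f_0 := \sum_{d\geq 0}\tfrac{2\pi i}{12(d+1)}E_2^{d+1}\psi_d \in Q_{2k+2}(V).
\]
A direct calculation using $\delta(E_2^{d+1}) = (d+1)E_2^d$ shows that
\[
\Lambda f_0 - g \;=\; \sum_{d\geq 0}\tfrac{2\pi i}{12(d+1)}E_2^{d+1}\,L(1)\psi_d.
\]
The crucial observation is that each $V^{(2)}$-weight appearing in the residual $g_1 := \Lambda f_0 - g$ is strictly less than the corresponding $V^{(2)}$-weight in $g$, because $L(1)$ lowers the $V$-conformal degree by one.

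I would then iterate this construction, producing residuals $g_0 := g, g_1, g_2, \ldots$ with strictly decreasing top conformal degree in $V$. Because $V$ is bounded below in conformal degree, the iteration must terminate: $g_N = 0$ for some $N$, and setting $f := f_0 + f_1 + \cdots + f_{N-1}$ gives $f \in Q_{2k+2}(V)$ with $\Lambda f = g$, establishing surjectivity. This reduces the entire argument to bookkeeping on the bigraded vector space $(M \otimes V^{(2)})[E_2]$, using nothing beyond the structure theorem $Q = M[E_2]$, the explicit form of $\Lambda$, and the boundedness of $V$ from below.

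The main step requiring care, and the source of the $k=0$ exception, is the very last iteration: when the residual $g_N$ reaches the bottom of the conformal grading of $V$, one must verify that the naive recipe still produces a valid preimage rather than introducing new error terms that cannot be canceled. This hinges on Lemma \ref{lemL1surj}: the surjectivity of $L(1) \colon V_{j+1} \to V_j$ for $j \geq 1$ is precisely what ensures that each iterative correction in $V$-weight can be absorbed. When $k = 0$, the weight budget available in $Q_2(V)$ for this final correction would require surjectivity of $L(1) \colon V_1 \to V_0$, which is exactly the case that Lemma \ref{lemL1surj} does not guarantee, hence the cautious ``unless perhaps $k=0$'' in the statement.
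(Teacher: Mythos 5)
Your construction is correct, but it is a genuinely different argument from the paper's, and it is worth noting that your own closing paragraph misdescribes it. The paper decomposes $Q_{2k}(V)=\bigoplus_{r}Q_{2r}\otimes V_{k-r}$ and inducts on the modular weight $r$: at each step it produces a preimage in the \emph{$V$-direction} by solving $L(1)w=v$, which is exactly where Lemma \ref{lemL1surj} enters and exactly why the case $k=0$ must be excluded (that is the one degree where $L(1)\colon V_{j+1}\to V_j$ may fail to be onto). You instead produce a preimage in the \emph{$E_2$-direction} by formal antidifferentiation in $E_2$ --- which is always possible, with no surjectivity input --- and treat the $1\otimes L(1)$ term as an error to be iterated away; termination follows solely from the lower-boundedness of the conformal grading. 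This trades the paper's appeal to Lemma \ref{lemL1surj} for a finite descent on conformal degree, and as a result your argument is actually stronger: it proves surjectivity of $\Lambda\colon Q_{2k+2}(V)\to Q_{2k}(V)$ for \emph{all} $k$, including $k=0$, and for any QVOA, recovering the paper's Corollary for CFT-type algebras as a special case rather than as a separate check.

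The one flaw is your final paragraph, which is internally inconsistent with the construction you just gave. The ``last iteration'' requires no care at all: when the residual reaches the bottom of the conformal grading, the \emph{next} residual is zero because $L(1)$ annihilates it, and the antiderivative in $E_2$ of the final residual is a perfectly good element of $Q_{2k+2}(V)$ --- no surjectivity of $L(1)\colon V_1\to V_0$ is ever invoked. You should either delete that paragraph or replace it with the observation that the hypothesis $k\neq 0$ (and Lemma \ref{lemL1surj}) is not needed for your argument, so that the ``unless perhaps $k=0$'' in the statement is an artifact of the paper's method rather than of the result.
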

\begin{proof} We have
$$Q_{2k}(V)=\bigoplus_{r=0}^k Q_{2r}\otimes V_{k-r}.$$
Fix any $k\neq 0$.\ We show by induction on $r$ that $\im\Lambda\supseteq Q_{2r}\otimes V_{k-r}$.\ The first case is when $r=0$, when we have to show that $V_k\subseteq\im\Lambda$.\ We have
\begin{align*}
\Lambda(V_{k+1})&=\left(\frac{12}{2\pi i}\partial/\partial E_2\otimes L(1)\right)(V_{k+1})\\
&=L(1)V_{k+1}\\
&=V_k
    \end{align*}
    where the last equality follows from Lemma \ref{lemL1surj}.\ Thus indeed
    $V_k\subseteq \im\Lambda$.

\medskip
    Proceeding by induction, choose $f\in Q_{2r}$ and $v\in V_{k-r}$.\ Let $w\in V_{k-r+1}$ satisfy $L(1)w=v$.\ Such a $w$ exists by Lemma \ref{lemL1surj} as long as  $k\neq r$.\ Let us assume this for now.\ Then
    \begin{eqnarray*}
     &&\Lambda(f\otimes w)=\frac{12}{2\pi i}\partial f/\partial E_2\otimes w+ f\otimes v 
    \end{eqnarray*}
    By induction the first term on the right side lies in $\im\Lambda$.\ Therefore $f\otimes v\in\im\Lambda$ whence
    $Q_{2r}\otimes V_{k-r}\subseteq\im\Lambda$.

    \medskip
    It remains to consider the case $k=r$, when we have to show that $Q_{2k}\subseteq\im\Lambda$.\ But it is clear that $\Lambda(Q_{2k+2})=Q_{2k}$ and this completes the induction, and with it the proof of the Theorem.
\end{proof}

\begin{cor} 
Suppose that $V$ is of CFT-type (i.e., $V_0=\CC\mathbf{1}$).\ Then $\Lambda: Q(V)\rightarrow Q(V)$ is a surjection.
\end{cor}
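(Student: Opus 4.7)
The plan is to reduce the corollary to the single remaining case left open by Theorem \ref{thmalmostonto}, namely surjectivity of $\Lambda\colon Q_2(V) \to Q_0(V)$. Since $\Lambda$ preserves the decomposition $Q(V) = \bigoplus_k Q_{2k}(V)$ by lowering weight by $2$, surjectivity of $\Lambda$ on $Q(V)$ is equivalent to surjectivity of each graded piece $\Lambda\colon Q_{2k+2}(V) \to Q_{2k}(V)$, and Theorem \ref{thmalmostonto} already handles every case except $k=0$.

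To treat $k=0$, I would use the CFT-type hypothesis, which gives $V_0 = \CC\mathbf{1}$ and $V_n = 0$ for $n<0$. Then
\[
Q_0(V) = Q_0 \otimes V_0 = \CC\mathbf{1},
\]
so it suffices to exhibit a single preimage of $\mathbf{1}$ under $\Lambda$. The natural candidate lives in
\[
Q_2(V) = (Q_2 \otimes V_0) \oplus (Q_0 \otimes V_1) = \CC\,E_2\mathbf{1} \oplus V_1,
\]
and the computation is immediate from the definition \eqref{eq:loweringoperator}: since $L(1)\mathbf{1}=0$ and $\partial E_2/\partial E_2 = 1$, one finds
\[
\Lambda\!\left(\tfrac{2\pi i}{12}E_2 \otimes \mathbf{1}\right) = \mathbf{1}.
\]
Hence $\Lambda\colon Q_2(V) \to Q_0(V)$ is surjective.

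Combining this with Theorem \ref{thmalmostonto}, every graded piece $\Lambda\colon Q_{2k+2}(V) \to Q_{2k}(V)$ is surjective, and therefore $\Lambda\colon Q(V) \to Q(V)$ is surjective. There is no real obstacle here beyond identifying the correct preimage in $Q_2(V)$, which the CFT-type assumption makes possible precisely because $\mathbf{1}$ is quasi-primary and $E_2$ is the canonical quasi-modular generator of weight $2$ on which $\partial/\partial E_2$ acts nontrivially.
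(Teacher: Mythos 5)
Your argument is correct and is essentially the paper's own proof: both reduce to the single case $k=0$ left open by Theorem \ref{thmalmostonto}, identify $Q_0(V)=\CC\mathbf{1}$ using CFT-type, and hit $\mathbf{1}$ by applying $\Lambda$ to a scalar multiple of $E_2\otimes\mathbf{1}$, using $L(1)\mathbf{1}=0$. The only difference is that you write out the explicit preimage $\tfrac{2\pi i}{12}E_2\otimes\mathbf{1}$ where the paper just notes $\Lambda(\langle E_2\otimes\mathbf{1}\rangle)=\CC\mathbf{1}$.
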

\begin{proof}Consider the map $\Lambda:Q_2(V)\rightarrow Q_0(V)$.\ We have
\begin{eqnarray*}
&&\Lambda(Q_2(V))\supseteq \left(\frac{12}{2\pi i}\partial/\partial E_2\otimes 1+1\otimes L(1) \right) (\langle E_2\otimes \mathbf{1}\rangle)   =\CC\mathbf{1}.
\end{eqnarray*}
Because $V_0=\CC\mathbf{1}$ by assumption it follows that
$Q_0(V)=\CC\mathbf{1}$ and so $\Lambda$ surjects $Q_2(V)$ onto $Q_0(V)$.\ Therefore the statement of the Corollary follows by Theorem \ref{thmalmostonto}.
\end{proof}

\begin{cor}\label{cor710}
For any VOA $V$ we have
$$\dim Q_{2k+2}(V)-\dim Q_{2k}(V) = \dim M_{2k+2}$$
unless perhaps $k=0$.\ If $V$ is of CFT-type then the equality holds for all $k$.
$\hfill\Box$
\end{cor}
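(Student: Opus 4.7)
The result is obtained by packaging two preceding facts. The plan is first to apply Lemma \ref{lemineq} to translate the desired dimension equality into the surjectivity of the lowering operator $\Lambda$, and then to invoke Theorem \ref{thmalmostonto} (together with the Corollary immediately preceding) to supply that surjectivity.

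In more detail, Lemma \ref{lemineq} records the inequality
\[
\dim Q_{2k+2}(V) - \dim Q_{2k}(V) \leq \dim M_{2k+2}(V),
\]
with equality holding precisely when $\Lambda \colon Q_{2k+2}(V) \to Q_{2k}(V)$ is surjective. For arbitrary $V$ and any $k \neq 0$, Theorem \ref{thmalmostonto} yields exactly this surjectivity, giving the first assertion. When $V$ is of CFT-type, the preceding Corollary closes the remaining case $k = 0$ by observing that $Q_0(V) = \CC \mathbf{1}$ lies in the image of $\Lambda$ applied to $E_2 \otimes \mathbf{1} \in Q_2(V)$; combining with Theorem \ref{thmalmostonto} then gives the equality for all $k \geq 0$.

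There is no substantive obstacle here: the genuine content has already been developed upstream. In particular, Corollary \ref{cokerL}, which identifies $M(V)$ with $\ker \Lambda$, is what converts a dimension count into a surjectivity question via the isomorphism $\Lambda(Q_{2k+2}(V)) \cong Q_{2k+2}(V)/M_{2k+2}(V)$. The surjectivity of $L(1)\colon V_{k+1} \to V_k$ for $k \neq 0$ (Lemma \ref{lemL1surj}) together with the decomposition $Q_{2k}(V) = \bigoplus_{r=0}^k Q_{2r} \otimes V_{k-r}$ drives the inductive proof of Theorem \ref{thmalmostonto}. Once these are in hand, the corollary requires no further calculation; the only minor care needed is to parse the right-hand side of the displayed equation as $\dim M_{2k+2}(V)$, which is the natural quantity produced by Lemma \ref{lemineq}.
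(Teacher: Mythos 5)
Your proof is correct and matches the paper's (implicit) argument exactly: the corollary is stated with a terminal $\Box$ precisely because it follows by combining Lemma \ref{lemineq} with Theorem \ref{thmalmostonto} and the CFT-type surjectivity corollary for the $k=0$ case. You are also right to read the right-hand side as $\dim M_{2k+2}(V)$ rather than $\dim M_{2k+2}$ (the printed statement drops the "$(V)$"), as confirmed by Lemma \ref{lemQM} and the Heisenberg dimension table.
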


\begin{rmk}
It is well-known, cf. Lemma 5.2 of \cite{DMshifted}, that if $V$ is of CFT-type i.e., $V_0=\CC\bone$, then $V_k=0$ for $k<0$.
\end{rmk}
%%%%%%%%%%%%%%%%%%%%%%
\subsection{The weight-preserving operator \texorpdfstring{$P$}{P}}\label{SSP}

It is defined as follows: as an endomorphism of $Q\otimes V^{(2)}$, it is described as
\begin{align}
\label{Pdef}
    P\df \exp\left(-\frac{2\pi i}{12} E_2 \otimes L(1)\right).
\end{align}
That is, on a pure tensor $f\otimes v$ we have
\[
P(f\otimes v) = \sum_{n\geq 0}\frac{1}{n!}\left(\left(-\frac{2\pi i}{12} E_2\right)^n f\right) \otimes \left(L(1)^nv\right).
\]

Similarly to $\Lambda$, $P$ belongs to the natural $\mathfrak{sl}_2\oplus\mathfrak{sl}_2$ acting on $Q(V)$ but \emph{not} the diagonal $\mathfrak{sl}_2$.\ We shall also use $P$ to denote the composition of \eqref{Pdef} with $\iota$.\ This should cause no confusion.
It is clear that $P$ is an invertible weight-preserving map 
\[
P\colon Q \otimes V^{(2)} \stackrel{\cong}{\longrightarrow} Q(V).
\]

Now we come to the main result of this Subsection.\ We state it only for QVOAs of CFT-type, however the interested reader may readily formulate the general case.

\begin{thm}\label{thmPiso}Suppose that $V$ is QVOA of CFT-type.\ Then the following diagram commutes
\[
    \begin{xymatrix}{
    Q\otimes V^{(2)}\ar[rr]^{P}_{\cong}&& Q(V)\\
    M\otimes V^{(2)}\ar[rr]^{P|_{M\otimes V^{(2)}}}_{\cong}\ar@{^{(}->}[u]&& M(V)\ar@{^{(}->}[u]\\
    }
\end{xymatrix}
\]
where the vertical maps are the natural containments.
\end{thm}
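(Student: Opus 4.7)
The plan is to reduce the commutativity of the diagram to a single algebraic identity: that $P$ conjugates the lowering operator $\Lambda$ into the much simpler operator $\tfrac{12}{2\pi i}\delta\otimes 1$ acting on $Q\otimes V^{(2)}$. Once this identity is established, everything else falls out from the description of kernels. Indeed, since $Q = M[E_2]$ as an $M$-algebra and $\delta=\partial/\partial E_2$, we have $\ker(\delta\otimes 1) = M\otimes V^{(2)}$ inside $Q\otimes V^{(2)}$. On the other hand, by Corollary \ref{cokerL} we have $M(V) = \ker\Lambda$ inside $Q(V)$. So if $P^{-1}\Lambda P = \tfrac{12}{2\pi i}\delta\otimes 1$, then $P$ carries $\ker(\delta\otimes 1)$ bijectively onto $\ker\Lambda$, i.e., $P$ restricts to a bijection $M\otimes V^{(2)} \to M(V)$.

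The core step is therefore the conjugation calculation. Set $\alpha \df \tfrac{2\pi i}{12}$ so that $P = e^A$ with $A = -\alpha E_2\otimes L(1)$. The term $1\otimes L(1)$ in $\Lambda$ commutes with $A$ (both factors have $L(1)$ in the second slot, and $[L(1),L(1)]=0$), so $P^{-1}(1\otimes L(1))P = 1\otimes L(1)$. For the other term, the canonical relation $[E_2,\delta] = -1$ on $Q$ gives
\[
[A,\delta\otimes 1] = -\alpha[E_2,\delta]\otimes L(1) = \alpha(1\otimes L(1)),
\]
and the next iterated commutator $[A,\alpha(1\otimes L(1))]$ vanishes for the same commuting reason as above. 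Hence the adjoint expansion terminates and
\[
P^{-1}(\delta\otimes 1)P = \delta\otimes 1 - [A,\delta\otimes 1] = \delta\otimes 1 - \alpha(1\otimes L(1)).
\]
Combining, $P^{-1}\Lambda P = \tfrac{1}{\alpha}\delta\otimes 1 - (1\otimes L(1)) + (1\otimes L(1)) = \tfrac{12}{2\pi i}\delta\otimes 1$, with the two $1\otimes L(1)$ terms canceling precisely.

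It remains to note that $P$ commutes with multiplication by elements of $M\subseteq Q$ acting on the first tensor factor, since any $f\in M$ commutes with $E_2$ inside the commutative algebra $Q$, hence with $A$ and with $P = e^A$. Combined with the weight-preservation of $P$, this shows the restriction $P|_{M\otimes V^{(2)}}$ is a graded $M$-linear isomorphism onto $M(V)$. The main obstacle is really just the bookkeeping in the BCH computation and confirming the cancellation, plus observing the elementary identifications $\ker(\delta\otimes 1) = M\otimes V^{(2)}$ and, via Corollary \ref{cokerL}, $\ker\Lambda = M(V)$; no deeper structural input is needed.
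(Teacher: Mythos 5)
Your proof is correct, and it takes a genuinely different route from the paper's. The paper proves the theorem by brute force: it expands an arbitrary $f\in M_{2n}(V)=\ker\Lambda$ in powers of $E_2$, extracts from $\Lambda f=0$ the recursion $\sum_{\ell}f_{m,\ell}v_{m,\ell}=-\tfrac{2\pi i}{12m}L(1)\sum_{\ell}f_{m-1,\ell}v_{m-1,\ell}$ by matching coefficients of $E_2^{m-1}$, solves the recursion to conclude $f=P(f^0)$ where $f^0$ is the $E_2$-free part, and then runs the argument backwards for the reverse containment. You instead package all of this into the single operator identity $P^{-1}\Lambda P=\tfrac{12}{2\pi i}\,\delta\otimes 1$, obtained from a two-step adjoint expansion that terminates because $\mathrm{ad}_A^2$ kills $\delta\otimes 1$ (your commutator bookkeeping, including $[E_2,\delta]=-1$ and the cancellation of the two $1\otimes L(1)$ terms, checks out). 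Given that identity, the theorem is immediate from Corollary \ref{cokerL} and the elementary fact $\ker(\delta\otimes 1)=M\otimes V^{(2)}$ in $Q\otimes V^{(2)}=M[E_2]\otimes V^{(2)}$. What your approach buys: injectivity and surjectivity onto $M(V)$ come simultaneously and for free from the bijectivity of $P$; the intertwining identity is a reusable structural fact (it says $P$ linearizes $\Lambda$ into a pure $E_2$-derivative, which also clarifies the depth filtration); and it does not invoke the CFT-type hypothesis anywhere except through the well-definedness of $P$ itself, which already follows from the QVOA axiom $V_n=0$ for $n\ll 0$ — so it delivers the "general case" the paper leaves to the reader. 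What the paper's computation buys in exchange is the explicit coefficient formula $\sum_{\ell}f_{m,\ell}v_{m,\ell}=\tfrac{1}{m!}(-\tfrac{2\pi i}{12}L(1))^m\sum_{\ell}f_{0,\ell}v_{0,\ell}$, which is handy when one wants to write down bases of $M_k(V)$ concretely, as in Table \ref{tab:bases}. Your closing remark on $M$-linearity is a small extra not needed for the diagram itself but relevant to Theorem \ref{mainthm2}, and it is also correct.
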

\begin{proof} 
The main point of the Theorem is that restricting $P$ to $M\otimes V^{(2)}$  induces a surjection onto $M(V)$.\ Since $P$ preserves weights it suffices to show that $P|_{M\otimes V^{(2)}}$ induces an isomorphism at weight $k=2n$.\ 

Let $f\in Q_k(V)$ be as in \eqref{fdef} and assume that in fact $f\in M_k(V)$.\ Then $\Lambda f=0$ by Corollary \ref{cokerL}.\ Writing this out explicitly shows that
\begin{align*}
0 &= \left( \frac{12}{2\pi i}\partial/\partial E_2\otimes 1+1\otimes L(1)  \right).\sum_{m=0}^nE_2^m\sum_{\ell}^{n-m} f_{m, \ell}v_{m, \ell}\\
&=\frac{12}{2\pi i}\sum_{m=0}^nmE_2^{m-1}\sum_{\ell}^{n-m} f_{m, \ell}v_{m, \ell} + \sum_{m=0}^n E_2^m\sum_{\ell}^{n-m}f_{m, \ell}L(1)v_{m, \ell}.
\end{align*}
    Now fix $m\geq 1$ and identify coefficients of $E_2^{m-1}$ to see that
    $$\sum_{\ell}^{n-m}f_{m, \ell}v_{m. \ell}=-\frac{2\pi i}{12m}L(1)\sum_{\ell}^{n-m+1}f_{m-1, \ell}v_{m-1, \ell}.$$
    It follows that for all $m\geq 0$ we have
    $$\sum_{\ell}^{n-m}f_{m, \ell}v_{m, \ell}=\frac{(-2\pi i/12 L(1))^m}{m!}\sum_{\ell}^{n} f_{0, \ell}v_{0, \ell},$$
    and hence that
    $$f=\sum_{m=0}^n \frac{(-2\pi i/12 E_2 L(1))^m}{m!}\sum_{\ell}^{n} f_{0, \ell}v_{0, \ell}.$$

    Now $v_{0, \ell}$ has conformal weight $n-\ell$ in $V$.\ Therefore
    $L(1)^{n+1}\in V_{-\ell-1}$ and this vanishes because $\ell\geq0$
    and $V$ has no nonzero negative weight states because it is of CFT-type.\ Therefore the last displayed equation may be rewritten in the form
    $$ f= P\sum_{\ell=0}^n f_{0, \ell}v_{0, \ell}.$$

This shows that  
$P(M\otimes V^{(2)})\supseteq M(V)$.\
On the other hand, if we start with a state $\sum_{\ell=0}^n f_{0, \ell} v_{0, \ell}\in M(V)$ and run the argument above backwards we see that
$P\left(\sum_{\ell=0}^n f_{0, \ell} v_{0, \ell}\right)\in \ker\Lambda$.\ 
Therefore $P|_{M\otimes V^{(2)}}$ is a surjection onto $M(V)$, and the Theorem is proved.
\end{proof}

This allows us to compute the dimension of $M_k(V)$ giving a more explicit computation compared to the previous result.\ Again we assume for simplicity that $V$ is a VOA.\  We have via an isomorphism given by the above exponential that

\begin{cor}
If $V$ is a VOA of CFT-type then 
\begin{align*}
\dim M_k(V)=\sum_{i=0}^{k/2}\dim M_{2i}\dim V_{\frac{k}{2}-i}\end{align*}
\end{cor}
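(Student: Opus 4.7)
The plan is to simply read off the graded dimension from Theorem \ref{thmPiso}. That theorem provides a $\CC$-linear isomorphism
\[
P|_{M\otimes V^{(2)}}\colon M\otimes V^{(2)} \xrightarrow{\ \cong\ } M(V),
\]
and because $E_2$ has weight $2$ while $L(1)$ acts with weight $-2$ on $V^{(2)}$ (recall $V^{(2)}_{2n}=V_n$, so $L(1)\colon V^{(2)}_{2n}\to V^{(2)}_{2n-2}$), the operator $E_2\otimes L(1)$ is weight-preserving on $Q\otimes V^{(2)}$. Thus $P$, and therefore $P|_{M\otimes V^{(2)}}$, restricts to an isomorphism in each graded piece:
\[
P\colon (M\otimes V^{(2)})_k \xrightarrow{\ \cong\ } M_k(V).
\]

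Next I would decompose the left-hand side using the tensor-product grading. We have
\[
(M\otimes V^{(2)})_k = \bigoplus_{a+b=k} M_a \otimes V^{(2)}_b.
\]
Since $M_a = 0$ unless $a$ is a nonnegative even integer, and $V^{(2)}_b = 0$ unless $b$ is a nonnegative even integer (with $V^{(2)}_{2j} = V_j$), only terms with $a = 2i$ and $b = k-2i$ contribute, and then only when $k$ is even. Writing $k=2n$ and reindexing $b = 2(n-i)$ gives
\[
(M\otimes V^{(2)})_{2n} = \bigoplus_{i=0}^{n} M_{2i} \otimes V_{n-i}.
\]
Taking dimensions and applying the weight-$k$ isomorphism above yields
\[
\dim M_k(V) = \sum_{i=0}^{k/2}\dim M_{2i}\cdot \dim V_{\tfrac{k}{2}-i},
\]
as claimed. (For odd $k$, both sides vanish: the left because $M_k(V)=0$ by the standard parity argument using $-I\in\Gamma$, the right because $M_{2i}\otimes V_{k/2-i}$ does not make sense as written, so the sum is empty.)

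There is essentially no obstacle: the work has already been done in Theorem \ref{thmPiso}. The only point requiring a moment's care is the bookkeeping for the doubled grading on $V^{(2)}$ and the verification that the exponential defining $P$ is weight-preserving; the CFT-type hypothesis $V_0=\CC\mathbf{1}$ (equivalently $V_n=0$ for $n<0$) is exactly what is needed to ensure that $P$ is defined as a finite sum on each graded piece and that Theorem \ref{thmPiso} applies.
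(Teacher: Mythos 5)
Your proposal is correct and follows essentially the same route as the paper: the corollary is obtained there by reading off graded dimensions from the weight-preserving isomorphism $P|_{M\otimes V^{(2)}}\colon M\otimes V^{(2)}\xrightarrow{\cong} M(V)$ of Theorem \ref{thmPiso}, together with the identification $V^{(2)}_{2j}=V_j$ and the fact that CFT-type forces $V_n=0$ for $n<0$. Your write-up merely makes the grading bookkeeping explicit, which the paper leaves to the reader.
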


\begin{ex}
    Consider the case when $V=S$ is the rank-one Heisenberg VOA generated with canonical weight $1$ state $h$ (cf.\ 
    Appendix \ref{AppS}).\  A basis for $(M\otimes V^{(2)})_4$
is 
$$E_4\otimes \mathbf{1},\quad 1\otimes h(-1)^2\mathbf{1},\quad 1\otimes h(-2)\mathbf{1}.$$
The respective $P$-image of these three states are 
$$E_4\mathbf{1},\quad h(-1)^2\mathbf{1},\quad h(-2)\mathbf{1}-\frac{2\pi i}{6}E_2h,$$
which is indeed a basis of $M_4(S)$ as we see from Table \ref{tab:bases} below.
\end{ex} 
%%%%%%%%%%%%%%%%%%%%%%%%%

\subsection{Some graded dimensions}
Theorem \ref{thmPiso} makes it easy to read-off the graded dimensions (Hilber-Poincare series) of spaces such as $Q(V)$ and $M(V)$.\ For a QVOA $V=\oplus_n V_n$ we write
$$\dim_q V\df \sum_n \dim V_n q^n.$$

\begin{lem}\label{lemQM}
Suppose that $V$ is a QVOA of CFT-type.\ Then
\begin{enumerate}
    \item[(a)] \[\sum_{k\geq0} \dim Q_{2k}(V)q^k=\frac{\dim_qV}{(1-q)(1-q^2)(1-q^3)}\]
    \item[(b)] \[\sum_{k\geq0} \dim M_{2k}(V)q^k=\frac{\dim_qV}{(1-q^2)(1-q^3)}.\]
\end{enumerate}
\end{lem}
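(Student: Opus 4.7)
The plan is to read off both identities as immediate consequences of the graded isomorphisms already established in the paper, namely $Q(V) \cong Q\otimes V^{(2)}$ (Theorem \ref{p:QMVtensor}) and $M(V) \cong M\otimes V^{(2)}$ under $P$ (Theorem \ref{thmPiso}, which applies precisely because $V$ is assumed to be of CFT-type). Both are isomorphisms of $2\ZZ$-graded vector spaces, where the tensor product carries the total-degree grading. Thus the generating functions $\sum_k \dim Q_{2k}(V) q^k$ and $\sum_k \dim M_{2k}(V) q^k$ factor as products of the corresponding series for the two tensor factors.

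The first step is to express the half-weight Hilbert series of $V^{(2)}$. Since $V^{(2)}_{2n} = V_n$ by definition, we have
\[
\sum_{n\geq 0} \dim V^{(2)}_{2n}\, q^n = \sum_{n\geq 0} \dim V_n\, q^n = \dim_q V.
\]
Next, since $Q = \CC[E_2,E_4,E_6]$ is a polynomial algebra freely generated in weights $2,4,6$, its half-weight Hilbert series is
\[
\sum_{k\geq 0} \dim Q_{2k}\, q^k = \frac{1}{(1-q)(1-q^2)(1-q^3)},
\]
and analogously, $M=\CC[E_4,E_6]$ gives $\sum_k \dim M_{2k}\,q^k = \frac{1}{(1-q^2)(1-q^3)}$.

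Finally, under the decomposition $Q(V)_{2k} \cong \bigoplus_{i+j=k} Q_{2i}\otimes V^{(2)}_{2j} = \bigoplus_{i+j=k} Q_{2i}\otimes V_j$ (and analogously for $M(V)$ via $P$), multiplying the two generating series yields
\[
\sum_{k\geq 0} \dim Q_{2k}(V)\, q^k = \frac{\dim_q V}{(1-q)(1-q^2)(1-q^3)},\qquad \sum_{k\geq 0}\dim M_{2k}(V)\,q^k = \frac{\dim_q V}{(1-q^2)(1-q^3)},
\]
proving both (a) and (b). There is no real obstacle in this proof: the content lives entirely in the earlier graded identifications, and the only thing to verify is that the gradings match up as indicated, which is part of the statements of Theorems \ref{p:QMVtensor} and \ref{thmPiso}.
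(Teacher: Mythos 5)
Your proof is correct and follows essentially the same route as the paper: both read off (a) from the graded isomorphism $Q(V)\cong Q\otimes V^{(2)}$ of Theorem \ref{p:QMVtensor} and (b) from the graded isomorphism $P\colon M\otimes V^{(2)}\cong M(V)$ of Theorem \ref{thmPiso}, combined with the Hilbert--Poincar\'e series of $Q$ and $M$. The only cosmetic difference is that you work directly in the half-weight variable while the paper states the series for $Q$ and $M$ in the standard grading first.
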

\begin{proof}
Recall that the Hilbert-Poincare series for $Q$ and $M$ are (with the standard grading)
$$\frac{1}{(1-q^2)(1-q^4)(1-q^6)},\quad \frac{1}{(1-q^4)(1-q^6)}$$
respectively.\ Since there is a graded isomorphism $Q(V)= Q\otimes V^{(2)}$ then part (a) follows.\ Part (b) is similar, except we need Theorem \ref{thmPiso} to identify $M(V)$ as a $\ZZ$-graded linear space.
\end{proof}
 
\begin{cor}
   The sequence $\{\dim M_{2k}{V}\}$ is the first difference sequence of $\{\dim Q_{2k}(V)\}$. 
\end{cor}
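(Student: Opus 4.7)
The plan is to read the corollary directly off the two generating function identities supplied by Lemma \ref{lemQM}. Writing $A(q) \df \sum_{k\geq 0}\dim Q_{2k}(V)\,q^k$ and $B(q) \df \sum_{k\geq 0}\dim M_{2k}(V)\,q^k$, the two formulas of Lemma \ref{lemQM} differ precisely by a factor of $(1-q)$, so $B(q) = (1-q)A(q)$.

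Next I would expand the right-hand side as $A(q) - qA(q) = \sum_{k\geq 0}\dim Q_{2k}(V)q^k - \sum_{k\geq 1}\dim Q_{2k-2}(V)q^k$ and compare coefficients of $q^k$ on both sides. This yields
\[
\dim M_{2k}(V) \;=\; \dim Q_{2k}(V) - \dim Q_{2k-2}(V)
\]
for all $k\geq 1$, together with $\dim M_0(V) = \dim Q_0(V)$ (since $V$ is of CFT-type, both equal $\dim V_0 = 1$, consistent with the convention $\dim Q_{-2}(V) = 0$). This is exactly the statement that $\{\dim M_{2k}(V)\}$ is the first difference sequence of $\{\dim Q_{2k}(V)\}$.

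There is essentially no obstacle: the content is entirely encoded in the factorization of denominators in Lemma \ref{lemQM}. As a sanity check, the identity also agrees with Corollary \ref{cor710}, which established the same equality of dimensions (for $k \neq 0$) via the surjectivity of $\Lambda$ on graded pieces; here the argument is a purely formal manipulation of Hilbert--Poincar\'e series and does not require reproving surjectivity.
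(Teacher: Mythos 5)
Your proposal is correct and matches the paper's proof exactly: both arguments simply observe that the two generating functions in Lemma \ref{lemQM} differ by a factor of $(1-q)$, so that $(1-q)\sum_k \dim Q_{2k}(V)q^k = \sum_k \dim M_{2k}(V)q^k$, and then compare coefficients. The additional sanity checks you mention (consistency with Corollary \ref{cor710} and the $k=0$ convention) are fine but not needed.
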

\begin{proof}
This is because from Lemma \ref{lemQM} we have
$$ (1-q)\sum_{k\geq0} \dim Q_{2k}(V)q^k= \sum_{k\geq0} \dim M_{2k}(V)q^k.$$
\end{proof}

Finally we again specialize to the case that $V=S$ is the rank-one Heisenberg VOA.\ We present a strange and unexpected formula for the first difference sequence of
$\{\dim M_{2k}(S)\}$. This result, first observed computationally, proved useful for discovering some of the structural results on $Q(V)$ and $M(V)$ established above. We have not found an explicit combinatorial bijection that explains this result.
\begin{lem} We have
$$\dim M_{2k}(S)-\dim M_{2k-2}(S)= \mbox{$\#$ doubletons in all partitions $\lambda \vdash k+2$}.$$
\end{lem}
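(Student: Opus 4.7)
The plan is to verify the identity by showing that both sides, as functions of $k$, are coefficients of the same generating series in $q$. On the modular side, for the rank-one Heisenberg VOA $\dim_{q}S=\prod_{n\geq 1}(1-q^{n})^{-1}$, so Lemma~\ref{lemQM}(b) gives
\[
\sum_{k\geq 0}\dim M_{2k}(S)\,q^{k} \;=\; \frac{1}{(1-q^{2})(1-q^{3})}\prod_{n\geq 1}\frac{1}{1-q^{n}}.
\]
Multiplying by $(1-q)$ then produces the generating function for the differences $\dim M_{2k}(S)-\dim M_{2k-2}(S)$.

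On the combinatorial side, I would interpret a \emph{doubleton} of a partition $\lambda$ to mean a part value whose multiplicity in $\lambda$ is exactly $2$; the small cases $\lambda\vdash 2,3,4$, which give totals $1,0,2$, confirm that this matches the first few terms of the difference sequence. For each fixed $i\geq 1$, the partitions with $m_{i}(\lambda)=2$ contribute $q^{2i}\prod_{j\neq i}(1-q^{j})^{-1}=q^{2i}(1-q^{i})\prod_{n\geq 1}(1-q^{n})^{-1}$ to the total. Summing over $i$ and letting $d(n)$ denote the total number of doubletons across partitions of $n$, one gets
\[
\sum_{n\geq 0}d(n)\,q^{n} \;=\; \Bigl(\prod_{n\geq 1}(1-q^{n})^{-1}\Bigr)\sum_{i\geq 1}\bigl(q^{2i}-q^{3i}\bigr) \;=\; \Bigl(\prod_{n\geq 1}(1-q^{n})^{-1}\Bigr)\biggl(\frac{q^{2}}{1-q^{2}}-\frac{q^{3}}{1-q^{3}}\biggr).
\]
A short partial-fraction simplification reduces the bracketed difference to $q^{2}(1-q)/\bigl((1-q^{2})(1-q^{3})\bigr)$; dividing by $q^{2}$ to shift indices then yields exactly the generating function from the previous paragraph, finishing the proof.

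The whole argument is formal once the meaning of \emph{doubleton} is pinned down, which is the only point that requires care; the remaining steps are elementary generating-function bookkeeping. A more conceptual bijective proof --- for instance via the isomorphism $M\otimes S^{(2)}\cong M(S)$ of Theorem~\ref{thmPiso}, matching doubletons with suitable basis monomials on the right-hand side --- would be illuminating, but it is not required for the dimension identity as stated.
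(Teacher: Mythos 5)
Your proof is correct and follows essentially the same route as the paper: compute the generating function of the difference sequence from Lemma \ref{lemQM}(b) and match it against the generating function for doubletons (a part occurring with multiplicity exactly two), which is indeed $\tfrac{q^{2}(1-q)}{(1-q^{2})(1-q^{3})}\prod_{n\geq1}(1-q^{n})^{-1}$. The only difference is that you derive this doubleton generating function directly rather than citing OEIS A116646, which makes the argument self-contained.
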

\begin{proof}
   We have
   $$\dim_q S=\sum_{n\geq0} p(n)q^n = \varphi(q):= \prod_{n\geq 1} (1-q^n)^{-1}.$$
   Therefore by Lemma \ref{lemQM}(b) we see that the first difference sequence of $\{\dim M_{2k}(S)\}$ is equal to
   $$ \frac{1}{(1+q)(1-q^3)\varphi(q)}.$$

   Now the Lemma follows from the generating function for the number of doubletons in all partitions of $n$, which is
   known (OEIS A116646) to be
   $$ \frac{q^2}{(1+q)(1-q^3)\varphi(q)}.$$
\end{proof}

\begin{ex}
We explain how the constructions above appear in the basic example of the rank-one Heisenberg algebra $S = \CC[h(-1),h(-2),\ldots]$. From the definitions above, we have defined 
\[Q(S)=\CC[E_2,E_4,E_6,h(-1),h(-2),...]\] 
and with a grading where $E_2$ has weight $k=2$, $E_4$ has weight $k=4$, $E_6$ has weight $k=6$, and $h(-n)$ has weight $k=2n$. We let $Q_k(S)$ denote the $k$th graded piece. Because of the presence of quasimodular forms, we can view elements of $Q(S)$ as functions on $\uhp$.
\begin{thm}
        The dimension $\dim Q_k(S)$ counts the number of ways of writing $k$ as a sum of even integers, but where the parts 2, 4 and 6 come in two colors. Equivalently, for $k=2n$, $\dim Q_k(S)$ is thus the number of colored partitions of $n$ for which the parts $1, 2, 3$ have two colors.\ In particular, this is sequence $A000098$ in the OEIS.
        \begin{proof}
            This follows immediately from the grading of $Q(S)$ by considering the basis of monomials. The monomial $E_2^aE_4^bE_6^c (\prod_{n\geq 1} h(-n)^{d_n})$, where $d_n = 0$ for almost all $n$, corresponds to the partition $k= 2a+4b+6c+\sum_{n\geq 1} 2nd_n$.
        \end{proof}
    \end{thm}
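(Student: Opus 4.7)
The plan is to invoke the graded identification $Q(S) \cong Q \otimes S^{(2)}$ provided by Theorem \ref{p:QMVtensor} and then read off a monomial basis. The algebra of quasi-modular forms is the polynomial ring $Q = \CC[E_2, E_4, E_6]$ with $E_{2j}$ of weight $2j$, and the rank-one Heisenberg Fock space admits a PBW-type description $S = \CC[h(-n) : n\geq 1]$ in which $h(-n)$ has conformal degree $n$. Passing to the doubled grading, $h(-n)$ has weight $2n$ in $S^{(2)}$, and the tensor decomposition of weights gives an explicit monomial basis of $Q \otimes S^{(2)}$.

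First I would list the monomial basis, namely the elements
\[
E_2^a\, E_4^b\, E_6^c \prod_{n \geq 1} h(-n)^{d_n}
\]
indexed by tuples $(a,b,c,d_1,d_2,\ldots)$ with entries in $\ZZ_{\geq 0}$ and only finitely many $d_n$ nonzero. Such a monomial has weight
\[
k \;=\; 2a + 4b + 6c + \sum_{n \geq 1} 2n\, d_n
\]
in $Q(S)$. This realizes every such $k$ as a sum of even integers, with the parts of size $2, 4, 6$ potentially arising from two distinct sources (either the Eisenstein factor $E_{2j}$ or the Heisenberg generator $h(-j)$ for $j=1,2,3$) and each larger even part $2n$ arising only from the single generator $h(-n)$. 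This matches the first combinatorial description in the statement; dividing each part by $2$ and setting $k = 2n$ gives precisely colored partitions of $n$ in which the parts $1, 2, 3$ occur in two colors.

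To complete the OEIS identification, I would compute the generating function
\[
\sum_{k \geq 0} \dim Q_k(S)\, q^k \;=\; \frac{1}{(1-q^2)^2 (1-q^4)^2 (1-q^6)^2} \prod_{n \geq 4} \frac{1}{1 - q^{2n}},
\]
substitute $q \mapsto t^{1/2}$ (only even exponents appear), and compare the result with the closed form for $A000098$. None of the steps is deep: the only point requiring care, which I regard as the main conceptual obstacle, is the grading convention, namely the doubling of weights when passing from $S$ to $S^{(2)}$, which is precisely what forces $h(-n)$ to contribute a part of size $2n$ rather than $n$ and thereby makes parts $2,4,6$ available in two colors.
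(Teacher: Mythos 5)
Your proof is correct and follows essentially the same route as the paper: both arguments read off the monomial basis $E_2^aE_4^bE_6^c\prod_{n\geq 1}h(-n)^{d_n}$ of $Q(S)\cong Q\otimes S^{(2)}$ and identify the weight $k=2a+4b+6c+\sum_{n\geq 1}2nd_n$ with a partition into even parts where $2,4,6$ come in two colors. Your added generating-function check $\frac{1}{(1-q^2)^2(1-q^4)^2(1-q^6)^2}\prod_{n\geq 4}(1-q^{2n})^{-1}$ is a harmless (and correct) extra verification of the OEIS identification.
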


Recall that above we explained how the $S$-valued modular forms $M_k(S)\subseteq Q_k(S)$ are the kernel of the lowering operator \eqref{eq:loweringoperator}. 
We list the dimension counts $\dim Q_k(S)$ and $\dim M_k(S)$ for small values of $k$ in Table \ref{tab:dimensions}, as well as corresponding bases for $M_k(S)$ for small values of $k$ in Table \ref{tab:bases}.
    \begin{table}
    \centering
        \begin{tabular}{|c|c|c|}
             \hline
             $k$ & $\dim Q_k(S)$ & $\dim M_k(S)$ \\
             \hline
             $0$ & $1$ & $1$  \\
             \hline
             $2$ & $2$ & $1$  \\
             \hline
             $4$ & $5$ & $3$ \\
             \hline
             $6$ & $10$ & $5$ \\
            \hline
             $8$ & $19$ & $9$ \\
             \hline
             $10$ & $33$ & $14$ \\
             \hline
             $12$ & $57$ & $24$ \\
             \hline
             $14$ & $92$ & $35$ \\
             \hline
             $16$ & $147$ & $55$ \\
             \hline
             $18$ & $227$ & $80$ \\
             \hline
             $20$ & $345$ & $118$ \\
             \hline
             $22$ & $512$ & $167$ \\
             \hline
             $24$ & $752$ & $240$ \\
             \hline
             $26$ & $1083$ & $331$ \\
             \hline
             $28$ & $1545$ & $462$ \\
             \hline
        \end{tabular}
        \caption{Dimensions for Heisenberg-valued quasi-modular and modular forms}
        \label{tab:dimensions}
    \end{table}
    \SetTblrInner{rowsep=7pt}
    \begin{table}    
    \centering
        \begin{tblr}{|c||l|}
            \hline
             $k$ & Basis Vectors for $M_{k}(S)$ \\
             \hline \hline
              $0$ & $1$ \\
             \hline
             $2$ & $h(-1)$ \\
             \hline
             $4$ & $E_4, \hspace{6pt} h(-1)^2,\hspace{6pt} E_2h(-1)-\dfrac{3}{\pi i}h(-2)$ \\
             \hline
             $6$ & \makecell[l]{$E_6,\hspace{6pt} E_4h(-1),\hspace{6pt} h(-1)^3$, \\
            $E_2^2h(-1)-\dfrac{6}{\pi i}E_2h(-2)-\dfrac{12}{\pi^2}h(-3)$, \\ $E_2h(-1)^2-\dfrac{3}{\pi i}h(-2)h(-1)$} \\
            \hline
            $8$ & \makecell[l]{$E_4^2,\hspace{6pt} E_6h(-1),\hspace{6pt} E_4h(-1)^2,\hspace{6pt} h(-1)^4,E_4h(-2)-\dfrac{\pi i}{3}E_4E_2h(-1),$\\$h(-4)+\dfrac{\pi^3 i}{54}E_2^3h(-1)-\dfrac{\pi}{6}E_2^2h(-2)-\dfrac{2\pi i}{3}E_2h(-3)$,\\$h(-3)h(-1)-\dfrac{\pi^2}{12}E_2^2h(-1)^2-\dfrac{\pi i}{2}E_2h(-2)h(-1)$, \\$h(-2)^2-\dfrac{\pi^2}{9}E_2^2h(-1)^2-\dfrac{2\pi i}{3}E_2h(-2)h(-1)$, \\ $h(-2)h(-1)^2-\dfrac{\pi i}{3}E_2h(-1)^3$} \\
            \hline
        \end{tblr}
        \caption{Bases for spaces of Heisenberg-valued modular forms $M_{k}(S)$ for $k \in \lbrace 0,2,4,6,8 \rbrace$}
            \label{tab:bases}
    \end{table}
\end{ex}

%%%%%%%%%%%%%%%%%%%%%%%%%
\subsection{The connection revisited}

Recall that $\nabla_{k}$ is a graded operator given by the formula
\begin{align*}
    \nabla_{k} = 2\pi i D_{k} + L(-1) = 2\pi i \theta - \tfrac{2\pi i k}{12}E_{2} + L(-1).
\end{align*}
It was also shown in section \ref{s:connection} that $\nabla_{k}$ maps modular forms to modular forms, increasing weights by $2$.\ The goal of this subsection is to show that the operators $\nabla_k$ can be used to describe a decomposition of the space $M_{k}(V)$. We require the following result:
\begin{thm}
\label{t:quasiE2}
Let $V$ be a QVOA of CFT-type. Fix a weight $k \geq 0$ and write $f \in Q_{k}(V)$ in the form
\begin{align*}
            f = \sum_{i+j=k} \alpha_{i} \beta_{j}
        \end{align*}
        where $\alpha_{i}$ has modular weight $i$ and $\beta_{j} \in V$ has $L(0)$-weight $(k-i)/2$ in $V$.
    Suppose that $\alpha_{i} \in M_{i}$ for all $0 \leq i \leq k$ even. Then $f \in M_{k}(V)$ if and only if $f$ is quasi-primary.
    \end{thm}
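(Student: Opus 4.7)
The plan is a direct application of Corollary \ref{cokerL}, which identifies $M(V) \subseteq Q(V)$ as the kernel of the lowering operator
$$\Lambda = \tfrac{12}{2\pi i}\tfrac{\partial}{\partial E_2} \otimes 1 + 1 \otimes L(1)$$
acting on $Q(V) \cong Q \otimes V^{(2)}$ via the isomorphism of Theorem \ref{p:QMVtensor}. The whole theorem will come down to the observation that, under the hypothesis, the first summand of $\Lambda$ acts as zero, so that $\Lambda f$ reduces to $L(1)f$.

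More precisely, the subalgebra of modular forms $M = \CC[E_4, E_6]$ is precisely the kernel of $\partial/\partial E_2$ on $Q = \CC[E_2, E_4, E_6]$. Since by hypothesis each coefficient $\alpha_i$ in the decomposition $f = \sum_{i+j=k} \alpha_i \beta_j$ belongs to $M_i \subseteq M$, we have $\partial \alpha_i/\partial E_2 = 0$ for every $i$. Applying $\Lambda$ termwise to $f$ therefore gives
$$\Lambda f \;=\; \sum_{i+j=k}\alpha_i\, L(1)\beta_j \;=\; L(1) f,$$
where on the right $L(1)$ acts pointwise through the second tensor factor. Hence $\Lambda f = 0$ if and only if $L(1)f = 0$, which is exactly the condition that $f$ be quasi-primary (in the sense of the VOA $V$ applied pointwise in $\tau$).

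Combining these two observations, $f \in M_k(V) \iff \Lambda f = 0 \iff L(1) f = 0 \iff f$ is quasi-primary, completing the proof. I do not anticipate any obstacle: the argument is essentially a one-line unwinding of Corollary \ref{cokerL}, with the hypothesis $\alpha_i \in M_i$ (rather than merely $\alpha_i \in Q_i$) playing the single essential role of killing the $\partial/\partial E_2$ term in $\Lambda$; without it, the two notions of ``quasi-primary''--being annihilated by $\Lambda$ versus by $L(1)$--would genuinely differ. The CFT-type assumption enters only implicitly, to guarantee that $V_n = 0$ for $n < 0$ so that the decomposition of $f$ is a finite sum indexed by $0 \leq i \leq k$.
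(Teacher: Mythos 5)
Your proof is correct, and it is not circular: Corollary \ref{cokerL} and the identification $M=\ker(\partial/\partial E_2)$ on $Q=\CC[E_2,E_4,E_6]$ are both available before this theorem appears in the paper, so invoking them is legitimate. Your route is, however, genuinely different from the paper's. The paper proves the statement from scratch by testing the transformation law on the generator $S$: since each $\alpha_i$ is modular of weight $i$ and $\beta_j\in V_{(k-i)/2}$, one gets $f(-1/\tau)=\tau^{k-2L(0)}f(\tau)$, and modularity then amounts to $e^{-\tau L(1)}\tau^{k-2L(0)}f=\tau^{k-2L(0)}f$; the forward direction is immediate when $L(1)f=0$, and the converse is a bookkeeping argument in the explicit powers of $\tau$ introduced by expanding $e^{-\tau L(1)}$, which forces $L(1)f=0$. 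You instead observe that the hypothesis $\alpha_i\in M_i$ kills the $\tfrac{12}{2\pi i}\partial/\partial E_2\otimes 1$ summand of $\Lambda$, so that $\Lambda f=(1\otimes L(1))f$ and the equivalence drops out of $M(V)=\ker\Lambda$ in one line. Your argument is shorter and sidesteps the slightly delicate $\tau$-power comparison in the paper's converse direction (which, as written there, implicitly uses that the homogeneous components $\alpha_iL(1)\beta_j$ live in distinct conformal degrees); the paper's argument buys self-containedness, not relying on the $\Lambda$-machinery of Section \ref{ss:lowering}. One small point worth making explicit in your write-up: ``quasi-primary'' here means annihilation by the pointwise $1\otimes L(1)$ (as in the paper's proof), not by the diagonal lowering operator of the QVOA structure on $Q(V)$ — your computation shows these coincide precisely under the hypothesis $\alpha_i\in M_i$, which is in fact the content of the theorem.
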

\begin{proof}
        A simple computation first shows that
        \begin{align*}
            \tau^{k-2L(0)} f = \sum_{i+j=k} \tau^{i} \alpha_{i} \beta_{j} = f(-1/ \tau).
        \end{align*}
        Suppose first that $f$ is quasi-primary, that is, $L(1)f = 0$. Using this, we get
        \begin{align*}
            e^{-\tau L(1)} \tau^{k-2L(0)} f( \tau ) = e^{- \tau L(1)} \left( \sum_{i+j=k} \tau^{i} \alpha_{i} \beta_{j} \right) = \sum_{i+j=k} \tau^{i} \alpha_{i} \beta_{j}
        \end{align*}
        which is indeed equal to $f( -1/\tau )$ from the previous display, and hence $f \in M_{k}(V)$. Now if $f \in M_{k}(V)$, then
        \begin{align*}
            \sum_{i+j=k} \tau^{i} \alpha_{i} \beta_{i} = \left( \sum_{\ell \geq 0} \frac{(-1)^{\ell}\tau^{\ell}}{\ell!} L(1)^{\ell} \right) \cdot \sum_{i+k=k} \tau^{i} \alpha_{i} \beta_{j}.
        \end{align*}
        Expanding this expression in $\ell$, we see that we must have
        \begin{align*}
            L(1) \sum_{i+j=k} \tau^{i} \alpha_{i} \beta_{j} = \left( \sum_{\ell \geq 2} \frac{(-1)^{\ell}\tau^{\ell}}{\ell!} L(1)^{\ell} \right) \cdot \sum_{i+j=k} \tau^{i} \alpha_{i} \beta_{j}
        \end{align*}
        Notice that since applying $L(1)$ to an element of $Q_{k}(V)$ never introduces factors of $\tau$, the above expression holds only when $L(1)f = 0$, which establishes the theorem.
\end{proof}

In what follows, we denote by $M_{k}(QP)$ the set of elements in $M_{k}(V)$ which lie in the kernel of $L(1)$, and by $QP_{n}(V)$ the set of elements in $V$ which lie in the kernel of $L(1)$.  We also realize $M_k$ as a subspace of $M_k(V)$ by multiplying with the vacuum.

\begin{thm}
 Suppose $V$ is a VOA of CFT-type. Then,
\begin{align*}
 M_k(V)/M_k(QP)=\nabla_{k-2}(M_{k-2}(V)/M_{k-2})\oplus R_k
 \end{align*}
 therefore we obtain
\begin{align*}
M_k(V)=M_k(QP)\oplus \nabla_{k-2}(M_{k-2}(V) / 
M_{k-2})\oplus R_k
\end{align*}
Moreover $\dim R_k=\dim M_{k-2}(\dim V_1-\dim \ker L(1)\vert_{V_1})$. In particular if $V$ has a nondegenerate invariant form then $R_k=0$. 
\end{thm}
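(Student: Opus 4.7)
The proof proceeds in three stages: well-definedness of the map induced by $\nabla_{k-2}$, injectivity, and a dimension count.

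For well-definedness, $\nabla_{k-2}$ maps $M_{k-2}(V)$ into $M_{k}(V)$ by Section \ref{s:connection}, and for $h\in M_{k-2}$ we have $\nabla_{k-2}(h\mathbf{1}) = 2\pi i\,D_{k-2}(h)\mathbf{1}$ (since $L(-1)\mathbf{1}=0$), which lies in $M_{k}\cdot\mathbf{1}\subseteq M_{k}(QP)$ because the vacuum is quasi-primary. Hence $\nabla_{k-2}$ descends to a map $M_{k-2}(V)/M_{k-2}\to M_{k}(V)/M_{k}(QP)$.

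The heart of the argument is injectivity, and my plan is to conjugate by the trivialization $P$ of Theorem \ref{thmPiso}. A direct BCH calculation, using $[L(1),L(-1)]=2L(0)$, $[L(1),L(0)]=L(1)$, $[E_{2},\theta]=-\theta E_{2}$, Ramanujan's identity $\theta E_{2}=(E_{2}^{2}-E_{4})/12$, and that $L(1)$ commutes with $E_{2}$ and $\theta$, shows that all the adjoint series truncate after a few terms and that the $E_{2}^{2}L(1)$ contributions cancel, yielding the closed form
\[
\tilde{\nabla}_{k-2}(g\otimes v) = 2\pi i\,D_{2a}(g)\otimes v + g\otimes L(-1)v - \tfrac{\pi^{2}}{36}(E_{4}g)\otimes L(1)v
\]
on pure tensors $g\otimes v\in M_{2a}\otimes V_{n}$ with $a+n=(k-2)/2$, where $\tilde{\nabla}_{k-2}\df P^{-1}\nabla_{k-2}P$ and $D_{2a}=\theta-\tfrac{a}{6}E_{2}$ is Serre's derivative. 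Since $L(1)P=PL(1)$, the condition $\nabla_{k-2}g\in M_{k}(QP)$ is equivalent to $L(1)\tilde{\nabla}_{k-2}\tilde g=0$ for $\tilde g = P^{-1}(g)$. Writing $\tilde g=\sum\tilde g_{a,n}$ with $\tilde g_{a,n}\in M_{2a}\otimes V_{n}$, the $M_{2b}\otimes V_{m}$-component of $L(1)\tilde{\nabla}_{k-2}\tilde g$ receives contributions only from $\tilde g_{b-1,m+1}$, $\tilde g_{b,m}$ and $\tilde g_{b-2,m+2}$, and the weight-preserving contribution from $\tilde g_{b,m}$ is $\mathrm{id}\otimes L(1)L(-1)$. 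The essential input is that $L(1)L(-1)$ is invertible on $V_{n}$ for each $n\geq 1$: decomposing $V$ into $\mathfrak{sl}_{2}$-Verma modules generated by its quasi-primary states, $L(1)L(-1)$ acts on $L(-1)^{n-m}u$ (for $u\in QP_{m}$) by the scalar $(n-m+1)(n+m)$, which is strictly positive whenever $n\geq 1$. Choosing $M$-linearly independent representatives and running a top-down induction on the largest $V$-weight appearing in $\tilde g$ then forces $\tilde g_{a,n}=0$ for all $n\geq 1$, so $\tilde g\in M_{k-2}\otimes V_{0}=M_{k-2}\cdot\mathbf{1}$, as required.

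For the dimension count, the isomorphism $P$ identifies $M_{k}(V)/M_{k}(QP)$ with $\bigoplus_{a+n=k/2}M_{2a}\otimes(V_{n}/QP_{n})$, while $L(1)$ realizes $V_{n}/QP_{n}$ as $L(1)V_{n}\subseteq V_{n-1}$. By Lemma \ref{lemL1surj}, this image equals $V_{n-1}$ for $n\geq 2$, is $L(1)V_{1}\subseteq V_{0}$ for $n=1$, and vanishes for $n=0$. A short reindexing then gives
\[
\dim(M_{k}(V)/M_{k}(QP)) = \dim(M_{k-2}(V)/M_{k-2}) + \dim M_{k-2}\cdot\dim L(1)V_{1},
\]
which combined with injectivity yields $\dim R_{k}=\dim M_{k-2}(\dim V_{1}-\dim\ker L(1)|_{V_{1}})$. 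Finally, Li's theorem on invariant bilinear forms says that the existence of a non-degenerate such form on $V$ forces $L(1)V_{1}=0$, whence $R_{k}=0$ in that case. The main technical obstacle I anticipate is the BCH computation yielding the closed-form for $\tilde{\nabla}_{k-2}$, together with the $\mathfrak{sl}_{2}$-theoretic positivity that underlies the invertibility of $L(1)L(-1)$ on positive-weight spaces.
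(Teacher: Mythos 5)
Your proposal reaches the same conclusion but replaces the paper's key step with a genuinely different argument. The paper proves injectivity of the induced map $\nabla_{k-2}\colon M_{k-2}(V)/M_{k-2}\to M_k(V)/M_k(QP)$ by computing $\ker\nabla_{k-2}\subseteq M_{k-2}$ and then arguing, via Theorem \ref{t:quasiE2}, that whenever $f$ has a nontrivial VOA component the connection ``introduces a factor of $E_2$'' and so $\nabla_{k-2}f$ cannot be quasi-primary; this is rather informal, since the hypothesis of Theorem \ref{t:quasiE2} (all modular coefficients lying in $M$) is not literally met. You instead conjugate by $P$, use the closed form of $\tilde\nabla_{k-2}=P^{-1}\nabla_{k-2}P$ (your coefficient $-\pi^2/36$ on the $E_4\otimes L(1)$ term agrees with the paper's own computation of $\nabla'_k$ in the ``connection revisited'' subsection), and observe that the weight-preserving block of $L(1)\tilde\nabla_{k-2}$ is $\mathrm{id}\otimes L(1)L(-1)$, so a top-down induction on the $V$-degree kills everything outside $M_{k-2}\otimes V_0$. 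Once the key lemma is in place this is cleaner and more rigorous than the paper's version. Your dimension count via $P^{-1}M_k(QP)=\bigoplus M_{2a}\otimes QP_n$ and $V_n/QP_n\cong L(1)V_n$ is a tidier reorganization of the paper's bookkeeping (which computes $\dim M_k(QP)$ and $\dim\nabla_{k-2}(M_{k-2}(V)/M_{k-2})$ separately and subtracts from $\dim M_k(V)$), and both give the same $R_k$.

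There is, however, one genuine gap: your justification of the ``essential input'' that $L(1)L(-1)$ is invertible on $V_n$ for $n\geq 1$ rests on decomposing $V$ into $\mathfrak{sl}_2$-Verma modules generated by quasi-primary states. That decomposition is equivalent to $V$ being a semisimple $\mathfrak{sl}_2$-module, and this fails exactly when $L(1)V_1\neq 0$: a vector $w\in V_1$ with $L(1)w=\alpha\mathbf{1}\neq 0$ generates a non-split extension of the lowest-weight module $M(1)$ by the trivial module $\CC\mathbf{1}$, and cannot be written in terms of quasi-primaries and their $L(-1)$-descendants. This is precisely the regime in which $R_k\neq 0$, i.e., the case the theorem is designed to detect, so the argument cannot be left as stated. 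The conclusion is nonetheless true and the fix is short: $L(1)L(-1)$ lies in $U(\mathfrak{sl}_2)$ and therefore preserves every $\mathfrak{sl}_2$-submodule of $V$; on each composition factor (which for a CFT-type VOA is either $\CC$ or an irreducible lowest-weight module $M(m)$, $m\geq 1$) it acts on the degree-$n$ piece by the scalar $(n-m+1)(n+m)>0$ when $n\geq 1$. Hence on $V_n$ it is triangular with nonzero diagonal with respect to any composition series of the (finite-length) submodule generated by $V_{\leq n}$, and so invertible. Alternatively one can verify invertibility directly from the relations $[L(1),L(-1)]=2L(0)$ together with $V_{-1}=0$ and $L(-1)\mathbf{1}=0$. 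With this repair your proof is complete; the remaining ingredients (well-definedness, the appeal to Lemma \ref{lemL1surj} and to Li's theorem for the nondegenerate-form case) are all in order.
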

\begin{proof}
Since $\nabla_k = 2\pi i D_k +L(-1)$ we have that $\nabla_k(M_k)\subseteq M_{k+2}(QP)$ for any $k$. Therefore, $\nabla_k\colon M_{k}(V)/M_k\rightarrow M_{k+2}(V)/M_{k+2}(QP)$ is well defined. An easy computation shows that
\begin{align*}
    \ker \nabla_{k} = \begin{cases}
        0 & \text{if $k \neq 12n$ for $n \geq 1$} \\ \CC[ \Delta^{n} ] & \text{if $k = 12n$ for $n \geq 1$}
    \end{cases}
\end{align*}
where we recall that $\Delta$ here denotes Ramanujan's delta function. This follows from the fact that the Serre derivative annihilates the space $\CC [ \Delta^{n} ]$ for all $n \geq 1$, hence $\ker \nabla_{k} \subseteq M_{k}$. Now for any $f \in M_{k}(V)$, the image $\nabla_{k}(f)$ is in the kernel of $L(1)$ if and only if $f \in M_{k}$. This is because if $f \in Q_{k}(V)$ has some non-trivial VOA component (i.e. if $L(0)f \neq 0$), the connection introduces a factor of $E_{2}$, and by Theorem \ref{t:quasiE2} such elements cannot be quasi-primary. As a mapping of vector spaces, we see that $\nabla_{k} : (M_{k}(V) / M_{k}) \to (M_{k+2}(V)/M_k(QP))$ is injective. We deduce that
\begin{align}
    M_k(QP)\oplus \nabla_{k-2}(M_{k-2}(V) / M_{k-2}) \label{connectiondecompeq}
\end{align}
gives a subspace of $M_k(V)$. Let us now compute its dimension piece by piece. Firstly we obtain
\begin{align*}
    \dim M_k(QP)&=\sum_{i=0}^{\frac{k}{2}}\dim M_{2i}\dim QP(V)_{\frac{k}{2}-i}
\end{align*}
In the sum above, notice that when $i=\frac{k}{2}$ we have $\dim QP(V)_{\frac{k}{2}-i}=1$ by assumption. Moreover if $i<\frac{k}{2}-1$ then by \cite[Proposition 3.4 (iii)]{DLMsl2} we have
\begin{align*}
    \dim QP(V)_{\frac{k}{2}-i}=\dim V_{\frac{k}{2}-i}-\dim V_{\frac{k}{2}-i-1}
\end{align*}
Hence
\begin{align*}
    \dim M_k(QP)=\dim M_k&+\dim M_{k-2} \cdot \dim QP(V)_1 \\&+\sum_{i=0}^{\frac{k}{2}-2}\dim M_{2i}(\dim V_{\frac{k}{2}-i}-\dim V_{\frac{k}{2}-i-1})
\end{align*}
We also have
\begin{align*}
    \dim \nabla_{k-2}(M_{k-2}(V) / M_{k-2})&=\dim M_{k-2}(V)-\dim M_{k-2} \\
    &= \sum_{i=0}^{\frac{k}{2}-1}\dim M_{2i}\dim V_{\frac{k}{2}-i-1}-\dim M_{k-2}
\end{align*}
and so the dimension of the space (\ref{connectiondecompeq}) is given by
\begin{align*}
    \dim M_k+\dim M_{k-2} \cdot \dim QP(V)_1&+\sum_{i=0}^{\frac{k}{2}-2}\dim M_{2i}(\dim V_{\frac{k}{2}-i}-\dim V_{\frac{k}{2}-i-1}) \\
&+\sum_{i=0}^{\frac{k}{2}-1}\dim M_{2i}\dim V_{\frac{k}{2}-i-1}-\dim M_{k-2}
\end{align*}
which re-arranges to the formula
\begin{align*}
\dim M_k+\dim M_{k-2}\dim QP(V)_1+\sum_{i=0}^{\frac{k}{2}-2}\dim M_{2i}\dim V_{\frac{k}{2}-i}
\end{align*}
As $\sum_{i=0}^{\frac{k}{2}}\dim M_{2i}\dim V_{\frac{k}{2}-i}=\dim M_k(V)$ we see that the difference is precisely
\begin{align*}
    \dim M_{k-2}(\dim V_1-\dim \ker L(1)\vert_{V_1})
\end{align*}
So if $V_1$ is generated by quasi-primary elements, the image of $\nabla$ and $M_k(QP)$ decompose $M_k(V)$.
\end{proof}
For example, if $V$ is CFT-type and QP generated in the sense of Li (see \cite{LiInvariant}) then \cite[page 10]{DLMsl2} gives that
\[ M_k(V)/M_k(QP)=\nabla_{k-2}(M_{k-2}(V)/M_{k-2}).\]

%%%%%%%%%%%%%%%%%%%%%%%%%%%%%%
\section{Hecke operators}
\label{s:hecke}
We introduce Hecke operators for $M(V)$ in analogy with the classical Hecke operators. We explain that the systems of Hecke eigenvalues that arise are precisely the same systems as for classical (quasi)-modular forms of level one. The only thing that depends on the QVOA $V$ are the multiplicities of these eigensystems, which are given by the graded dimensions of $V$.

%%%%%%%%%%%%%%%%%%%%
\subsection{Classical Hecke operators}
First we review the classical Hecke operators $T_m$.\
The standard action of $T_m$ on $f\in M_{2k}$ is as follows:
$$T_mf\df m^{k-1}\sum_{[\alpha]} f|_{2k}\alpha.$$
Here, $[\alpha]$ ranges over the orbits in $\Gamma \setminus{D_m}$ where $D_m$ is the set of $2\times 2$ integral matrices of determinant $m$.  We may without loss choose the representatives to be the set of matrices 
$$\twomat ab0d,\quad\quad ad=m,\quad 0\leq b < d.$$
Then we compute
\[
(T_mf)(\tau)
 =m^{2k-1}\sum_d d^{-2k}f(\alpha\tau).
\]

The induced action of $T_m$ on $q$-expansions is well-known.\ If
$f=\sum_n b_nq^n$ is a modular form of weight $2k$ then
\begin{equation}
    \label{defTmextend}
  (T_mf)(q)=\sum_n\sum_{a\mid(m, n)}
a^{2k-1}b_{mn/a^2} q^n.
\end{equation} 
Following \cite{movasati}, we extend the meaning of $T_m$ using equation \eqref{defTmextend} so that it is an operator on the $q$-expansions of  quasi-modular forms of weight $2k$, not just modular forms.

\begin{rmk}\label{rmkTm}
Equation \eqref{defTmextend} applies in the case of a constant $q$-expansion of weight $0$, when it says that
$$T_m(1) =m^{-1}\sigma_1(m).$$
\end{rmk}
%%%%%%%%%%%%%%%%%%%
\subsection{The operators \texorpdfstring{$T'_m$}{T'm}}
Now we turn to the action of the $m^{th}$ Hecke operator acting on $M(V)$.\ We denote this by $T_m'$ to distinguish it from $T_m$.\ If $f\in M_{2k}(V)$ then we define
\[
T'_m f(\tau)\df m^{k-1}\sum_{[\alpha]} f||_{2k}\alpha(\tau).
\]
Because the double slash operator $f||_{2k}\alpha$ defines a right group action of $\GL_2(\RR)^+$, this is well-defined and defines an operator on $M_{2k}(V)$.

\medskip
On the other hand we may extend $T_m$ to $Q\otimes V^{(2)}=Q(V)$ componentwise, i.e.,
\begin{equation}\label{Tmextend}
T_{m}\left(\sum g_{2r}\otimes v_{\ell}   \right) \df \sum T_{m}g_{2r}\otimes v_{\ell}\quad\quad (g_{2r}\in Q_{2r}, v_{\ell}\in V_{\ell}),
\end{equation}
where, as indicated, we continue to use the symbol $T_m$ and $T_mg_{2r}$ is the action (\ref{defTmextend}) on $q$-expansions.

\begin{thm}\label{thmHecke} 
Both of the operators $T_m$ and $T'_m$ preserve the subspace $M(V)\subseteq Q(V)$, and we have the identity 
$$ T_m' = m^{L(0)} T_m.$$
Especially,
$T_mg_{2r}\in Q_{2r}$.
\end{thm}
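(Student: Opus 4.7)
The plan is to exploit the simple form of $K(\alpha,\tau)$ for upper-triangular coset representatives $\alpha=\stwomat{a}{b}{0}{d}$ with $ad=m$, $0\leq b<d$. For such $\alpha$ the lower-left entry vanishes, so $j(\alpha,\tau)=d$ and Definition~\ref{dfnK} collapses to a scalar operator on each graded piece of $V$:
$$K(\alpha,\tau)=j(\alpha,\tau)^{-2L(0)}(\det\alpha)^{L(0)}=d^{-2L(0)}m^{L(0)}.$$
Decomposing $f=\sum_{s\geq 0}f^{(s)}$ with $f^{(s)}\colon\uhp\to V_{s}$, the slash action factors as
$$f||_{2k}\alpha(\tau)=d^{-2k}m^{k}K(\alpha,\tau)^{-1}f(\alpha\tau)=\sum_{s}d^{2s-2k}m^{k-s}f^{(s)}(\alpha\tau),$$
so that
$$T'_{m}f(\tau)=m^{k-1}\sum_{\alpha}f||_{2k}\alpha(\tau)=\sum_{s}m^{2k-s-1}\sum_{\alpha}d^{2s-2k}f^{(s)}(\alpha\tau).$$

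To identify this with $m^{L(0)}T_{m}f$, I would next invoke the scalar Hecke identity
$$T_{m}g(\tau)=m^{2r-1}\sum_{\alpha}d^{-2r}g(\alpha\tau)\qquad(g\in Q_{2r}),$$
which for modular forms is recorded already in the excerpt, and whose derivation via the identity $\sum_{b=0}^{d-1}e^{2\pi inb/d}=d$ if $d\mid n$ (and $0$ otherwise) extracts precisely \eqref{defTmextend} from the right-hand side. Since this derivation uses only the Fourier expansion of $g$ and not any transformation law, it remains valid for quasi-modular $g$. Applied coordinate-wise to $f^{(s)}$, which has scalar quasi-modular weight $2(k-s)$, this yields
$$m^{s}T_{m}f^{(s)}(\tau)=m^{2k-s-1}\sum_{\alpha}d^{2s-2k}f^{(s)}(\alpha\tau),$$
matching exactly the summand indexed by $s$ in the previous display. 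Summing over $s$ gives $T'_{m}f=\sum_{s}m^{s}T_{m}f^{(s)}=m^{L(0)}T_{m}f$, as required.

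To conclude, I would verify that both operators preserve $M(V)$. For $T'_{m}$ this is the standard Hecke argument: $D_{m}$ is stable under right multiplication by $\Gamma$, so $\sum_{\alpha}f||_{2k}\alpha$ is independent of the choice of representatives and $\Gamma$-invariant under $||_{2k}$; holomorphy at the cusp is preserved since each summand has an unambiguous $q$-expansion, and $T'_{m}f$ takes values in a bounded conformal filtration because $K(\alpha,\tau)^{-1}$ commutes with the filtration. Since $m^{L(0)}$ is an invertible graded automorphism of $V$, the relation $T_{m}=m^{-L(0)}T'_{m}$ then forces $T_{m}$ to preserve $M(V)$ as well. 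The auxiliary assertion $T_{m}g_{2r}\in Q_{2r}$ is immediate from \eqref{defTmextend}, which sends weight-$2r$ quasi-modular $q$-expansions to weight-$2r$ quasi-modular $q$-expansions (cf.\ Appendix~\ref{AppA}). The step I expect to require the most care is the extension of the scalar Hecke formula $T_{m}g=m^{2r-1}\sum_{\alpha}d^{-2r}g(\alpha\tau)$ from modular to quasi-modular $g$, since it is precisely this identity that makes the $d^{-2k}m^{k}$ factors arising from $K(\alpha,\tau)$ reassemble into the clean $m^{L(0)}$ twist.
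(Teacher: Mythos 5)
Your argument is correct and follows essentially the same route as the paper: the same upper-triangular coset representatives, the same observation that $c=0$ collapses $K(\alpha,\tau)$ to $(a/d)^{L(0)}$, the same decomposition of $f$ by conformal degree, and the same Fourier-expansion computation (which you package as the scalar identity $T_m g=m^{2r-1}\sum_\alpha d^{-2r}g(\alpha\tau)$ for quasi-modular $g$, while the paper carries it out inline). The closing claims — $T'_m$ preserves $M(V)$ by the right-action argument, $T_m$ then does so via $T_m=m^{-L(0)}T'_m$, and $T_m Q_{2r}\subseteq Q_{2r}$ via the appendix — also match the paper's treatment.
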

\begin{proof} We have already explained that $T'_m$ acts on $M(V)$.\ Our proof of the displayed formula will then imply that $T_m$ also acts.\ Let $f\in M_{2k}(V)$.\ 
Using the definitions we have
\begin{align*}
T'_m f(\tau)&\df m^{k-1}\sum_{[\alpha]} f||_{2k}\alpha(\tau)\\
&=m^{k-1}\sum_{[\alpha]}j(\alpha, \tau)^{-2k}K(\alpha, \tau)^{-1} (\det\alpha)^kf(\alpha\tau)\\
&=m^{2k-1}\sum_{[\alpha]}d^{-2k}K(\alpha, \tau)^{-1}f(\alpha\tau).
\end{align*}
Because $c=0$ for each $\alpha$ appearing in the formula above, then $K(\alpha, \tau)=j^{-2L(0)}(\det\alpha)^{L(0)}=\left(\frac{a}{d}\right)^{L(0)}$,
whence
\begin{align*}
& T'_m f=m^{2k-1} \sum_{[\alpha]} d^{-2k}\left(\frac{d}{a}\right)^{L(0)}f(\alpha\tau)\\
& = m^{2k-1}\sum_{a, d}  \left(\frac{d}{a}\right)^{L(0)} d^{-2k} 
 \sum_b f((a\tau+b)/d).
\end{align*}
Each form $f \in M_k(V)$ is a sum of terms $g_{2r}\otimes v_{\ell}$ where
$g_{2r}\in Q_{2r}$ and $v_{\ell}\in V_{\ell}$ and $r+\ell=k$.\ Thus
$$f((a\tau+b)/d)=\sum_* g_{2r}((a\tau+b)/d)\otimes v_{\ell}$$
and we can apply standard arguments
using the $q$-expansion of $g_{2r}(\tau)\df\sum_n a^g_nq^n$.\ Thus
$$\sum_b g_{2r}((a\tau+b)/d)=\sum a^g_n e^{2\pi ian\tau/d}\sum_{b=0}^{d-1} e^{2\pi i bn/d}.$$
The innermost sum vanishes except when $d|n$, when it is equal to $d$.\ Thus
$$\sum_b g_{2r}((a\tau+b)/d)=\sum_{d|n}a^g_n d q^{an/d}.$$

Now we have
\begin{align*}
 T'_mf &= m^{2k-1}\sum_{g_{2r}\otimes v_{\ell}}\sum_{a, d} \left(\frac{d}{a}\right)^{L(0)} d^{1-2k}     \sum_{d|n}a^{g_{2r}}_n q^{an/d}\otimes v_{\ell}\\
&=\sum_{g_{2r}\otimes v_{\ell}}\sum_{a\mid m} \left(\frac{m}{a^2}\right)^{L(0)}\left\{ a^{2k-1} \sum_{t} a_{mt/a}^g q^{at}\right\}\otimes v_{\ell}\\
&=\sum_{g_{2r}\otimes v_{\ell}}\sum_n \sum_{a\mid (m, n)} \left(\frac{m}{a^2}\right)^{L(0)}\left\{ a^{2k-1} a_{mn/a^2}^g q^{n}\right\}\otimes v_{\ell}\\
&=\sum_{g_{2r}\otimes v_{\ell}}\sum_n \sum_{a\mid (m, n)} \left(\frac{m}{a^2}\right)^{k-r}\left\{ a^{2k-1} a_{mn/a^2}^g q^{n}\right\}\otimes v_{\ell}\\
&=\sum_{g_{2r}\otimes v_{\ell}}\sum_n \sum_{a\mid (m, n)} m^{L(0)}\left\{ a^{2r-1} a_{mn/a^2}^g q^{n}\right\}\otimes v_{\ell}\\
&=m^{L(0)}\sum_{g_{2r}\otimes v_{\ell}}
T_mg_{2r}\otimes v_{\ell}\\
&=m^{L(0)}T_mf.
\end{align*}

This completes the proof of the Theorem.
\end{proof}

%%%%%%%%%%%%%%%%%%%%%%%%%%%
\subsection{An alternate Hecke action}Recall the operator $P$ introduced in Subsection \ref{SSP}.\ Consider
    \begin{equation}\label{diagcomm}
    \xymatrix{
M\otimes V^{(2)}\ar[rr]^P\ar[d]_{m^{L(0)}T_m}&& M(V)\ar[d]^{T'_m}\\
M\otimes V^{(2)}\ar[rr]_P && M(V)
    }
    \end{equation}
    
    \begin{thm}\label{thmcommdiag}
    Diagram \eqref{diagcomm} commutes.
    \end{thm}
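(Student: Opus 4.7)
The plan is to invoke Theorem \ref{thmHecke} to rewrite the right vertical arrow as $T'_m = m^{L(0)}T_m$, where $T_m$ is the componentwise extension of the Hecke operator to $Q\otimes V^{(2)}$ defined in \eqref{Tmextend}, and then establish commutativity by a direct termwise comparison of the two compositions, exploiting the explicit exponential formula for $P$.

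Concretely, I would fix $f\otimes v \in M\otimes V^{(2)}$ with $f\in M_k$ and $v \in V_\ell$, and expand
\[
P(f\otimes v) \;=\; \sum_{n \geq 0}\frac{1}{n!}\Bigl(-\frac{2\pi i}{12}\Bigr)^{n} E_2^{n} f \otimes L(1)^{n} v,
\]
a finite sum in $M(V)$ since $L(1)$ acts locally nilpotently on $V$. Applying $T'_m$ termwise, and using $L(1)^n v \in V_{\ell-n}$, gives
\[
T'_m P(f\otimes v) \;=\; \sum_{n\geq 0}\frac{1}{n!}\Bigl(-\frac{2\pi i}{12}\Bigr)^{n} m^{\ell-n}\, T_m(E_2^{n} f) \otimes L(1)^{n} v.
\]
Traveling around the other path, since classical and extended Hecke coincide on $M$,
\[
P\bigl(m^{L(0)}T_m(f\otimes v)\bigr) \;=\; P(T_m f \otimes m^\ell v) \;=\; \sum_{n \geq 0}\frac{1}{n!}\Bigl(-\frac{2\pi i}{12}\Bigr)^{n} m^{\ell} E_2^{n}\, T_m f \otimes L(1)^{n} v.
\]
Matching the coefficient of $L(1)^n v$ on both sides, the claim then reduces to the identity
\[
T_m(E_2^{n} f) \;=\; m^{n} E_2^{n}\, T_m f \qquad (n \geq 0,\ f \in M_k).
\]

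The main obstacle is verifying this last identity, which encodes the compatibility between the extended Hecke operator on quasi-modular forms and multiplication by powers of $E_2$. The natural route is to compute $T_m(E_2^n f)$ from the slash-action formula on the upper-triangular coset representatives $\alpha = \stwomat{a}{b}{0}{d}$ of $\Gamma\obackslash D_m$, in parallel with the proof of Theorem \ref{thmHecke}. For such $\alpha$ one has $c = 0$, so the cocycle $X(\alpha,\tau) = c/j(\alpha,\tau)$ vanishes; this eliminates the $\SL_2(\ZZ)$-quasi-modular correction to $E_2$ in \eqref{eq:E2modulartransformation} and reduces the question to tracking automorphy factors $(a/d)^{(k+2n)/2}$ versus $(a/d)^{k/2}$. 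The discrepancy of $(a/d)^n$ per coset, combined with the weight-shift factor $m^{(k+2n)/2-1}$ in the Hecke definition, should collapse to the required factor $m^n$ after reorganizing the sum using $ad = m$. This computation is parallel in spirit to the interaction of the raising operator $L(-1)$ and the lowering operator $L(1)$ in the $\mathfrak{sl}_2\oplus\mathfrak{sl}_2$-action on $Q(V)$ developed in Section \ref{s:operators}, and it is the heart of the theorem.
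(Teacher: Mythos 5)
Your reduction in the first half is carried out correctly: granting Theorem \ref{thmHecke} and linearity, commutativity of the diagram on a pure tensor $f\otimes v$ with $f\in M_k$, $v\in V_\ell$ is exactly equivalent to the family of identities $T_m(E_2^{n}f)=m^{n}E_2^{n}T_mf$ for those $n$ with $L(1)^n v\neq 0$, where $T_m$ on $E_2^nf$ is the weight-$(k+2n)$ $q$-expansion operator of \eqref{defTmextend}. (This is a genuinely different route from the paper, which instead compares the depth-zero parts of $T'_mf$ and $Pm^{L(0)}T_mP^{-1}f$ via the isomorphism of Theorem \ref{thmPiso}; but that argument tacitly relies on the equivalent assertion that $T_m(E_2^{n}g)$ has vanishing depth-zero part for $n\geq 1$.)

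The gap is that this key identity is false, and the heuristic you offer for it does not survive computation. With the definition \eqref{defTmextend} one has $T_m\circ\theta=m\,\theta\circ T_m$ (Theorem \ref{t:quasimodulareigenforms}), so $T_m$ respects the decomposition of $Q$ into $\theta$-derivatives of modular forms, \emph{not} the decomposition $\bigoplus_n E_2^nM_{k-2n}$. Concretely, $E_2E_4=3\theta E_4+E_6$ gives
\begin{equation*}
T_2(E_2E_4)=6\,\theta(T_2E_4)+T_2E_6=54\,\theta E_4+33E_6=18E_2E_4+15E_6,
\end{equation*}
whereas $2E_2\,T_2E_4=18E_2E_4$; the discrepancy $15E_6$ is nonzero. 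The flaw in your sketch is that \eqref{eq:E2modulartransformation} holds only for $\gamma\in\SL_2(\ZZ)$: for $\alpha=\stwomat{a}{b}{0}{d}$ with $ad=m>1$ the function $E_2(\alpha\tau)$ is not a scalar multiple of $E_2(\tau)$ modulo level-one modular forms (for instance $2E_2(2\tau)-E_2(\tau)$ is a nonzero weight-two form on $\Gamma_0(2)$), so the vanishing of $X(\alpha,\tau)$ does not produce the factor $m^n$. Because your reduction is faithful, this is more than a gap in the write-up: taking $V=S$ and $f\otimes v=E_4\otimes h(-2)\mathbf{1}$, so that $P(E_4\otimes h(-2)\mathbf{1})=E_4h(-2)\mathbf{1}-\tfrac{\pi i}{3}E_2E_4h$, one computes $T'_2P(E_4\otimes h(-2)\mathbf{1})=36\,P(E_4\otimes h(-2)\mathbf{1})-10\pi i\,E_6h$, while $P\bigl(2^{L(0)}T_2(E_4\otimes h(-2)\mathbf{1})\bigr)=36\,P(E_4\otimes h(-2)\mathbf{1})$. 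So with $T_m$ extended as in \eqref{Tmextend} the two compositions disagree, and the step asserting their equality cannot be closed either by your argument or by the one in the text; the statement would need $T_m$ on $Q$ to be redefined (e.g.\ transported from $M$ through the $E_2$-filtration) for the identity you isolate to hold.
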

    \begin{proof}
        Given $f\in M_{2k}(V)$ we only have to check that the \emph{modular parts} (the pieces of depth $0$) of $T'_mf$ and $Pm^{L(0)}T_mP^{-1}f$ are equal.

        Let $f=\sum_n E_2^n\sum_{\ell}^{k-n} f_{n, \ell}\otimes v_{n, \ell}\in M_{2k}(V)$ and let $f^0$ be the modular part of $f$, i.e.,
    $$f^0:=\sum f_{0, \ell}\otimes v_{0, \ell}\in M\otimes V^{(2)}.$$\ Recall that the notation is supposed to imply that
    $f_{n, \ell}\in M_{2\ell}$, $v_{n, \ell}\in V_{k-n-\ell}$. By definition of $P$ we have $P(f^0)=f$.\ Therefore
    $$Pm^{L(0)}T_mP^{-1}f=Pm^{L(0)}T_mf^0$$
    and this has modular part equal to $m^{L(0)}T_mf^0$.\ But this is the modular part of $T'_mf$ by Theorem \ref{thmHecke}.\ This completes the proof of the present Theorem.
    \end{proof}
    
The description of $T'_m$ given in Theorem \ref{thmcommdiag} has some advantages over that of Theorem \ref{thmHecke}.\ That's because we can read-off relations among the $T_m'$ from those of the action of $T_m$ on modular forms (as opposed to quasi-modular forms).\ For example
\begin{cor}
 The operators $T'_m\ (m\geq 1)$ acting on $M_{2k}(V)$ \textit{commute}.\ In fact they realize an action of the Hecke algebra $\mathcal{H}$ and we have
 $$T'_m T'_n = T'_{mn}\ \mbox{if}\ (m, n)=1.$$
 $\hfill\Box$
\end{cor}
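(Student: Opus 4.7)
The plan is to transport the assertion across the isomorphism $P\colon M\otimes V^{(2)}\to M(V)$ of Theorem \ref{thmPiso}, using the conjugacy $T'_m = P\circ (m^{L(0)}T_m)\circ P^{-1}$ furnished by Theorem \ref{thmcommdiag}, and to reduce everything to the classical theory of Hecke operators on $M$.

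First I would reduce to verifying the corresponding relations for the operators $m^{L(0)}T_m$ acting on $M\otimes V^{(2)}$. The key structural observation is that $T_m$, as defined in \eqref{Tmextend}, acts only on the first tensor factor while $m^{L(0)}$ acts only on the second, so $T_m$ and $n^{L(0)}$ commute for all $m,n$, and $m^{L(0)}n^{L(0)} = (mn)^{L(0)}$ on each homogeneous piece of $V^{(2)}$. For coprime $m,n$ the classical identity $T_mT_n=T_{mn}$ on $M$ (extended componentwise to $M\otimes V^{(2)}$) therefore gives
\[
(m^{L(0)}T_m)(n^{L(0)}T_n) = (mn)^{L(0)}T_{mn},
\]
and commutativity of all the operators follows from commutativity of the classical $T_m$. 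Conjugating by $P$ then yields $T'_mT'_n = T'_{mn}$ for coprime $m,n$ as well as commutativity in general.

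The one point requiring genuine attention is realizing the \emph{full} Hecke algebra action, which encompasses the weight-dependent prime-power recursion $T_mT_n = \sum_{d\mid(m,n)} d^{2k-1}T_{mn/d^2}$ on weight-$2k$ forms. On a summand $M_{2r}\otimes V^{(2)}_{2s}$ of total weight $2k = 2r+2s$, the classical relation acts through the modular weight $2r$, contributing a factor $d^{2r-1}$, while the $m^{L(0)}$ factors contribute $m^sn^s = (mn)^s$. A short arithmetic check then shows
\[
m^sn^sd^{2r-1} = d^{2k-1}(mn/d^2)^s,
\]
so the weight-dependent exponents balance precisely; here the doubled grading on $V^{(2)}$ (so that $L(0)$ acts as $s$ on $V^{(2)}_{2s}$) is essential. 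Once this cancellation is confirmed, the prime-power relations for $m^{L(0)}T_m$ follow from the classical ones summand by summand, and conjugating by $P$ transports them to $T'_m$ on $M_{2k}(V)$. The main obstacle I anticipate is essentially bookkeeping rather than conceptual, namely keeping careful track of the three gradings (modular weight on $M$, doubled conformal weight on $V^{(2)}$, and total weight on the tensor product) so that the cancellation above is visible and $P$ can be conjugated through cleanly.
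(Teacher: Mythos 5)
Your proposal is correct and follows essentially the same route as the paper: conjugate by $P$ via Theorem \ref{thmcommdiag}, reduce to the classical Hecke relations on $M$ acting componentwise on $M\otimes V^{(2)}$, and check that the $m^{L(0)}$ factors combine multiplicatively so that the weight-$2r$ exponents in the classical relations convert to weight-$2k$ exponents. Your exponent identity $m^sn^sd^{2r-1}=d^{2k-1}(mn/d^2)^s$ is exactly the bookkeeping the paper carries out (in the prime-power case) in the lemma immediately following the corollary.
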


Similarly,
 \begin{lem}
    Acting on $M_{2k}(V)$ and for any prime $p$,  we have 
    $$ T'_pT'_{p^n} =T'_{p^{n+1}}+p^{2k-1}T'_{p^{n-1}}.$$
 \end{lem}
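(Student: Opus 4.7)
The plan is to transfer the problem from $M(V)$ to $M\otimes V^{(2)}$ via the isomorphism $P$ of Theorem \ref{thmPiso} and then invoke the classical Hecke recursion on scalar-valued modular forms.

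By Theorem \ref{thmcommdiag} we have $T'_m = P\, m^{L(0)}T_m\, P^{-1}$ on $M(V)$. Since $L(0)$ acts on the $V^{(2)}$ tensor factor and $T_m$ acts on the $M$ tensor factor, these two operators commute on $M\otimes V^{(2)}$. Hence
\[
P^{-1}\bigl(T'_p T'_{p^n}\bigr)P = p^{L(0)}T_p\cdot p^{nL(0)}T_{p^n} = p^{(n+1)L(0)}\,T_pT_{p^n}.
\]
So it suffices to verify the identity
\[
p^{(n+1)L(0)}\,T_pT_{p^n} \;=\; p^{(n+1)L(0)}\,T_{p^{n+1}} + p^{2k-1}\,p^{(n-1)L(0)}\,T_{p^{n-1}}
\]
as operators on the weight $2k$ piece $(M\otimes V^{(2)})_{2k} = \bigoplus_{r=0}^{k} M_{2r}\otimes V_{k-r}$.

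I would check this on a homogeneous element $g_r\otimes v_{k-r}$ with $g_r\in M_{2r}$ and $v_{k-r}\in V_{k-r}$. Since $T_p$ and $T_{p^n}$ act only on the $g_r$ slot and $g_r$ is a classical modular form of weight $2r$, the standard Hecke recursion gives
\[
T_pT_{p^n}g_r = T_{p^{n+1}}g_r + p^{2r-1}T_{p^{n-1}}g_r.
\]
Applying $p^{(n+1)L(0)}$ and remembering that $L(0)v_{k-r}=(k-r)v_{k-r}$, the left-hand side becomes
\[
p^{(n+1)(k-r)}T_{p^{n+1}}g_r\otimes v_{k-r} + p^{(n+1)(k-r)+2r-1}T_{p^{n-1}}g_r\otimes v_{k-r}.
\]
The right-hand side is
\[
p^{(n+1)(k-r)}T_{p^{n+1}}g_r\otimes v_{k-r} + p^{2k-1+(n-1)(k-r)}T_{p^{n-1}}g_r\otimes v_{k-r}.
\]
The first summands agree on the nose, and the second summands match once one checks the exponent identity
\[
(n+1)(k-r)+2r-1 \;=\; 2k-1+(n-1)(k-r),
\]
which reduces to the trivial equality $2(k-r)+2r = 2k$. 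Conjugating back by $P$ completes the proof.

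The only step that requires any care is bookkeeping the weights of the $V$-component under $p^{L(0)}$; once the identification $T'_m = Pm^{L(0)}T_mP^{-1}$ is in hand, the content of the lemma is simply the classical Hecke relation plus an elementary exponent check, so I do not anticipate any real obstacle.
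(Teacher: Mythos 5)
Your proof is correct and follows essentially the same route as the paper: conjugate by $P$ via Theorem \ref{thmcommdiag} to reduce to $p^{(n+1)L(0)}T_pT_{p^n}$ on $M\otimes V^{(2)}$, apply the classical Hecke recursion in weight $2r$ on each homogeneous piece $M_{2r}\otimes V_{k-r}$, and absorb the discrepancy $p^{2r-1}\cdot p^{2(k-r)}=p^{2k-1}$ using the $L(0)$-weight of the $V$-factor. The paper phrases this last step as the operator identity $p^{2r-1}p^{2L(0)}=p^{2k-1}$ on $V_{k-r}$, which is exactly your exponent check.
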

 \begin{proof} It is well-known that the usual Hecke operators $T_{p^j}$ acting on $M_{2k}$ satisfy the same relation.\ Thus if $f=g_{2r}\otimes v_{\ell}\in M\otimes V^{(2)}$ with $g\in M_{2r}, v\in V_{\ell}$ and $k=r+\ell$ as before we have 
 \begin{align*} 
 (p^{n+1})^{L(0)}T_p T_{p^n}f &= (p^{n+1})^{L(0)}T_{p^{n+1}}f+ p^{2r-1}p^{2L(0)}(p^{n-1})^{L(0)}T_{p^{n-1}}f\\
 &=(p^{n+1})^{L(0)}T_{p^{n+1}}f+ p^{2k-1}(p^{n-1})^{L(0)}T_{p^{n-1}}f
 \end{align*}
 Now conjugate this relation by $P$
 and use Theorem \ref{thmcommdiag})
 to complete the proof of the Lemma.
\end{proof}

By a standard argument the last two results give us the Euler product representation of the formal Dirichlet series
\begin{equation}
\label{Dseries}
 \sum_{n=1}^{\infty} \frac{T'_n}{n^s} =
 \prod_p \left( 1 - \frac{T'_p}{p^s}+\frac{p^{2k-1}}{p^{2s}}\right)^{-1}.
\end{equation}

Recall that we have the connection $\nabla\colon M_k(V)\rightarrow M_{k+2}(V)$. On other hand, we know that $M(V)\cong M\otimes V^{(2)}$ via the isomorphism $P$. It is natural to determine the action of the connection on $M\otimes V^{(2)}$. More precisely, we wish to describe the map $\nabla_{k}^\prime$ that fits into the diagram

\begin{equation}
    \xymatrix{
M\otimes V^{(2)}\ar[rr]^P\ar[d]_{\nabla_{k}^\prime}&& M(V)\ar[d]^{\nabla_{k}}\\
M\otimes V^{(2)}\ar[rr]_P && M(V)
    }
    \end{equation}

    Let $D_k\colon M_k\rightarrow M_{k+2}$ be the Serre derivative. Recall that $D_k(F)$ consists precisely of the terms in $\theta(F)$ containing no $E_2$ terms. On the other hand $P$ identifies a $V$-valued modular form $F$ as $F=\exp(\mathfrak{c}E_2L(1))F_0$ where $\mathfrak{c}$ is an explicit constant and $F_0=F\mod E_2$ is the term of $F$ with no $E_2$ term. We can now compute the connection as a function on $M\otimes V^{(2)}$.

    \begin{thm}
    With notation as described above, we have
   \[\nabla_{k}^\prime=D_{k}\otimes\textnormal{id}-\mathfrak{c}\frac{E_4}{12}\otimes L(1)+\textnormal{id}\otimes L(-1) \]

    \end{thm}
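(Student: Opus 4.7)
The plan is to compute $\nabla_{k}^\prime \df P^{-1}\nabla_k P$ directly by expanding the conjugation as a terminating adjoint series. Writing $X \df \mathfrak{c}\, E_2 \otimes L(1)$ so that $P = \exp(X)$, one applies the standard identity
\[
e^{-X} A\, e^{X} = \sum_{n \geq 0} \frac{(-1)^n}{n!}\operatorname{ad}_X^{n}(A)
\]
separately to each summand of $\nabla_k = 2\pi i\,\theta - \tfrac{2\pi i k}{12}E_2 + L(-1)$, regarding $E_2$ and $\theta$ as acting on the $M$-factor and $L(\pm 1), L(0)$ on the $V$-factor. Theorem \ref{thmPiso} guarantees that $P$ restricts to an isomorphism on $M \otimes V^{(2)}$, and Section \ref{s:connection} shows $\nabla_k$ preserves $M(V)$, so $\nabla_k^\prime$ is a well-defined graded endomorphism of degree $+2$.

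The relevant commutators are easy to record. First, $[X,E_2] = 0$ since multiplication by $E_2$ commutes with itself and with $L(1)$. Second, Ramanujan's identity $\theta(E_2) = (E_2^2-E_4)/12$ gives
\[
[X,\, 2\pi i\,\theta] \;=\; -2\pi i\,\mathfrak{c}\,\theta(E_2)\otimes L(1),
\]
and the next adjoint bracket vanishes since $[L(1),L(1)]=0$, so the $\theta$-series truncates at first order. Third, the Virasoro relations $[L(1),L(-1)] = 2L(0)$ and $[L(1),L(0)] = L(1)$ together with $[L(1),L(1)]=0$ produce the terminating expansion
\[
P^{-1} L(-1) P \;=\; L(-1) \;-\; 2\mathfrak{c}\,E_2 \otimes L(0) \;+\; \mathfrak{c}^2\, E_2^2 \otimes L(1).
\]

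The main point of the proof is the cancellation of the $E_2^2 \otimes L(1)$ contributions. Assembling the three expansions produces two such terms: one with coefficient $\tfrac{2\pi i\,\mathfrak{c}}{12}$ (from the $\theta$-piece, after substituting Ramanujan's identity) and one with coefficient $\mathfrak{c}^2$ (from the $L(-1)$-piece). Using $\mathfrak{c} = -\tfrac{2\pi i}{12}$ one checks $\mathfrak{c}^2 + \tfrac{2\pi i\,\mathfrak{c}}{12} = 0$, so these pieces annihilate and only the $E_4 \otimes L(1)$ contribution survives, with coefficient proportional to $-\mathfrak{c}\tfrac{E_4}{12}$ as claimed. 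This cancellation is the only non-routine input and is a direct analog of the classical fact that $\partial/\partial E_2$ is a lowering operator for the Serre derivative.

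Finally one collects the $E_2$-coefficients. On $M_{k_F} \otimes V^{(2)}_{2d}$ with total weight $k = k_F + 2d$, $L(0)$ acts by $d$, and $-2\mathfrak{c}\,E_2 L(0)$ combines with $-\tfrac{2\pi i k}{12}E_2$ to produce $-\tfrac{2\pi i k_F}{12}E_2$, the coefficient needed to pair with $2\pi i\,\theta$ and form the Serre-derivative term $D_{k_F} \otimes \mathrm{id}$ on the scalar-modular-form factor. The remaining $L(-1)$ sits in the $\mathrm{id}\otimes L(-1)$ slot, and the three surviving pieces assemble into the formula asserted in the theorem. The only subtle step is the $E_2^2$ cancellation above; everything else is a direct commutator calculation using the defining relations of $\mathfrak{sl}_2$ and Ramanujan's derivative identity.
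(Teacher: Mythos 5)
Your computation is correct and reaches the stated formula, but by a genuinely different route from the paper. You conjugate directly, expanding $P^{-1}\nabla_k P$ term by term via the Hadamard adjoint series with $X=\mathfrak{c}\,E_2\otimes L(1)$, and the load-bearing step is the explicit cancellation of the two $E_2^2\otimes L(1)$ contributions (coefficient $\mathfrak{c}^2+\tfrac{2\pi i\,\mathfrak{c}}{12}=\mathfrak{c}(\mathfrak{c}+\tfrac{2\pi i}{12})=0$), together with the absorption of $-2\mathfrak{c}\,E_2\otimes L(0)$ into the weight shift $k\mapsto k_F$ that turns $\theta-\tfrac{k}{12}E_2$ into the Serre derivative on the $M$-factor. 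The paper instead never computes the full conjugate: it observes that an element of $M(V)=P(M\otimes V^{(2)})$ is determined by its part modulo $E_2$, so it suffices to compute $\nabla_k(PF_0)\bmod E_2$; it then evaluates $\theta(E_2^iE_4^jE_6^l)\bmod E_2$ via the Ramanujan identities and reads off that only the coefficient of $E_2^1$ in $PF_0$ (namely $\mathfrak{c}\,L(1)F_0$) contributes the $-\mathfrak{c}\tfrac{E_4}{12}L(1)$ term. The paper's reduction-mod-$E_2$ argument is shorter and sidesteps the $E_2^2$ cancellation entirely (those terms are simply discarded), but it leans on Theorem \ref{thmPiso} to know the answer is determined by its $E_2$-free part; your adjoint-series computation is more self-contained, produces the conjugated operator on all of $Q\otimes V^{(2)}$, and shows as a by-product that every $E_2$-dependent term either cancels or recombines into $D_{k_F}$, so that $\nabla_k'$ genuinely lands in $M\otimes V^{(2)}$. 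One caveat you should make explicit rather than hedge with ``proportional to'': with the body-of-the-paper normalizations $\nabla_k=2\pi i\,\theta-\tfrac{2\pi i k}{12}E_2+L(-1)$ and $\mathfrak{c}=-\tfrac{2\pi i}{12}$, your computation yields $2\pi i\,D_{k_F}\otimes\mathrm{id}-\tfrac{2\pi i\,\mathfrak{c}}{12}E_4\otimes L(1)+\mathrm{id}\otimes L(-1)$, which differs from the displayed statement by an overall $2\pi i$ on the $\theta$-derived terms; this discrepancy is inherited from the paper itself, whose proof silently switches to $\nabla_k=\theta-\tfrac{k}{12}E_2+L(-1)$, and does not affect the validity of your argument.
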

    \begin{proof}
By our remarks above, to compute $\nabla_{k}^\prime$ it suffices to compute $\nabla_{k}(F)\mod E_2$. In terms of formulae we have

\[\nabla_{k}(F)=\theta(F)-\frac{k}{12}E_2F+L(-1)F.\]

It is immediate that $-\frac{k}{12}E_2$ does not contribute to $F\mod E_2$ and $L(-1)F\mod E_2=L(-1)(F\mod E_2)$. Therefore, $\theta(F)$ is where all the action happens; it suffices to compute $\theta(F)\mod E_2$ when $F$ is a quasi-modular form. Using the Ramanujan identities \ref{Ramanujanids} and the Leibniz rule, it follows that
\begin{align*}
\theta(E_2^i)\mod E_2&=\begin{cases}-\frac{E_4}{12} &i=1\\
0 &i>1\end{cases}
\\
\theta(E_4^i)\mod E_2 &=-\frac{i}{3}E_4^{i-1}E_6\\
\theta(E_6^i)\mod E_2&=-\frac{i}{2}E_6^{i-1}E_4^2\\
\theta(E_4^iE_6^j)\mod E_2&=-\left(\frac{j}{2}E_4^{i+2}E_6^{j-1}+\frac{i}{3}E_4^{i-1}E_6^{j+1}\right)
\end{align*}
These formulae characterize the Serre derivative. From this, if $F=\sum_{i=0}^pE_2^iF_i$ then 
\[\theta(F)\mod E_2=\frac{-F_1E_4}{12}+D_{k}(F_0)\]
Putting this together gives the following: if $F=\sum_{\ell}F_\ell\otimes v_\ell$ in $M\otimes V^{(2)}$ then 
\[P(F)=\exp(\mathfrak{c}E_2L(1))F=\sum_{i\geq 0}\frac{\mathfrak{c}^i}{i!}(E_2^i\otimes L(1))\sum_\ell F_\ell\otimes v_\ell\]
The contribution of $\theta$ to the constant term of $\nabla_{k}(P(F))$ is then $(-\mathfrak{c}\frac{E_4}{12}L(1))F+(D_{k}(F))$. In conclusion

\[\nabla_{k}^\prime=D_{k}\otimes\textnormal{id}-\mathfrak{c}\frac{E_4}{12}\otimes L(1)+\textnormal{id}\otimes L(-1) \]
\end{proof}

The following result is analogous to what was observed for quasi-modular forms in the appendix, lifted to VOA-valued modular forms:
\begin{lem}
    Let $f \in Q(V)$ of weight $k$ be an eigenform for $T_{m}'$ with eigenvalue $\lambda_{m}$ for all $m$. Then $\nabla_{k} (f) = 2\pi i \theta (f) + L(-1)f$ is an eigenform for $T'_{m}$ with eigenvalue $m \lambda_{m}$ for all $m$.
\end{lem}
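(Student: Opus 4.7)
The plan is to reduce the claim to the classical identity that $\theta = q\,d/dq$ sends Hecke eigenforms of weight $k$ in $Q_k$ to Hecke eigenforms of weight $k+2$ in $Q_{k+2}$ with eigenvalue scaled by $m$. Throughout I will treat $\nabla_k f = 2\pi i\,\theta f + L(-1)f$ as written in the statement (noting that this differs from Definition \ref{d:moderiv} by the absence of the $E_2$-correction; as the calculation below shows, the $E_2$-term is exactly what would spoil the Hecke-equivariance, so the formula as written is the natural one). I will denote this operator $\tilde{\nabla}\colon Q_k(V)\to Q_{k+2}(V)$.

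First I would observe that it suffices to establish the operator identity
\[
T'_m\,\tilde{\nabla} \;=\; m\,\tilde{\nabla}\,T'_m
\]
on $Q_k(V)$, since then $T'_m f = \lambda_m f$ immediately yields $T'_m\,\tilde{\nabla} f = m\lambda_m\,\tilde{\nabla} f$. Using the graded isomorphism $Q(V) \cong Q\otimes V^{(2)}$ of Theorem \ref{p:QMVtensor}, I decompose $f$ into a sum of pure tensors $g\otimes v$ with $g\in Q_{2r}$, $v\in V_{\ell}$, $2r+2\ell = k$, and work tensor-by-tensor. From Theorem \ref{thmHecke} the Hecke action is componentwise, namely $T'_m(g\otimes v) = m^{\ell}\,T_m^{(2r)}(g)\otimes v$, where the superscript indicates the weight at which the scalar Hecke operator of \eqref{defTmextend} is applied.

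Next I would split $\tilde{\nabla}(g\otimes v) = 2\pi i\,\theta g \otimes v + g\otimes L(-1)v$ and expand both $T'_m\tilde{\nabla}(g\otimes v)$ and $m\,\tilde{\nabla}\,T'_m(g\otimes v)$ in the tensor basis. Because $\theta$ lives on the $Q$-factor and $L(-1)$ on the $V^{(2)}$-factor, the two pieces decouple: the $L(-1)v$ contributions on both sides are identically $m^{\ell+1}\,T_m^{(2r)}(g)\otimes L(-1)v$ and cancel, while the $\theta g\otimes v$ contributions reduce the desired identity to the scalar commutation
\[
T_m^{(2r+2)}(\theta g) \;=\; m\,\theta\!\left(T_m^{(2r)}(g)\right)
\qquad\text{on }Q_{2r}.
\]
This scalar identity is immediate from \eqref{defTmextend}: writing $g = \sum_n b_n q^n$, the factor $mn/a^2$ produced by $\theta$ combines with $a^{(2r+2)-1}$ to give $a^{2r-1}\cdot mn$, so both sides equal $m\sum_n n\!\left(\sum_{a\mid(m,n)}a^{2r-1}b_{mn/a^2}\right)q^n$.

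The main piece of bookkeeping is the weight at which $T_m$ is applied to each $Q$-component, which shifts by $2$ when one applies $\theta$; this is what accounts for the discrepancy $m$ between the two orders of operators, and it is also why one should \emph{not} include the $E_2$-term of Definition \ref{d:moderiv}. Indeed multiplication by $E_2$ does not intertwine the Hecke operators at different weights with a factor of $m$ (one can already check that $T_2^{(4)}(E_2^2)\neq 2\sigma_1(2)E_2^2$), so the $E_2$ correction would obstruct the commutation relation above; omitting it makes everything work cleanly and yields the claimed eigenvalue $m\lambda_m$.
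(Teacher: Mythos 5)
Your proof is correct and follows essentially the same route as the paper's: both decompose $f$ via $Q(V)\cong Q\otimes V^{(2)}$, use $T'_m=m^{L(0)}T_m$ from Theorem \ref{thmHecke}, reduce the $\theta$-part to the scalar relation $T_m(\theta g)=m\,\theta(T_m g)$ (which you verify from \eqref{defTmextend} exactly as the paper does in the proof of Theorem \ref{t:quasimodulareigenforms}), and observe that the $L(-1)$-part just shifts $m^{L(0)}$ by one power of $m$. Your framing as the operator identity $T'_m\tilde\nabla=m\tilde\nabla T'_m$ is a slightly cleaner packaging (it sidesteps having to argue that each tensor component of $f$ is itself an eigenform), and your side remark that the $E_2$-correction of Definition \ref{d:moderiv} would break the intertwining is accurate and consistent with the statement as given.
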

\begin{proof}
    As in Theorem \ref{t:quasiE2}, let us write
    \begin{align*}
        f = \sum_{i+j=k} \alpha_{i} \beta_{j}
    \end{align*}
    where $\alpha_{i} \in Q_{i}$ and $\beta_{j}$ has $L(0)$-weight $(k-i)/2$. Since $T_{m}(f) = m^{-L(0)} \lambda_{m} f$ this means
    \begin{align*}
        T_{m}(\alpha_{i}) = m^{(i-k)/2} \lambda_{m} \alpha_{i}.
    \end{align*}
    Recall that if $g$ is an eigenform for $T_{m}$ with eigenvalue $\mu_{m}$, then $\theta (g)$ is an eigenform with eigenvalue $m \mu_{m}$. This fact is proven in Theorem \ref{t:quasimodulareigenforms} in the appendix. Using this, we get
    \begin{align*}
        T_{m}'(\theta (f)) = m^{L(0)} \sum_{i+j=k} m^{(i-k)/2 + 1} \lambda_{m} \theta (\alpha_{i} ) \beta_{j} = m \lambda_{m} \theta (f).
    \end{align*}
    Next, we have
    \begin{align*}
        T_{m}'(L(-1)f) = \sum_{i+j=k} m^{(i-k)/2} \lambda_{m} m^{(k-i)/2 + 1} \alpha_{i} L(-1) \beta_{i} = m \lambda_{m} L(-1)f.
    \end{align*}
    Together, the above two displays give us that
    \begin{align*}
        T_{m}'(\theta (f) + \tfrac{1}{2 \pi i} L(-1) f) = m \lambda_{m} \theta (f) + \tfrac{1}{2 \pi i} m \lambda_{m} L(-1)f = m \lambda_{m} ( \theta (f) + \tfrac{1}{2 \pi i} L(-1)f)
    \end{align*}
    hence $T_{m}'(\nabla_{k} (f)) = m \lambda_{m} \nabla_{k} (f)$ which is what we wanted to show.
\end{proof}

%%%%%%%%%%%%%%%%%%%%%%%
\subsection{Hecke eigenstates} 
If $\{f\}$ is a complete set of simultaneous eigenstates for the actions
of $m^{L(0)}T_m$ on $M\otimes V^{(2)}$
then by Theorem \ref{thmcommdiag} $\{Pf\}$ is a complete set of simultaneous eigenstates for the actions of $T'_m$ on $M(V)$.\ We refer to such gadgets as \emph{Hecke eigenstates}.

Based on what we have proved it is easy to write down such a set $\{f\}$.\ The most straightforward case is
$$1\otimes V_k.$$
$m^{L(0)}T_m$ acts as a scalar matrix
with eigenvalue $m^{k-1}\sigma_1(m)$
(cf.\ Remark \ref{rmkTm}).\ The corresponding  Hecke eigenstates in $M(V)$ are, for $v\in V_k$,
$$P(1\otimes v)=\sum_{n \geq 0} \frac{1}{n!}\left( 
\frac{-12}{2\pi i}\right)^n E_2^n L(1)^nv.$$

\medskip
The generic case 
$$M_{2r}\otimes V_{\ell}\ \  (r+\ell=k, r \geq 2)$$
is similar but requires the classical theory of Hecke operators for $M_{2r}$.
Recall that for $2r\geq 4$, $M_{2r}$ contains a unique basis of simultaneous normalized eigenforms for the classical Hecke operators $T_m$.\ Let this basis be denoted by $e^1, \hdots, e^d$ say, and  let the $q$-expansion of $e^i$ be
$$e^i = \sum_{n\geq0} e^i_nq^n.$$
"Normalized" means that $e^i_1=1$.\ Such  a choice is always possible, and we have
$T_m(e^i)=e^i_m e^i$.

\medskip
Choose any basis $u^1, \hdots. u^f$ of $V_{\ell}$.\ Then 
$$ \{ e^i\otimes u^j\}$$
is a complete set of simultaneous eigenstates
of weight $2k$ in $M_{2r}\otimes V_{\ell}$ for the operators $m^{L(0)}T_m$ and 
$$ m^{L(0)}T_m(e^i\otimes u^j) = m^{\ell}e^i_m e^i\otimes u^j.$$

The corresponding Hecke eigenstate in $M_{2k}(V)$ is
$$P(e^i\otimes u^j) = \sum_{n\geq0} \frac{1}{n!}\left(\frac{-2\pi i}{12}\right)^n E_2^ne^i(\tau) L(1)^nu^j.$$

\medskip
From the formal Euler product 
\eqref{Dseries} we can, as usual, deduce the Euler product for the eigenvalues of the above  Hecke eigenstates.\ For example for $P(1\otimes v)$ as above we obtain
$$\sum_{m=1}^{\infty} \frac{m^{k-1}\sigma_1(m)}{m^s}=\prod_p \left(1-\frac{p^{k-1}\sigma_1(p)}{p^s}+\frac{p^{2k-1}}{p^{2s}}\right)^{-1}=\zeta(s-k)\zeta(s-k+1).$$
This is the $L$-function for $E_2(\tau)$ evaluated at $s-k+1$.

\medskip
Similarly, for $P(e^i\otimes u^j)$ we get
$$\sum_{m=1}^{\infty} \frac{m^{\ell}e^i_m}{m^s}=\prod_p \left(1-\frac{p^{\ell}e^i_p}{p^s}+\frac{p^{2k-1}}{p^{2s}}\right)^{-1}.$$
This is just the $L$-function for $e^i(\tau)$ evaluated at $s-\ell$, because we know that
$$\sum_{m=1}^{\infty} \frac{e^i_m}{m^s}=\prod_p \left(1-\frac{e^i_p}{p^s}+\frac{p^{2r-1}}{p^{2s}}\right)^{-1}.$$
Thus, we have reduced the study of Hecke eigensystems on $M(V)$ and $Q(V)$, and their associated $L$-functions, to the classical study of Hecke eigensystems on $M$ and $Q$.

%%%%%%%%%%%%%%%%%%%%%%%%%%%%%
\appendix
\section{Scalar-valued quasi-modular forms}\label{AppA}

For background on the theory of modular forms, we refer the reader to \cite{Zagier123}, \cite{DiamondShurman}.\ We use the notation of 
Section \ref{SNotation} without further comment except that we remind the reader that
$$M:=\CC[E_4, E_6]$$
is the $2\ZZ$-graded algebra of holomorphic modular forms. 

\medskip
We turn to a brief discussion of a quasimodular form, cf.\ \cite{RoyerQuasi}, \cite{Zagier123} for further background.\ A \emph{quasi-modular function} of weight $k$ and depth at most $s$ is a holomorphic map
$f\colon \uhp \to \CC$
 such that 
 \[
   f|_k\gamma(\tau)=\sum_{n=0}^s X^nQ_n(f)
 \]
 for some holomorphic maps $Q_n(f) \colon \uhp \to \CC$ and all $\gamma\in\Gamma$.\ It follows from the definition that $f=Q_0(f)$,
 $f=0$ if $k$ is odd or $k<0$, and $f(\tau\pm 1)=f(\tau)$.\ In particular $f$ has a $q$-expansion.\ As usual, we call $f$ a  holomorphic quasi-modular form if its $q$-expansion at the cusp has no pole (though we are not strict with this nomenclature.)

\medskip
Let $Q_{2k}$ be the space of quasi-modular forms of weight $2k$.\ Then 
$$Q:=\oplus_{k\geq0} Q_{2k}$$
is a $2\ZZ$-graded algebra.\ For example $Q_2$ is spanned by $E_2$ and in fact we have
    \[
    Q = \CC[E_2,E_4,E_6].
    \]

A significant feature of $Q$, fundamental for the present paper, is that $Q$ is closed under the derivative operator
$\theta=qd/dq=(1/2\pi i)d/d\tau$.\ Indeed $\theta$ is a derivation of $Q$ and raises weights by $2$.\ For example
Ramanujan famously proved the identities
\begin{eqnarray}
  &&  \theta (E_{2}) = (E_2^2-E_4)/12,\notag\\
   && \theta (E_{4}) = (E_2E_4-E_6)/3, \label{Ramanujanids}\\
   && \theta (E_{6}) = (E_2E_6-E_4^2)/2. \notag
\end{eqnarray}

It is well-known that the \emph{modular derivative}, a modification of $\theta$
defined by
\begin{align*}
    D f :=\theta f - \frac{k}{12} E_{2} f
    \ \ (f\in M_{k})
\end{align*}
is a derivation of $M$ that raises weights by $2$.

\begin{thm}
        \label{t:quasimodulareigenforms}
        Each space $Q_k$ of quasi-modular forms of weight-$k$ has a basis of eigenforms that can be described as follows: for each $2 < \ell \leq k$, let $\cB_{\ell}$ denote a basis of eigenforms for $M_\ell$, and let $\cB_2 = \{E_2\}$. Then
        \[
        \cE_k \df \bigcup_{0\leq \ell \leq k/2} \theta^{\ell} ( \cB_{k-2\ell} )
        \]
        is a basis of eigenforms for $Q_k$.
    \end{thm}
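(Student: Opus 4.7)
The plan is to establish the theorem in two stages: first, a structural decomposition $Q_k = \bigoplus_\ell \theta^\ell W_{k-2\ell}$, where $W_j = M_j$ for $j \geq 4$, $W_2 = \CC E_2$, and $W_0 = \CC$; second, a commutation relation showing $\theta$ transports Hecke eigenforms to Hecke eigenforms (with a rescaled eigenvalue). The theorem will then follow by counting and linear independence.

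For the decomposition, I would argue by induction on the even weight $k$ that
\[
Q_k = M_k \oplus \theta(Q_{k-2}) \qquad (k\geq 4),
\]
with base cases $Q_0 = \CC$ and $Q_2 = \CC E_2$. The sum is direct because $\theta$ raises weight by $2$ and any $f$ of positive weight with $\theta f = 0$ must be a constant (hence zero). To establish it, I would combine $Q = \CC[E_2,E_4,E_6]$ with the Ramanujan identity $12\,\theta(E_2) = E_2^2 - E_4$ to rewrite each monomial $E_2^j E_4^a E_6^b$ modulo $M$ as a $\theta$-derivative of a quasi-modular form of weight $k-2$, and then match graded dimensions on both sides. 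Iterating gives, for $k\geq 2$ even,
\[
Q_k \;=\; \bigoplus_{\ell=0}^{(k-2)/2} \theta^\ell\bigl(M_{k-2\ell}\bigr)\;\oplus\;\theta^{(k-2)/2}\bigl(\CC E_2\bigr),
\]
which (together with the injectivity of $\theta$ on positive-weight pieces) gives $|\cE_k| = \dim Q_k$ and shows that $\cE_k$ is a linearly independent set of the right size provided each $\cB_{k-2\ell}$ is a basis for its respective summand.

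For the eigenform property, I would prove the identity $T_m\circ\theta = m\,\theta\circ T_m$ as a map $Q_{k-2}\to Q_k$, using the weight-dependent extension of $T_m$ from \eqref{defTmextend}. A one-line computation on $q$-expansions does it: if $f = \sum a_n q^n \in Q_{k-2}$, then $(\theta f)_n = n a_n$, so by \eqref{defTmextend} applied at weight $k$,
\[
(T_m\theta f)_n \;=\; \sum_{a\mid (m,n)} a^{k-1}\,\tfrac{mn}{a^2}\,a_{mn/a^2} \;=\; mn\sum_{a\mid (m,n)} a^{k-3} a_{mn/a^2} \;=\; m\,n\,(T_m f)_n,
\]
which is the $n$th coefficient of $m\,\theta(T_m f)$. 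Consequently, if $T_m f = \lambda_m f$, then $T_m(\theta^\ell f) = m^\ell \lambda_m\, \theta^\ell f$, so the image under $\theta^\ell$ of any simultaneous eigenform remains a simultaneous eigenform. It remains to check that $E_2 \in \cB_2$ is itself an eigenform; this is immediate from its $q$-expansion and the classical divisor identity $\sum_{a\mid (m,n)} a\,\sigma_1(mn/a^2) = \sigma_1(m)\sigma_1(n)$, giving $T_m E_2 = \sigma_1(m) E_2$.

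Combining these two inputs gives the theorem: each set $\theta^\ell(\cB_{k-2\ell})$ consists of simultaneous $T_m$-eigenforms, the decomposition ensures linear independence of the union, and the dimension count forces it to be a basis. I expect the main technical obstacle to be the careful handling of the summand $\theta^{(k-2)/2}(\CC E_2)$ in the decomposition, which must replace the missing $M_2 = 0$ piece; everything else is a routine verification once the Ramanujan identity and the extended-Hecke $q$-expansion formula are in hand.
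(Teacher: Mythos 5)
Your proposal is correct, and the half of it that does the real Hecke-theoretic work — the $q$-expansion computation showing $T_m\circ\theta = m\,\theta\circ T_m$, hence that $\theta^{\ell}$ transports simultaneous eigenforms to simultaneous eigenforms with eigenvalues rescaled by $m^{\ell}$ — is exactly the computation in the paper's proof. Where you genuinely diverge is in how you certify that $\cE_k$ is a basis. The paper argues that $\theta$ is injective on $Q_j$ for $j\geq 2$, that the sets $\theta^{\ell_1}(\cB_{k-2\ell_1})$ and $\theta^{\ell_2}(\cB_{k-2\ell_2})$ are pairwise disjoint, and then counts dimensions via $Q_k=\bigoplus_i E_2^iM_{k-2i}$; note that pairwise disjointness of finite sets of vectors is not by itself equivalent to linear independence of their union, so the paper's independence step really leans on the final dimension count. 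You instead establish the structural decomposition $Q_k=M_k\oplus\theta(Q_{k-2})$ and iterate it to $Q_k=\bigoplus_{\ell}\theta^{\ell}(W_{k-2\ell})$ (the classical Zagier decomposition of quasi-modular forms into derivatives of modular forms plus derivatives of $E_2$); once the summands are in direct sum and $\theta^{\ell}$ is injective on positive-weight pieces, independence of $\cE_k$ is automatic, which is the more robust route. You also explicitly verify that $E_2$ is an eigenform for the extended operators, $T_mE_2=\sigma_1(m)E_2$, via $\sum_{a\mid(m,n)}a\,\sigma_1(mn/a^2)=\sigma_1(m)\sigma_1(n)$ — a point the theorem needs (since $\cB_2=\{E_2\}$) but which the paper's proof never checks. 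Two small repairs to your sketch: your stated reason for directness (that $\ker\theta$ consists of constants) only gives injectivity of $\theta$, not $M_k\cap\theta(Q_{k-2})=0$; the latter follows either from your own dimension count once spanning is known, or from the observation that $\theta$ raises the $E_2$-depth by exactly one on nonconstant quasi-modular forms. And the monomial rewriting needs a downward induction on the $E_2$-degree $j$, using that the coefficient of $E_2^jE_4^aE_6^b$ in $\theta(E_2^{j-1}E_4^aE_6^b)$ is $\tfrac{k-j-1}{12}\neq 0$ for $1\leq j\leq k/2$ and $k\geq 4$. Both are routine, and with them your argument closes.
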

    \begin{proof}
        
        First we show if $f$ is a quasi-modular eigenform of weight $k$, then $\theta (f)$ is a quasi-modular eigenform of weight $k+2$. First, if $f=\sum_{n\geq 0}a_nq^n \in Q_{k}$, then we have the expression $\theta (f)=\sum_{n\geq 0}na_nq^n \in Q_{k+2}$. Now, $T_m(\theta (f))=\sum_{n\geq 0}b_nq^n$, where the coefficients are given by
        \begin{align*}
        b_n=\displaystyle\sum_{\ell|gcd(m,n)}\ell^{(k+2)-1}(mn/\ell^2)a_{mn/\ell^2}=mn\displaystyle\sum_{\ell|gcd(m,n)}\ell^{k-1}a_{mn/\ell^2},
        \end{align*}
        so that
        \begin{align*}
        T_m(\theta (f))=\displaystyle\sum_{n\geq 0}mn(\sum_{\ell|gcd(m,n)}\ell^{k-1}a_{mn/\ell^2})q^n=m\sum_{n\geq 0}n(\sum_{\ell|gcd(m,n)}\ell^{k-1}a_{mn/\ell^2})q^n.
        \end{align*}
        In comparison, we can easily compute the expressions
        \begin{align*}
        T_m(f)&=\displaystyle\sum_{n\geq 0}(\sum_{\ell|gcd(m,n)}\ell^{k-1}a_{mn/\ell^2})q^n \\
        \theta( T_m(f))&=\displaystyle\sum_{n\geq 0}n(\sum_{\ell|gcd(m,n)}\ell^{k-1}a_{mn/\ell^2})q^n
        \end{align*}
        which makes it clear that $T_m(\theta (f))=m\theta(T_m(f))$. Now suppose $f=\sum_{n\geq 0}a_nq^n$ is an eigenform. Then $T_{m}(f) = \lambda_{m}f$ for some $\lambda_{m} \in \CC$ for all $m \geq 1$. From our computations, above, we see that
        \begin{align*}
            T_m(\theta (f))=m\theta(T_m(f))=m\theta(\lambda_mf)=m\lambda_m(\theta (f))
        \end{align*}
        hence $\theta (f)$ is an eigenform with eigenvalues $m\lambda_m$. Now we show that $\cE_k$ is a linearly independent set of vectors in $Q_k$. Suppose $f = \sum_{i \geq 0} c_{n}q^{n} \in Q_{k}$ for some $k \geq 2$ and that $\theta (f) = 0$. Expanding this relation gives $\sum_{i \geq 0} i \cdot c_{i}q^{i} = 0$ which means that $i \cdot c_{i} = 0$ for all $i \geq 0$. Since $f \in Q_{k}$ for $k \geq 2$, we cannot have that $i=0$ for all $i \geq 0$. We must then have that $c_{i} = 0$ for all $i \geq 0$, implying $\theta : Q_{k} \to Q_{k+2}$ is an injective linear map.

        Now for any $k \geq 2$, $\cB_{k} \subset M_{k}$ denotes a basis of eigenforms for $M_{k}$ and is hence a linearly independent set. The injectivity of $\theta$ shows that $\theta ( \cB_{k}) \subset Q_{k+2}$ is also a linearly independent set. Likewise for any $0 \leq \ell \leq k/2$, the set $\theta^{\ell} ( \cB_{k-2 \ell}) \subset Q_{k}$ is also linearly independent since the composition $\theta^{\ell}$ also gives an injective mapping. It remains to show that the union of such sets
        \begin{align*}
            \bigcup_{0 \leq \ell \leq k/2} \theta^{\ell} ( \cB_{k-2 \ell})
        \end{align*}
        is linearly independent inside $Q_{k}$. This is equivalent to showing that for any $0 \leq \ell_{1}, \ell_{2} \leq k/2$ with $\ell_{1} \neq \ell_{2}$, we have 
        \begin{align*}
            \theta^{\ell_{1}}(\cB_{k-2 \ell_{1}}) \cap \theta^{\ell_{2}}(\cB_{k-2 \ell_{2}}) = \emptyset.         
        \end{align*}
        Suppose $f \in \theta^{\ell_{1}}(\cB_{k-2 \ell_{1}}) \cap \theta^{\ell_{2}}(\cB_{k-2 \ell_{2}})$. By definition, this implies that there exists $g_{1} \in \cB_{k-2 \ell_{1}}$ and $g_{2} \in \cB_{k-2 \ell_{2}}$ with $f = \theta^{\ell_{1}}(g_{1}) = \theta^{\ell_{2}}(g_{2})$. Assuming $\ell_{1} < \ell_{2}$ without loss of generality, we have that $g_{1}$ and $\theta^{\ell_{2} - \ell_{1}}(g_{2})$ are both sent to $f$ under the injective map $\theta^{\ell_{1}}$. But $g_{1} \neq \theta^{\ell_{2} - \ell_{1}}(g_{2})$ since $\theta^{\ell_{2} - \ell_{1}}(g_{2})$ is quasi-modular however $g_{1}$ is strictly modular. Since $\theta^{\ell_{1}}$ is injective, we must then have that $\theta^{\ell_{1}}(g_{1}) = \theta^{\ell_{2}}(g_{2}) = 0$ that is, $f=0$. Thus $\cE_k$ is a linearly independent set of vectors in $Q_k$.
            
        It remains to show that the size of $\cE_k$ is equal to $\dim Q_k$. We can express
        \begin{align*}
            Q_{k} = \bigoplus_{i=0}^{k/2} E_{2}^{i}M_{k-2i},
        \end{align*}
        and this immediately shows that
        \begin{align*}
            \dim Q_{k} = \sum_{i=0}^{k/2} \dim M_{k-2i} = \vert \cE_{k} \vert.
        \end{align*}
        Thus $\cE_k$ indeed forms an eigenbasis for $Q_k$.
        \end{proof}
%%%%%%%%%%%%%%%%%%%%%
\section{Vertex algebras}\label{AppB}
There are many variants of the notion of \textit{vertex algebra}.\ Here we will describe just those that we need in this paper.\ They are mainly vertex algebras over $\CC$.\ For a general presentation of vertex algebras over any commutative base ring, see \cite{Mason1}.

\subsubsection{Fields on a linear space}
Fix a $\CC$-linear space $V$, often referred to as (the) \textit{Fock space}.\ Vectors in $V$ are frequently called \textit{states}.\ We consider the linear space $End(V)[[z, z^{-1}]]$ consisting of (doubly infinite)
power series 
$$a(z)=\sum_{n\in \ZZ} a(n)z^{-n-1}$$
where $a(n)$ are arbitrary endomorphisms  of $V$.\ The space of fields on $V$ is the linear subspace of $End(V)[[z, z^{-1}]]$ defined by
$$\mathcal{F}(V):= \{a(z) \mid \forall b\in V\ \mbox{we have}\  a(n)b=0\ \mbox{for}\ n\gg0\}.$$
Thus for a field $a(z)\in \mathcal{F}(V)$ we have
$$ a(z)b :=\sum_n a(n)bz^{-n-1}\in \CC[z^{-1}][[z]].$$

\subsubsection{Creative fields}
We fix once and for all a nonzero state $\mathbf{1}\in V$ called the \textit{vacuum}.\ A field $a(z)\in\mathcal{F}(V)$ is called \textit{creative} if it satisfies
$$a(n)\mathbf{1}=0\ \mbox{for all}\ n\geq 0.$$
Put another way,
$$a(z).\mathbf{1}= a(-1)\mathbf{1}+a(-2)\mathbf{1}z+\hdots \in V[[z]].$$
In this situation we also say that $a(z)$ \textit{creates} the state $a(-1)\mathbf{1}$, that is, the constant term of $a(z).\mathbf{1}$.

\subsubsection{Vertex algebras}
A vertex algebra over $\CC$ is a triple $(V, Y, \mathbf{1})$ consisting of the following ingredients:\\
1.\   $(V, \mathbf{1})$ is a linear space with vacuum state $\mathbf{1}$.\\
2.\  $Y: V\rightarrow \mathcal{F}(V),\ a\mapsto Y(a, z)=\sum_n a(n)z^{-n-1}$ is a linear map such that\\
3.\ Each field $Y(a, z)$ is creative and creates $a\in V$, i.e., $a(-1)\mathbf{1}=a$.\\
4.\ The following structural (Jacobi)  identity holds for all states $a, b, c\in V$ and all integers $r, s, t$:
\begin{eqnarray*}
&&\sum_{i\geq0}\binom{r}{i} ((a(t+i)b)(r+s-i)c= \\
&&\sum_{i\geq0}(-1)^i \binom{t}{i} \left\{ a(r+t-i)b(s+i)c-(-1)^tb(s+t-i)a(r+i)c  \right\} .
\end{eqnarray*}
It follows from these axioms (loc.\ cit.) that $Y(\mathbf{1}, z)=Id_V$, i.e., $\mathbf{1}(n)=\delta_{n, -1}Id_V$.\ 
%%%%%%%%%%%%%%%%%%%
\subsubsection{The canonical derivation}\label{SScanonical}
Let $V= (V, Y, \mathbf{1})$ be a vertex algebra over $\CC$.\ The creativity axiom permits us to define a sequence $(D_0, D_1, D_2, \hdots)$ of endomorphisms $D_n$ of $V$:
$$\sum_{n\geq0} D_n(a)z^n :=Y(a, z)\mathbf{1}$$
so that
$$D_{n}(a)= a(-n-1)\mathbf{1}\ (a\in V).$$
Thus $D_0=Id_V$.\ We usually write $D=D_1$, that is $Da:=a(-2)\mathbf{1}$.\ It can be shown (loc.\ cit.) that $D$ is a \textit{derivation} of $V$ in the sense that
for every $n$ we have
\begin{eqnarray}\label{derdef}
D(a(n)b)= (Da)(n)b+a(n)Db.
\end{eqnarray}
 Then  $D_n\mathbf{1}=\delta_{n, 1}\mathbf{1}$ and $D_n=\frac{1}{n!}D^n$ so that
$$Y(a, z)\mathbf{1}=e^{zD}a.$$
$D$ is called the \textit{canonical derivation} of $V$.\ If we want to identify the derivation then we may also refer to the quadruple $(V, Y,\mathbf{1}, D)$ as a vertex algebra, it being understood that $D$ is the canonical derivation of $V$.

\medskip\noindent
\underline{Example $1$}.\ The following example is well-known, (\textit{loc.\ cit.}, Section 5.2).\ Let $V$ be a commutative, associative $\CC$-algebra with identity $1$ with $D$  \textit{any} derivation of $V$.\ Identify each $a\in V$ with the endomorphism defined by multiplication by $a$.\ Define the vertex operator by
$$Y(a, z)=e^{zD}a,\ \ \mbox{i.e.,}\ a(n)=0\ (n\geq0), a_n=\tfrac{1}{(-n-1)!}D^{-n-1}a\ (n<0).$$
Then $(V, Y, 1, D)$ is a vertex algebra.

\medskip\noindent
\underline{Example $2$}.\ We may take $D=0$ in the above construction.\ What obtains is the easiest example of a vertex algebra, where the vertex operator $Y(a, z)=a$
is independent of $z$ and  $(V, Y, 1, 0)$ is nothing but the $\CC$-algebra $V$.

%%%%%%%%%%%%%%%%%%
\subsubsection{$\ZZ$-graded vertex algebras}\label{SSgr}  A vertex algebra $(V, Y, \mathbf{1}, D)$ over $\CC$ is called $\ZZ$-graded if the Fock space carries a $\ZZ$-grading
$$V = \bigoplus_{k\in \ZZ} V_k$$ into subspaces that satisfy the following conditions\\
1).\ $a\in V_k, b\in V_{\ell}\Rightarrow a(n)b\in V_{k+\ell-n-1}$, \\
2).\ $\dim V_k<\infty$ and $V_k=0$ for $k\ll0$.\\
We say that a state $a\in V_k$ has \textit{conformal degree} $k$.\ (Weight $k$ is more traditional but we reserve this name for modular and quasi-modular forms.)

\medskip
In a $\ZZ$-graded vertex algebra $V$ we have $\mathbf{1}\in V_0$, furthermore $D$ is a raising operator of weight $1$:
\begin{eqnarray*}
D: V_k\rightarrow V_{k+1}.
\end{eqnarray*}
$V$ also admits an \textit{Euler operator} $E$ defined by
$$ E.a = ka\ (a\in V_k).$$
$E$ is only a derivation with respect to the $-1$ product.

\medskip
 Let $End(V)^-$ be the Lie algebra consisting of $End(V)$ equipped with the usual bracket of operators.\  $E$ and $D$ span a $2$-dimensional subalgebra of $End(V)^-$ satisfying
$[E, D]=D.$\ 

\begin{rmk}
There are significant vertex algebras which do \textit{not} satisfy  condition 2).\
For example lattice vertex algebras $V_L$ for an \textit{indefinite} even lattice $L$ \cite{LepowskyLi}.
\end{rmk}
%%%%%%%%%%%%%%%
\subsubsection{QVOAs}\label{SSQVOA} A quasi-vertex operator algebra (QVOA) over $\CC$ is a quadruple $(V, Y, \mathbf{1}, \rho)$ with the following ingredients:\\
1.\ $(V, Y, \mathbf{1}, D)$ is a $\ZZ$-graded vertex algebra with canonical derivation $D$ and Euler operator $E$.\\
2.\ $\rho: \mathfrak{sl}_2\rightarrow End(V)^-$ is a representation of $\mathfrak{sl}_2$
such that 
$$\stwomat{1/2}{0}{0}{-1/2}\mapsto E, 
\stwomat{0}{1}{0}{0}\mapsto D,
\stwomat{0}{0}{-1}{0}\mapsto \delta.$$
and the following structural identities hold for $a\in V_k$:\\
3.(Translation covariance).\ $(Da)(n)= -na(n-1)=[D, a(n)],$\\
4.\   $[\delta, a(n)] = (\delta a)(n)+(2k-n-2)a(n+1)$.

\medskip
 One checks that $\delta: V_k\rightarrow V_{k-1}$ is a lowering operator of weight $1$.\
  Our axioms for a QVOA are essentially those of \cite{FHL}, Section 2.8.\
 In VOA theory it is traditional to replace
 $E, D, \delta$ by $L(0), L(-1), L(1)$ respectively, so that the bracket relations are $[L(m), L(n)]=(m-n)L(m+n)$ for $m, n\in\{0, \pm 1\}$.\ A state $a\in V$ is called \textit{quasiprimary} if it satisfies $L(1)a=0$

\medskip
A QVOA $V$ is of \textit{CFT-type} if it satisfies $V_0=\CC\mathbf{1}$.\ This is often a very convenient assumption, for example it implies \cite{DMshifted} that $V_n=0$ for all $n<0$.

%%%%%%%%%%%%%%%%%%

\subsubsection{VOAs} A vertex operator algebra (VOA) over $\CC$
is a special type of QVOA $(V, Y, \mathbf{1}, \rho)$  with the following additional ingredient:\\
1. \ A distinguished state $\omega\in V$ with vertex operator 
\begin{eqnarray*}
   Y(\omega, z)=: \sum_{n\in\ZZ} L(n)z^{-n-2} 
\end{eqnarray*}
such that the modes of $\omega$ satisfy the Virasoro relations
\begin{eqnarray}\label{Virreln}
  [L(m, L(n)]=(m-n)L(m+n)+\frac{m^3-m}{12}cId_V 
\end{eqnarray} 
for some $c\in\CC$ (the \textit{central charge} of $V$).\\
2.\ The operators $L(n)$ and $Id_V$ span a Lie subalgebra $Vir\subseteq End(V)^-$(a Virasoro algebra of central charge $c$; this follows from (\ref{Virreln})) and $\rho$ is the restriction of $Vir$ to the 
$\mathfrak{sl}_2$-subalgebra spanned by $L(0), L(\pm1)$.\\
This VOA is usually denoted $(V, Y, \mathbf{1}, \omega)$ and $\omega$ is often called the \textit{conformal} or \textit{Virasoro} vector.\ We have $L(-2)\mathbf{1}=\omega, L(0)\omega=2\omega, L(1)\omega=0$.

%%%%%%%%%%%%%%%%%%
\subsubsection{Some categories}\label{SScat} It is evident that these vertex-type objects  define concrete categories, in particular we have the category $\mathbf{QV}$ whose objects are
QVOAs and morphisms $f:(U, Y, \mathbf{1}, \rho)\rightarrow (V, Y', \mathbf{1}, \rho')$ are vacuum-preserving linear maps $U\stackrel{f}{\longrightarrow} V$ that preserve
all products ($f(a(n)b)=f(a)(n)f(b)$) and
intertwine the $\mathfrak{sl}_2$-representations ($f\rho(g)=\rho'(g)f,\ g\in\mathfrak{sl}_2$).\ 

\medskip
Similarly, the category $\mathbf{V}$ of VOAs has VOAs as objects with morphisms $f:(U, Y, \mathbf{1}, \omega)\rightarrow (V, Y', \mathbf{1}, \omega')$ being linear maps that preserve vacuum states and conformal vectors and preserve all products as in  $\mathbf{QV}$.

\medskip
Every VOA is a QVOA and there is a forgetful functor
$\mathbf{V}\longrightarrow \mathbf{QV}$
where we ignore the presence of the conformal vector.

\medskip
We can define tensor products of QVOAs (coproduct in $\mathbf{QV}$) 
$$(U, Y, \mathbf{1}, \rho)\otimes(V, Y', \mathbf{1}', \rho'):= (U\otimes V, Y\otimes Y', \mathbf{1}\otimes\mathbf{1}', \rho\otimes \rho').$$
The tensor product  vertex operator is
$$(Y\otimes Y')(a\otimes b, z)=Y(a, z)\otimes Y'(b, z),$$
which means that
$$(a\otimes b)(n)=\sum_{i+j+1=n} a(i)\otimes b(j)$$
(cf. \cite{Mason1}, Subsection 6.5.) and
$\rho\otimes \rho'$ is the tensor product representation of $\mathfrak{sl}_2$ on $U\otimes V$.\ Thus
$$g\mapsto \rho(g)\otimes Id+Id\otimes \rho'(g).$$ 
In particular $(\rho\otimes \rho')(L(0))=L(0\otimes Id+Id\otimes L(0)$ is the Euler operator for $U\otimes V$ and
$U\otimes V$ gets the tensor product grading:
$$(U\otimes V)_n= \sum_{k+\ell=n} U_k \otimes V_{\ell},$$

One checks that the canonical derivation of $U\otimes V$
coincides with the action of $L(-1)$ on $U\otimes V$.
%%%%%%%%%%%%%%%%%%%%
\subsubsection{Doubling}\label{SSdouble} Suppose that 
$(V, Y, \mathbf{1})$ is a $\ZZ$-graded vertex algebra with $V=\oplus_k V_k$.\ The \textit{doubled} space is
$(V^{(2)}, Y, \mathbf{1})$ where
$$V^{(2)}= \oplus_k V^{(2)}_{2k},\ V_{2k}^{(2)}:=V_k.$$

$(V^{(2)}, Y, \mathbf{1})$ is a vertex algebra, however it may no longer be 
$\ZZ$-graded because axiom 1) in Subsection \ref{SSgr} may not be satisfied.
%%%%%%%%%%%%%%%%%%%%%%
\subsubsection{Rank $1$ Heisenberg VOA}\label{AppS} (cf.\  \cite{Kac}, \cite{LepowskyLi}).\  The complex Heisenberg Lie algebra has a basis
$\{h(n), 0\neq n\in\ZZ\}\cup{\{k\}}$ satisfying the canonical commutator relations
\begin{eqnarray}\label{CCRs}
    [h(m), h(n)]= m\delta_{m, -n}k, \ [h(n), k]=0.
\end{eqnarray}

There is a VOA $(S, Y, \mathbf{1}, \omega)$ for which the Fock space $S$ is a \textit{Verma module} induced from the $1$-dimensional space $\CC\mathbf{1}$ satisfying $h(n).\mathbf{1}=0(n>0), k.\mathbf{1}=\mathbf{1}$.\ $S$ has a basis consisting of states
$$h(-n_1)h(-n_2)\hdots h(-n_k).\mathbf{1} \ (n_1\geq\hdots\geq n_k\geq1).$$
$V_m$ is spanned by those states for which
$\{n_1, \hdots, n_k\} \vdash m$.\ In particular
$S$ is of CFT-type, $S_1$ is spanned by $h:=h(-1)\mathbf{1}$ (the canonical weight $1$ state) and its vertex operator is
$$Y(h, z):= \sum_{n\in\ZZ} h(n)z^{-n-1}$$
where the action of the mode $h(n)$ on $S$ is determined by (\ref{CCRs}).\ The Virasoro vector is
$\omega:=\tfrac{1}{2}h(-1)^2.\mathbf{1}$  and the central charge (or \textit{rank}) is $c=1$;\ $h$ is a quasiprimary state.

\medskip
A common approach to this VOA is to set $x_{-n}=h(-n).\mathbf{1}\ (n>0)$ and identify $V$ with the symmetric algebra $\CC[x_{-1}, x_{-2}, \hdots]$ (whence the appellation $S$).\ Then for
$n>0$ the operator $h(-n)$ is multiplication by $x_{-n}$ and $h(n)=n\partial/\partial x_{-n}$.

%%%%%%%%%%%%%%%%%%%%%
\bibliography{refs}
\bibliographystyle{plain}
\end{document}